\newtheorem{theorem}{Theorem}[section]
\newtheorem{corollary}[theorem]{Corollary}
\newtheorem{definition}[theorem]{Definition}
\newtheorem{lemma}[theorem]{Lemma}
\newtheorem{proposition}[theorem]{Proposition}
\newtheorem{remark}[theorem]{Remark}
\newenvironment{proof}[1][Proof]{\textbf{#1.} }{\ \rule{0.5em}{0.5em}}
\newcommand{\refeqn}[1]{(\ref{#1})}
\newcommand{\virgolette}[1]{``#1''}
\newcommand{\cinf}[0]{C^{\infty}}
\newcommand{\matr}[0]{\operatorname{Mat}}
\newcommand{\spann}[0]{\operatorname{span}}
\newcommand{\Iso}[0]{\operatorname{Iso}}
\newcommand{\acc}[1]{\`{#1}}
\newcommand{\argg}[0]{\operatorname{arg}}
\begin{document}

\title{{\bf Weak symmetries of stochastic differential equations driven by semimartingales with jumps}}
\author{Sergio Albeverio\thanks{Institut f\"ur Angewandte Mathematik, Rheinische Friedrich-Wilhelms-Universit\"at Bonn, Endenicher Allee 60, Bonn, Germany, \emph{email: albeverio@uni-bonn.de}}, Francesco C. De Vecchi\thanks{Institut f\"ur Angewandte Mathematik, Rheinische Friedrich-Wilhelms-Universit\"at Bonn, Endenicher Allee 60, Bonn, Germany, \emph{email: francesco.devecchi@uni-bonn.de}}, Paola Morando\thanks{DISAA, Universit\`a degli Studi di Milano, via Celoria 2, Milano, Italy, \emph{email: paola.morando@unimi.it}} and Stefania Ugolini\thanks{Dipartimento di Matematica, Universit\`a degli Studi di Milano, via Saldini 50, Milano, Italy, \emph{email: stefania.ugolini@unimi.it}}}
\date{}
 \maketitle

\begin{abstract}
Stochastic symmetries and related invariance properties of finite dimensional SDEs driven by general  c\acc{a}dl\acc{a}g  semimartingales taking values in Lie groups are defined
and investigated.  The considered set of SDEs, first introduced by S. Cohen,
includes affine and Marcus type SDEs as well as smooth SDEs driven by  L\'evy processes and iterated random maps. A natural extension to this general setting of reduction and reconstruction theory for symmetric SDEs is provided. Our theorems imply as special cases non trivial invariance results concerning a class of affine iterated random maps as well as symmetries for numerical schemes (of Euler and Milstein type) for Brownian motion driven SDEs.
\end{abstract}

\noindent\textbf{MSC numbers}: 60H10; 60G45; 58D19

\noindent\textbf{Keywords}: Lie symmetry analysis, stochastic differential equations, semimartingales with jumps, stochastic processes on manifolds

\section{Introduction}

The study of symmetries and invariance properties of ordinary and partial differential equations (ODEs and PDEs, respectively) is a classical and
well-developed
research field (see \cite{Bluman1989,Gaeta1994,Olver1993,Stephani1989})
and provides a powerful tool for both computing some explicit solutions to the equations and analyzing their qualitative behavior.\\
The study of invariance properties of finite or infinite dimensional stochastic differential equations (SDEs)
is, in comparison, less developed and a systematic study could be fruitful from both the practical and the theoretical point of view.\\
The knowledge of some closed formulas is important in many applications of stochastic processes since it  permits to
develop faster and cheaper numerical algorithms for the simulation of the process or to evaluate interesting quantities related to it. Moreover,
the use of closed formulas
 allows the application of simpler statistical methods for the calibration of models (this is the reason for the popularity of affine
models in mathematical finance, see e.g. \cite{Cuchiero2011,Filipovic2003}, or of the Kalman filter and its generalizations in the theory of
stochastic filtering, see e.g. \cite{Bain2009}). The presence of symmetries and invariance properties is
 a strong clue for the possibility of closed formulas (see, for example, \cite{Craddock2009,Craddock2007,Craddock2012},
 where classical infinitesimal symmetry techniques are used for finding fundamental solutions of some diffusion processes applied in mathematical
 finance, or \cite{DeLara1997(1),DeLara1997(2),DM2016}, where geometrical methods based on Lie algebras are used to find new finite
 dimensional stochastic filters).\\
The investigation of invariance properties is relevant also from a theoretical point of view, in particular when stochastic processes are discussed in a
geometrical framework. Some interesting examples of this approach are the study of  L\'evy  processes on Lie groups \cite{Albeverio2007,Liao2004},
the geometric description of
stochastic filtering (see \cite{Elworthy2010}, where invariant diffusions on fiber bundles are discussed), and the study of variational
stochastic systems \cite{Cruzeiro2016,Holm2015,Zambrini2015}.\\

In this paper we apply Sophus Lie original ideas to the study of stochastic symmetries of a finite dimensional SDE driven by
general c\acc{a}dl\acc{a}g semimartingales taking values in  Lie groups. In particular, we introduce a group of transformations which change both
the processes solving the considered SDE and its driving noise, and  transform correspondingly the coefficients of the SDE. Therefore,  we look for the subgroup of all transformations which leave
invariant the set of solutions of a given SDE.\\
 In order to clarify the novelty of our study we describe, without claiming to be exhaustive, some
previous results on the same problem.
There are essentially two natural approaches to the description of the symmetries of a SDE. The first one, applied when the solution processes are
Markovian semimartingales, consists in studying the invariance properties of the generator of the SDE solutions (which is an analytical object).
This approach, used by Glover et al. \cite{Glover1991,Glover1998,Glover1990}, Cohen de Lara \cite{DeLara1991,DeLara1995} and Liao
\cite{Liao1992,Liao2009}, deals with a large group of transformations involving both a general spatial transformation and a
solution-dependent stochastic time change.\\
The second approach, mainly applied to Brownian-motion-driven SDEs, consists in restricting the attention to a suitable set of transformations and directly applying
a natural notion of symmetry, closely inspired by the ODEs case (see Gaeta et al.
\cite{Gaeta2000,Gaeta1999}, Unal \cite{Unal2004}, Srihirun, Meleshko and Schulz
\cite{Srihirun2006}, Fredericks and Mahomed \cite{Fredericks2007}, Kozlov \cite{Kozlov2010(1),Kozlov2010(2)} for SDEs driven by Brownian motion (see also \cite{Gaeta2017} for a review on this subject) and L\'azaro-Cam\'i and Ortega \cite{Cami2009} for SDEs driven by general continuous semimartingales). \\
Both approaches have their strengths and weakness as for example, the first method  permits to treat a very general family of processes (all
Markovian processes on a metric space) and a large class of transformations with interesting applications  (see
\cite{Glover1990,Liao2009}), but the explicit calculation of the symmetries is quite difficult in the non-diffusive case.
Conversely, the second approach
 allows us to face the non-Markovian setting (see \cite{Cami2009}) and  permits easy explicit calculations. In particular, in this framework, it
 is possible   to
get the \emph{determining equations}, that are a set of first order PDEs which uniquely characterizes the  symmetries of the SDE. As the first approach,
also the second one has interesting
applications (see, e.g., \cite{Cami2009,DMU2}), even though,  until now, it has been confined to the case of continuous semimartingales
and usually considers a  family of transformations which is smaller than the family considered by the first method.\\

In this paper, aiming at  reducing the gap between the two approaches we just mentioned, we propose a possible foundation of the concept of symmetry for general SDEs
and we extend the methods introduced in \cite{DMU1} where, despite working in the setting of the second approach, we introduced a large family of transformations which allowed us to obtain all the symmetries of the first setting for Brownian-motion-driven SDEs.\\
Let us recall that in \cite{DMU1} we  considered a SDE as a pair $(\mu(x),\sigma(x))$, where $\mu$ is the drift and $\sigma$ is the diffusion coefficient
defined on a manifold $M$ and we called solution to the SDE $(\mu,\sigma)$ a pair $(X,W)$, where $X$ is a semimartingale on $M$ and $W$ is an
 $n$-dimensional
 Brownian
motion. A stochastic transformation, according to \cite{DMU1}, is a triple  $T=(\Phi,B,\eta)$, where $\Phi$ is a diffeomorphism of $M$, $B$ is a $X_t$-dependent rotation and
$\eta$ is a $X_t$-dependent density of a stochastic time change. The transformation $T$ induces an action $E_T$ on the SDE $(\mu,\sigma)$ and an action
$P_T$ on the process $(X,W)$. The operator $P_T$ acts on the process $(X,W)$ changing the semimartingale
$X$ by the diffeomorphism $\Phi$ and the time change $\int_0^t{\eta dt}$, and on the Brownian motion $W$ by the rotation $B$ and the
same time change. Since the Brownian motion is invariant with respect to both rotations and time rescaling, the process $P_T(X,W)$ is composed by
a semimartingale on $M$ and a new $n$-dimensional Brownian motion. The action $E_T$ of the stochastic transformation on $(\mu,\sigma)$ is the
unique way of changing the SDE so that, if $(X,W)$ is a solution to $(\mu,\sigma)$, then $P_T(X,W)$ is a solution to $E_T(\mu,\sigma)$. \\
In this framework a symmetry is defined as a transformation $T$ which leaves the SDE $(\mu,\sigma)$ invariant. These transformations are the only ones
which preserve the set of solutions to the SDE $(\mu,\sigma)$. Since all actions $P_T$ and $E_T$ are explicitly determined in terms of $T=(\Phi,B, \eta)$,
it is possible to write the  determining equations satisfied by $T$ which can be solved explicitly with a
 computer algebra software (see \cite{DMU2}).\\
The main aim of the present paper is to generalize this approach  from Brownian-motion-driven SDEs to SDEs driven by general c\acc{a}dl\acc{a}g semimartingales taking values in (finite dimensional)  Lie
groups. There are two main differences with respect to the Brownian motion setting. The first one is the lack of a natural geometric transformation rule for processes with jumps replacing the It\^o transformation rule for continuous processes. This fact makes the action of a
diffeomorphism $\Phi$ on a SDE with such general driving noise more difficult to be described. The second one is the fact that a general semimartingale does not have the
symmetry properties of Brownian motion in the sense that we cannot \virgolette{rotate} it or make general time changes.\\

In order to address the first problem we restrict ourselves to a particular family of SDEs (that we call \emph{geometrical SDEs}) introduced by Cohen
in \cite{Cohen1996,Cohen1996(2)} (see also \cite{Applebaum1997,Cohen1995}). In particular, we consider SDEs defined by a map
$\Psi:M \times N \rightarrow M$, where $M$ is the (finite dimensional) manifold where the solution lives and $N$ is the (finite dimensional) Lie group where the driving process takes values. This
definition simplifies the description of the transformations of the solutions $(X,Z) \in M \times N$. In fact, if $(X,Z)$ is a solution to the SDE
$\Psi(x,z)$  then, for any diffeomorphism $\Phi$, $(\Phi(X),Z)$ is a solution to the SDE $\Phi(\Psi(\Phi^{-1}(x),z)$ (see Theorem
\ref{theorem_geometrical1} and Theorem \ref{theorem_gauge1}). We remark that the family of geometrical SDEs is not too restrictive: in fact it includes  affine
types SDEs, Marcus type SDEs, smooth SDEs driven by  L\'evy processes and even a class of iterated random maps (see Subsection \ref{subsection_comparison}
for further details).\\
The second problem is addressed by  using  two new notions of invariance of a semimartingale defined on a Lie group introduced in \cite{Annals}. These two notions are
extensions of predictable transformations which preserve the law of $n$ dimensional Brownian motion and $\alpha$-stable processes studied for
example in \cite[Chapter 4]{Kallenberg2005}. The first notion,  called \emph{gauge symmetry}, is a natural extension to semimartingales on Lie groups of the rotation invariance of
Brownian motion, while the second one, called \emph{time symmetry}, is a corresponding extension of the time rescaling invariance of Brownian motion.
The concept of gauge symmetry group is based on the action $\Xi_g$ of a Lie group $\mathcal{G}$ ($g$ is an element of $\mathcal{G}$) on the Lie
group $N$ which preserves the identity $1_N$ of $N$. A semimartingale $Z$ admits $\mathcal{G}$ as gauge symmetry group if, for any locally
bounded predictable process $G_t$ taking values in $\mathcal{G}$, the process $\tilde{Z}$, defined by the transformation $d\tilde{Z}=\Xi_{G_t}(dZ)$
has the same probability law as $Z$ (see Section
\ref{section_gauge}). A similar definition is given for the time symmetry, where $\Xi_g$ is replaced by an $\mathbb{R}_+$ action $\Gamma_r$ and the
process $G_t$ is replaced by an absolutely continuous time change $\beta_t$ (see Section \ref{section_time}).\\
Given an SDE $\Psi$ and a driving process $Z$ with gauge symmetry group $\Xi_g$ and time symmetry $\Gamma_r$, we are able to define a
stochastic transformation $T=(\Phi,B,\eta)$, where $\Phi$, $\eta$ are a diffeomorphism and a density of a time change as in the Brownian
setting, while $B$ is a function taking values in $\mathcal{G}$ (in the Brownian setting $\mathcal{G}$ is the group of rotation in
$\mathbb{R}^n$). In order to generalize \cite{DMU1}, using the properties of geometrical SDEs and of gauge and time symmetries, we define an action $E_T$ of $T$ on
the SDE $\Psi$ as well as an action $P_T$  of $T$ on the solutions $(X,Z)$.\\
In this setting we propose an algorithm for exploiting general symmetries in order to reduce a geometrical SDE admitting a solvable symmetry algebra to a more simple SDE and we provide
a reconstruction method to obtain solutions to the original SDE starting from the knowledge of the solutions to the reduced one.

\bigskip
Let us stress
two main  novelties of our approach. The first novelty is that, for the first time, the notion of symmetry of a SDE driven by 
general c\acc{a}dl\acc{a}g, in principle non-Markovian, semimartingales is introduced and studied in full detail. The analysis is based on the introduction of
a group of transformations which permits both the space transformation $\Phi$ and the gauge and time transformations $\Xi_g,\Gamma_r$. In this
way our approach extends the results of \cite{Cami2009}, where only general continuous semimartingales $Z$ and space transformations $\Phi$ are
considered. We also generalize the results to the case of  Markovian processes  on  $\mathbb{R}^m$ with  regular generators. Indeed, due to the
introduction of gauge and time symmetries, we recover most of the smooth symmetries of a Markovian process which would be lost if
we had just considered  the space transformation $\Phi$.\\

The second novelty of the paper is given by our explicit approach: indeed, we provide many results which permit to check explicitly whether a
semimartingale admits  given gauge and time symmetries and to compute stochastic transformations which are symmetries of a given SDE.
We obtain
the determining equations \refeqn{equation_determining2} which are satisfied by any infinitesimal symmetry, under some mild additional hypotheses on the jumps of the driving
process $Z$. The possibility of providing explicit determining equations is the main reason to restrict our
attention to geometrical SDEs instead of considering  more general classes of SDEs. Indeed, an interesting consequence of our study is that we provide a black-box method, applicable in several different situations,  which permits to explicitly compute symmetries of a given SDE or to construct
all the geometrical SDEs  admitting a given symmetry.
For these reasons, in order to show the generality and the
 user-friendliness of our theory, we conclude the paper with two applications.\\

The first one (see Section \ref{subsection_example}) is the study of symmetries of the general affine SDEs in two dimensions (see Section \ref{subsection_affine} for the definition of affine SDEs). This kind of model has many applications in the theory of iterated random maps (see, e.g., \cite{Arnold1998,Babillot1997,Kesten1973} and in particular \cite{Brockwell1991,Shumway2006} for the well known ARMA model) and it is closely connected with affine processes (see \cite{Filipovic2003} for the general definition of affine processes). \\
The second application (see Section  \ref{section_weak_numerical}) is the introduction of the general concept of weak symmetry for numerical schemes of SDEs and, in particular, for the Euler and Milstein numerical schemes for Brownian motion driven SDEs. This study is a generalization of \cite{DeVecchi2017}, where the concept of strong symmetries of Euler and Milstein numerical schemes was introduced. Section \ref{section_weak_numerical}, as well as \cite{DeVecchi2017}, are parts of a more general research line, aiming at exploiting Lie symmetries of ODEs and PDEs in order to obtain better numerical integrators
(see e.g. \cite{Winternitz2016,Dorodnitsyn2011,Levi2006,Levi2011} and references therein).\\

The paper is organized as follows. In Section \ref{section_geometrical} we introduce the notion of geometrical SDE and we discuss
their transformation properties under diffeomorphisms. In Section \ref{section_gauge} and in Section \ref{section_time} we
recall the definition of gauge and time symmetries with their main properties. In Section \ref{section_symmetry} we
extend the study of the symmetries of Brownian-motion-driven SDEs to SDEs driven by general c\acc{a}dl\acc{a}g semimartingales. Section \ref{subsection_example} is devoted to the detailed application of our symmetries analysis to a relevant example. In Section \ref{section_weak_numerical} we show that the general results of the paper can be successfully applied to the most common numerical approximations for Brownian motion driven SDEs.

\section{Stochastic differential equations with jumps}\label{section_geometrical}

In this section we introduce some preliminary material which will be fundamental in the rest of the paper.   Einstein's summation convention for repeated indices is used throughout the paper.

\subsection{Geometrical SDEs with jumps}

\begin{definition}
An adapted c\acc{a}dl\acc{a}g stochastic process $X$ on a subset $M$ of  $\mathbb{R}^m$ is a semimartingale if each component  can be decomposed as the sum of a local martingale and a  c\acc{a}dl\acc{a}g  adapted process whose sample paths are locally of bounded variation.
\end{definition}

Semimartingales are the largest class of processes for which an It\^o integral can be defined.
The importance of this class relies  on the fact that it is closed with respect to localization, change of time, absolutely continuous change of measure and optional stopping.
Therefore, working with this class of processes allows the use of the whole standard machinery of the stochastic calculus.
In particular, we recall the It\^o Lemma for (non continuous) semimartingales.

\begin{lemma}\label{lemma_Ito} \label{equation_Ito_formula}
If $X$ is a semimartingale on $M$ and $f$ is a twice continuously differentiable real-valued function on $M$, then $f(X)$ is a semimartingale and
\begin{eqnarray}
&f(X_t)=f(X_0) +\int_{0}^{t} \partial_{x^i} f(X_{s^-})dX_s^i+\frac{1}{2}\int_{0}^{t} \partial_{x^i,x^j} f(X_{s^-})d[X^i,X^j]^c_s\\
&+\sum_{0 \leq s \leq t}\{ (\Delta f(X_s))- \partial_{x^i} f(X_{s^-}) \Delta X_s^i \},&
\end{eqnarray}
where $\Delta X_s=X_s-X_{s^-}$ denotes the jump at time $s$ and $[X^i,X^j]^c$ is the continuous part of the quadratic variation defined by
$$[X]_t=[X]^c_t+X^2_0+\sum_{0 \leq s\leq t} (\Delta X_s)^2,$$
where $[X]_t:=[X,X]_t$.
\end{lemma}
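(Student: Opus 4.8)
The plan is to reduce to the polynomial case via the integration by parts formula for semimartingales, and then to pass to general $C^2$ functions by approximation, with a localization argument controlling all the error terms.

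First I would localize. Choose a sequence of stopping times $\tau_k \uparrow \infty$ such that each stopped process $X^{\tau_k}$, together with its left limits, takes values in a compact set $K_k \subset M$ and such that the finite-variation parts and the quadratic variations $[X^i,X^i]$ of $X^{\tau_k}$ are bounded. Since every term in the asserted identity is stable under optional stopping, it suffices to prove the formula for each $X^{\tau_k}$; hence I may assume that $X$ and $X_{-}$ live in a fixed compact $K$ and that $f$, after modification outside a neighbourhood of $K$, is $C^2$ with bounded derivatives of order $\le 2$. Next, the polynomial case: recall the integration by parts formula $Y_t Z_t = Y_0 Z_0 + \int_0^t Y_{s^-}\,dZ_s + \int_0^t Z_{s^-}\,dY_s + [Y,Z]_t$ for semimartingales $Y,Z$, with $[Y,Z]_t = [Y,Z]^c_t + \sum_{0\le s\le t}\Delta Y_s\,\Delta Z_s$ (the $Y_0 Z_0$ contribution already separated). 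Starting from $f(x)=x^i$, where the identity is trivial, I would prove it for every monomial $x^{i_1}\cdots x^{i_k}$ by induction on $k$, writing $f(X) = g(X)\,X^{i_k}$ with $g(x)=x^{i_1}\cdots x^{i_{k-1}}$ and applying integration by parts: the continuous bracket term produces the $\tfrac12\,\partial_{x^ix^j}f$ contribution after collecting symmetric terms, while the sum $\sum_s \Delta g(X_s)\,\Delta X_s^{i_k}$ combined with the inductively obtained jump sum for $g$ reorganizes, via the discrete product rule $\Delta(g(X)X^{i_k}) = g(X_{-})\Delta X^{i_k} + X^{i_k}_{-}\Delta g(X) + \Delta g(X)\,\Delta X^{i_k}$, exactly into $\sum_{0\le s\le t}\{\Delta f(X_s)-\partial_{x^i}f(X_{s^-})\,\Delta X_s^i\}$. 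By linearity the formula holds for all polynomials.

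Then I would pass to $C^2$ functions by approximation. On the compact $K$ pick polynomials $p_n$ with $p_n\to f$, $\partial_{x^i}p_n\to\partial_{x^i}f$ and $\partial_{x^ix^j}p_n\to\partial_{x^ix^j}f$ uniformly on $K$ (e.g.\ Bernstein-type polynomials, which converge together with their derivatives). Each term of the identity written for $p_n$ converges in probability to the corresponding term for $f$: the Lebesgue--Stieltjes integral against the bounded-variation part and the integral against $[X^i,X^j]^c$ converge by dominated convergence; the stochastic integral $\int_0^t \partial_{x^i}p_n(X_{s^-})\,dX_s^i$ converges by the dominated convergence theorem for stochastic integrals, using the uniform bound on $\partial p_n$ over $K$ and the boundedness secured by localization; and the jump sum converges because a second-order Taylor estimate on $K$ gives $|\Delta p_n(X_s)-\partial_{x^i}p_n(X_{s^-})\,\Delta X_s^i|\le C\,|\Delta X_s|^2$ with $C$ independent of $n$, so the summands are dominated by the summable sequence $C\,|\Delta X_s|^2$, whose sum is at most $C\,[X]_t<\infty$ a.s.

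The main obstacle is the handling of the jump sum: one must first verify that $\sum_{0\le s\le t}\{\Delta f(X_s)-\partial_{x^i}f(X_{s^-})\,\Delta X_s^i\}$ is absolutely convergent a.s.\ --- which is again the Taylor bound $|\cdot|\le C\,|\Delta X_s|^2$ together with $\sum_{0\le s\le t}(\Delta X_s)^2\le[X]_t$ from the definition recalled in the statement --- and then that this sum behaves correctly under both limiting procedures (the inductive reorganization in the polynomial step and the $n\to\infty$ passage). Everything else is bookkeeping with the integration by parts formula and standard dominated-convergence arguments for stochastic integrals, made legitimate by the initial localization.
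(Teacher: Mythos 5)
Your argument is essentially correct, but it is worth pointing out that the paper does not prove this lemma at all: it simply cites \cite{Protter1990}, Chapter II, Section 7, where the change-of-variables formula is obtained by a different classical argument, namely Taylor's theorem applied to the increments of $X$ along refining partitions of $[0,t]$, with the (path-by-path finitely many) large jumps removed and treated separately. Your route --- localization, the formula for monomials by induction via integration by parts $Y_tZ_t=Y_0Z_0+\int_0^tY_{s^-}dZ_s+\int_0^tZ_{s^-}dY_s+[Y,Z]_t$, and then polynomial approximation in $C^2$ norm on a compact set, with dominated convergence for the stochastic integral and the domination $|\Delta f(X_s)-\partial_{x^i}f(X_{s^-})\Delta X^i_s|\le C|\Delta X_s|^2\le C\,\Delta[X]_s$ for the jump sum --- is the other standard proof (essentially that of \cite{Jacod2003}, Theorem I.4.57). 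It trades the combinatorics of partition sums for the algebra of the product rule; what it buys is that the only analytic input is the second-order Taylor bound you isolate, while Protter's argument avoids the polynomial approximation step but needs a careful limit along partitions.

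Two points in your write-up need repair, though neither is fatal. First, the localization is not attainable as stated: with $\tau_k=\inf\{t:|X_t|\ge k\}$ the stopped process $X^{\tau_k}$ need not take values in a compact set, because the jump at $\tau_k$ is unbounded (the same remark applies to stopping the finite-variation part and $[X]$). The standard fix is to prove the formula for $X$ stopped strictly before $\tau_k$ (i.e. for $X^{\tau_k-}$, which is bounded by $k$) and then check directly that reinstating the single jump at $\tau_k$ changes the two sides of the identity by the same amount: the left side by $\Delta f(X_{\tau_k})$, the right side by $\partial_{x^i}f(X_{\tau_k^-})\Delta X^i_{\tau_k}$ from the stochastic integral plus the compensating term in the jump sum. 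Second, for the uniform Taylor estimate on the $p_n$ you should take the compact set convex (a closed ball containing the range of the stopped process, after extending $f$ by a cutoff to all of $\mathbb{R}^m$) and perform the $C^2$ approximation there, so that the segment joining $X_{s^-}$ and $X_s$ stays in the region where the second derivatives of the $p_n$ are uniformly controlled. With these adjustments your proof is complete and is a legitimate self-contained alternative to the citation given in the paper.
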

\begin{proof}
The proof can be found in \cite{Protter1990} Chapter II, Section 7.
$\hfill$\end{proof}\\

Hereafter, denoting  by $N$ a matrix Lie group and fixing a linear representation of $N$, we denote  by $z^1, \ldots z^n$ its standard set of matrix elements.
For a semimartingale $Z$ on $N$,
a natural definition of jump process can be given.
Indeed, if $\tau$ is a stopping time, we define the jump at time
$\tau$ as the random variable $\Delta Z_{\tau}$ taking values on
$N$ such that
$$\Delta Z_{\tau}=Z_{\tau} \cdot (Z_{\tau_-})^{-1},$$
where $\cdot$ is the multiplication in the group $N$. In order to define a special class of equations that, in some sense,
depends only on the
jumps $\Delta Z_t$ of a process $Z$ defined on N, we consider  a topological space  $\mathcal{K}$ and we introduce a function $\Psi$ of the form
$$\Psi_{\cdot}(\cdot,\cdot):M \times N \times \mathcal{K} \rightarrow M,$$
such that $\Psi_k(x,1_N)=x$ for any $k\in \mathcal{K}$ and $x \in M$, ${\Psi}_k$ is smooth in the $M, N$ variables and ${\Psi}_k$ and all its derivatives with respect to the $M,N$ variables are continuous in all their arguments. In order to provide a general concept of stochastic differential equation (SDE) with jumps defined on $M$ and driven by a general c\acc{a}dl\acc{a}g
semimartingale  on  $N$  we introduce an auxiliary function $\overline{\Psi}_k$
\begin{equation}
\overline{\Psi}_.(.,.,.): M \times N \times N \times \mathcal{K}\rightarrow M
\end{equation}
defined as
\begin{equation}\label{equation_manifold0}
\overline{\Psi}_k(x,z',z)=\Psi_k(x,z'\cdot z^{-1})={\Psi}_k(x,\Delta z).
\end{equation}

If $K$ is a predictable process such that there exist an increasing sequence of stopping times $\tau_n\to +\infty$ and an increasing sequence of compact sets $\mathfrak{K}_n\subset \mathcal{K}$ such that $K_t(\omega)\in \mathfrak{K}_n$ when $0<t\le\tau(\omega)$, we say that $K$ is a \emph{predictable and locally bounded process}.
In this setting,  inspired by  \cite{Cohen1996}, we say that the semimartingale $X$ in $M$ is a solution to the SDE defined by ${\Psi}_{K_t}$ and driven by the semimartingale $Z$ on $N$ (and we write $dX_t= {\Psi}_{K_t}(dZ_t)$) if and only if

\begin{equation}\label{equation_manifold2}
\begin{array}{ccl}
X^i_t-X^i_0&=&\int_0^t{\partial_{z'^{\alpha}}(\overline{\Psi}^i_{K_s})
(X_{s_-},Z_{s_-},Z_{s_-})dZ^{\alpha}_s}+\frac 12 \int_0^t{\partial_{z'^{\alpha},z'^{\beta}}(\overline{\Psi}^i_{K_s})(X_{s_-},Z_{s_-},Z_{s_-})d[Z^{\alpha},Z^{\beta}]_s}\\
&+&\sum_{0\leq s \leq
t}\{\overline{\Psi}^i_{K_s}(X_{s_-},Z_{s},Z_{s_-})-\overline{\Psi}^i_{K_s}(X_{s_-},Z_{s_-},Z_{s_-})-\partial_{z'^{\alpha}}(\overline{\Psi}^i_{K_s})(X_{s_-},Z_{s_-},Z_{s_-})\Delta
Z^{\alpha}_s\}.
\end{array}
\end{equation}

The previous discussion can be resumed in the following

\begin{definition}\label{definition_solution}
Let $K$ be a predictable locally
bounded process taking values in $\mathcal{K}$. A pair of semimartingales $(X,Z)$ on $M$ and $N$ respectively is a solution to the \emph{geometrical
SDE} defined by ${\Psi}_{K_t}$  if $X$ and $Z$ solve the integral equation
\refeqn{equation_manifold2} (with $\overline{\Psi}$ given by \eqref{equation_manifold0}).
When we consider $\mathcal{K}$ consisting of a single point $\{k_0\}$, we write $\Psi$ instead of $\Psi_{k_0}$.
\end{definition}

\begin{theorem}\label{theorem_manifold1}
For any semimartingale $Z$ on
$N$, there exist a stopping time $\tau$ and a semimartingale $X$
on $M$ such that $(X^{\tau}, Z^{\tau})$ is a solution to \eqref{equation_manifold2}
\end{theorem}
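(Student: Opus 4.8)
The plan is to establish local existence by a fixed-point/Picard-type argument adapted to the semimartingale setting, following the strategy of Cohen \cite{Cohen1996}. First I would observe that, since $K$ is predictable and locally bounded, there is an increasing sequence of stopping times and compact sets $\mathfrak{K}_n\subset\mathcal{K}$ confining $K$; by stopping we may assume throughout that $K_t(\omega)$ stays in a fixed compact set $\mathfrak{K}$, on which $\overline{\Psi}_k$ and its first two $M,N$-derivatives are uniformly bounded and Lipschitz on compacts (uniformly in $k\in\mathfrak{K}$). Since $M$ is a subset of $\mathbb{R}^m$, I would extend the coefficients off a relatively compact neighbourhood so that they become globally Lipschitz and bounded in the $x$-variable; the stopping time $\tau$ will eventually be taken to be the exit time of $X$ from that neighbourhood, so that on $[0,\tau]$ the modified equation coincides with the original one and the solution genuinely stays in $M$.

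Next I would recast \eqref{equation_manifold2} as a fixed-point equation $X = \mathcal{F}(X)$ on a space of càdlàg adapted processes, where $\mathcal{F}$ maps a candidate semimartingale $Y$ to the right-hand side of \eqref{equation_manifold2} with $Y_{s^-}$ substituted for $X_{s^-}$. The drift and quadratic-variation integrals are handled by the usual It\^o-integral contraction estimates (Emery-type inequalities / Protter \cite{Protter1990}). The genuinely new term is the jump sum
\[
J_t(Y)=\sum_{0\le s\le t}\bigl\{\overline{\Psi}^i_{K_s}(Y_{s^-},Z_s,Z_{s^-})-\overline{\Psi}^i_{K_s}(Y_{s^-},Z_{s^-},Z_{s^-})-\partial_{z'^{\alpha}}\overline{\Psi}^i_{K_s}(Y_{s^-},Z_{s^-},Z_{s^-})\,\Delta Z^{\alpha}_s\bigr\};
\]
here I would use the second-order Taylor expansion of $\overline{\Psi}_k$ in the $z'$-variable together with $\overline{\Psi}_k(x,z,z)=\Psi_k(x,1_N)=x$ to bound each summand by $C\,|\Delta Z_s|^2$, so that $J(Y)$ is a well-defined finite-variation process dominated by $C\sum_{s\le t}|\Delta Z_s|^2\le C[Z]_t<\infty$, and moreover $J$ is Lipschitz in $Y$ with a constant controlled again by $[Z]$. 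Combining the three estimates and using that $[Z]$ and the bracket and drift parts of $Z$ can be made arbitrarily small on a short enough (random) time interval — more precisely, after a further stopping time $\sigma$ chosen so that the relevant controlling increasing process is below the contraction threshold — $\mathcal{F}$ becomes a contraction on the corresponding space, yielding a unique solution on $[0,\sigma]$.

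Finally I would patch the local solutions together: restarting the argument from $\sigma$ (using the Markov-free, pathwise nature of the estimates) extends the solution past $\sigma$, and iterating produces a solution up to a stopping time $\tau$ which is either $+\infty$ or the explosion/exit time; in either case $(X^\tau,Z^\tau)$ solves \eqref{equation_manifold2}, which is the claim. The main obstacle I expect is the control of the jump term $J$: one must verify that the second-order remainder in the Taylor expansion is genuinely summable against $[Z]$ (not merely against $\sum|\Delta Z|$, which would diverge) and that the resulting bound is Lipschitz in the unknown process with a constant that shrinks with the time horizon — this is where the hypotheses that $\Psi_k(x,1_N)=x$ and that $\Psi_k$ is $C^2$ in the $M,N$ variables with derivatives continuous in all arguments are essential, and where the argument departs most from the classical continuous-semimartingale theory.
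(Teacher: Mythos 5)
The paper does not actually prove Theorem \ref{theorem_manifold1}: it only cites Theorem 2 of Cohen \cite{Cohen1996}, so your sketch has to be measured against the standard argument behind that result rather than against anything in the text. Your overall strategy (localize $K$ and the coefficients, extend them off a relatively compact set, set up a Picard/contraction scheme in which the drift and bracket integrals are handled by Emery--Protter estimates and the jump sum is controlled by a second-order Taylor expansion in $z'$ using $\overline{\Psi}_k(x,z,z)=x$, then take $\tau$ to be the exit time) is indeed the right one and is essentially how the existence result is obtained in the literature.

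There is, however, one step that as written would fail and needs the standard repair. You propose to choose a stopping time $\sigma$ \emph{so that the relevant controlling increasing process is below the contraction threshold}; but an increasing process with jumps (here $[Z]$, or the variation of the finite-variation part of $Z$) cannot in general be stopped below a threshold: at the first time it reaches $\varepsilon$ it may already have jumped far above it, e.g.\ at a single large jump of $Z$. The contraction argument therefore only closes on stochastic intervals on which the driver has uniformly small jumps. The standard fix --- and the route taken in the proof you are reconstructing --- is to separate the finitely many \emph{large} jumps of $Z$ on compacts: between consecutive large jumps one runs the contraction with the small-jump driver (where your $\sum_s|\Delta Z_s|^2$-type control genuinely can be made small by slicing), and at a large-jump time $\sigma$ the equation imposes no fixed point at all, since it reduces to the explicit assignment $X_\sigma=\overline{\Psi}_{K_\sigma}(X_{\sigma-},Z_\sigma,Z_{\sigma-})$, after which one restarts (interlacing). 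Relatedly, your Taylor bound $C\,|\Delta Z_s|^2$ has a constant involving $\partial^2_{z'z'}\overline{\Psi}$ on segments joining $Z_{s-}$ and $Z_s$, which for large jumps is only a pathwise (random, locally bounded) constant; this is harmless once the large jumps are interlaced and everything else is localized, but it is another reason the pure one-shot contraction including all jumps does not quite work. With this amendment your proposal matches the intended proof; without it, the key smallness step is unjustified.
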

\begin{proof}
The proof can be found in \cite{Cohen1996},
Theorem 2.
${}\hfill$ \end{proof}\\

\subsection{Geometrical SDEs and diffeomorphisms}

The notion of geometrical SDE introduced  in Definition \ref{definition_solution} naturally suggests to consider transformations of solutions to a SDE.

\begin{theorem}\label{theorem_geometrical1}
Given $M,M'\subset \mathbb{R}^m$ and $N,N'$ two matrix Lie groups,  let $\Phi:M \rightarrow M'$ and $\tilde{\Phi}:N \rightarrow N'$ be two diffeomorphisms which respect the group action. If $(X,Z)$ is a solution to the geometrical SDE
${\Psi}_{K_t}$, then $(\Phi(X),\tilde{\Phi}(Z))$ is a solution to the geometrical SDE ${\Psi}'_{K_t}$ defined  by
$$\overline{\Psi}'_{K_t}(x,z',z)=\Phi(\overline{\Psi}_{K_t}(\Phi^{-1}(x),\tilde{\Phi}^{-1}(z'),\tilde{\Phi}^{-1}(z))).$$
\end{theorem}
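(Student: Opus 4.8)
The plan is to verify directly that $(\Phi(X),\tilde\Phi(Z))$ solves the integral equation \refeqn{equation_manifold2} attached to $\overline{\Psi}'_{K_t}$, with driving semimartingale $W:=\tilde\Phi(Z)$. Two structural facts do the work. First, since $\tilde\Phi$ respects the group operation it carries group jumps to group jumps: for every $s$,
\[
\Delta W_s=W_s\cdot(W_{s^-})^{-1}=\tilde\Phi(Z_s)\cdot\tilde\Phi(Z_{s^-})^{-1}=\tilde\Phi(\Delta Z_s),\qquad \tilde\Phi^{-1}(W_s)=Z_s,
\]
so that, writing $Y:=\Phi(X)$, the defining formula for $\overline{\Psi}'$ yields $\overline{\Psi}'_{K_s}(Y_{s^-},W_{s^-},W_{s^-})=\Phi(\overline{\Psi}_{K_s}(X_{s^-},Z_{s^-},Z_{s^-}))=Y_{s^-}$ and $\overline{\Psi}'_{K_s}(Y_{s^-},W_s,W_{s^-})=\Phi(\overline{\Psi}_{K_s}(X_{s^-},Z_s,Z_{s^-}))$. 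Second, since $\tilde\Phi$ is a diffeomorphism of matrix Lie groups, each coordinate $W^\alpha=\tilde\Phi^\alpha(Z)$ is a semimartingale on $N'$ by Lemma \ref{lemma_Ito}, so the right-hand side of \refeqn{equation_manifold2} for $(Y,W)$ is meaningful, and $K$ stays predictable and locally bounded.

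Then I would compute both members of the candidate equation. On the left I apply the It\^o formula (Lemma \ref{lemma_Ito}) to the coordinate functions $\Phi^i$ along $X$, obtaining $Y^i_t-Y^i_0$ as the sum of $\int_0^t\partial_{x^k}\Phi^i(X_{s^-})\,dX^k_s$, the correction $\tfrac12\int_0^t\partial_{x^k x^l}\Phi^i(X_{s^-})\,d[X^k,X^l]^c_s$, and the pure-jump sum $\sum_{0\le s\le t}\{\Phi^i(X_s)-\Phi^i(X_{s^-})-\partial_{x^k}\Phi^i(X_{s^-})\,\Delta X^k_s\}$; into $dX^k_s$ I substitute \refeqn{equation_manifold2}, using that the continuous quadratic variation of $X$ is carried only by the first-order coefficients, $d[X^k,X^l]^c_s=\partial_{z'^\gamma}\overline{\Psi}^k_{K_s}\,\partial_{z'^\delta}\overline{\Psi}^l_{K_s}\,d[Z^\gamma,Z^\delta]^c_s$, all evaluated at $(X_{s^-},Z_{s^-},Z_{s^-})$. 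On the right I apply the It\^o formula to the coordinate functions $\tilde\Phi^\alpha$ along $Z$ to rewrite $dW^\alpha_s$ and $d[W^\alpha,W^\beta]_s$ in terms of $dZ$, $d[Z]^c$ and the jumps of $Z$, and I use the chain rule to express the first and second $z'$-derivatives of $\overline{\Psi}'_{K_s}$ at $(Y_{s^-},W_{s^-},W_{s^-})$ through $\partial\Phi(X_{s^-})$, the derivatives of $\overline{\Psi}_{K_s}$ at $(X_{s^-},Z_{s^-},Z_{s^-})$, and the first and second derivatives of $\tilde\Phi^{-1}$ at $W_{s^-}$.

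The identification then goes term by term. The first-order ($dZ^\gamma$) parts agree because differentiating $\tilde\Phi^{-1}\circ\tilde\Phi=\mathrm{id}_N$ once collapses the product $\partial_{z'}(\tilde\Phi^{-1})\cdot\partial_z\tilde\Phi$ to the identity, leaving exactly $\partial_{x^k}\Phi^i\,\partial_{z'^\gamma}\overline{\Psi}^k_{K_s}$. The second-order parts (along $d[Z^\gamma,Z^\delta]^c$) agree after summing four contributions — the $\partial^2\Phi$ term from It\^o for $\Phi$, the $\partial^2\overline{\Psi}$ term of \refeqn{equation_manifold2}, the $\partial^2\tilde\Phi^{-1}$ term from the chain rule, and the $\partial^2\tilde\Phi$ term from It\^o for $\tilde\Phi$ — and invoking the relation between $\partial^2\tilde\Phi^{-1}$ and $\partial^2\tilde\Phi$ obtained by differentiating $\tilde\Phi^{-1}\circ\tilde\Phi=\mathrm{id}_N$ twice; any jump contribution coming from the quadratic-variation term is grouped with the jump sums. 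Finally, by the two identities recorded above and by $\Delta W_s=\tilde\Phi(\Delta Z_s)$, each jump summand of the candidate equation for $(Y,W)$ is the image under $\Phi$ of the corresponding summand of \refeqn{equation_manifold2} for $(X,Z)$, modulo the first-order correction already matched among the $dZ$-terms; assembled with the It\^o correction for $\Phi$ this reproduces $\sum_s\{\Phi^i(X_s)-\Phi^i(X_{s^-})-\partial_{x^k}\Phi^i(X_{s^-})\,\Delta X^k_s\}$ and hence the left-hand side. Local summability of this sub-sum — needed for \refeqn{equation_manifold2} to be well posed for $(Y,W)$ — is inherited from the original one, because $\Phi$ and $\tilde\Phi$ are smooth with derivatives bounded on the compacts visited by the localized processes.

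I expect the main obstacle to be this second-order bookkeeping: tracking which $\tfrac12$'s and which pairs of dummy indices come from the It\^o correction for $\Phi$, from the It\^o correction for $\tilde\Phi$, from the explicit second derivative in \refeqn{equation_manifold2}, and from the chain-rule second derivative of $\tilde\Phi^{-1}$, and checking that two uses of ``differentiate $\tilde\Phi^{-1}\circ\tilde\Phi=\mathrm{id}$'' make everything cancel. To keep this manageable I would first treat the case $\tilde\Phi=\mathrm{id}_N$ (only the space diffeomorphism acts: only Lemma \ref{lemma_Ito} for $\Phi$ and the chain rule are needed, recovering the transformation rule already announced for $(\Phi(X),Z)$), then the case $\Phi=\mathrm{id}_M$ (only the group isomorphism acts: the jump identity $\Delta W=\tilde\Phi(\Delta Z)$ carries most of the weight and the second-order terms from $\tilde\Phi$ and $\tilde\Phi^{-1}$ cancel against each other), and finally compose, the general statement following since $(\Phi,\tilde\Phi)$ factors as $(\Phi,\mathrm{id}_{N'})\circ(\mathrm{id}_M,\tilde\Phi)$ and the transformation rule for $\overline{\Psi}$ is compatible with this composition.
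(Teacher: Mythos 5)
Your plan is correct and coincides with the paper's own proof: the paper likewise reduces to the two special cases $\tilde{\Phi}=Id_N$ and $\Phi=Id_M$ (combining them for the general statement), applies the It\^o formula with jumps to $\Phi$ along $X$ and to $\tilde{\Phi}^{-1}$ (equivalently $\tilde{\Phi}$) along the noise, and concludes via the chain rule, with Lemma \ref{lemma_geometrical1} playing the role of your term-by-term identification of jumps, continuous quadratic variations and integrands. The cancellation mechanism you describe (differentiating $\tilde{\Phi}^{-1}\circ\tilde{\Phi}=\mathrm{id}$ once and twice) is exactly what the paper's appeal to the chain rule amounts to, so no essential difference remains.
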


In order to prove Theorem \ref{theorem_geometrical1} we recall the following  lemma.

\begin{lemma}\label{lemma_geometrical1}
Given $k$  c\acc{a}dl\acc{a}g semimartingales  $X^1,...,X^k$, let $H^{\alpha}_1,...,H^{\alpha}_k$, for $\alpha=1,...,r$, be predictable processes
which can be integrated along $X^1,...,X^k$ respectively. If $\Phi^{\alpha}(t,\omega,x^1,x'^1,...,x^k,x'^k):\mathbb{R}_+ \times \Omega
\times \mathbb{R}^{2k} \rightarrow \mathbb{R}$ are some progressively measurable random functions continuous in
$x^1,x'^1,...,x^k,x'^k$ and such that $|\Phi^{\alpha}(t,\omega,x^1,x'^1,...,x^k,x'^k)| \leq O((x^1-x'^1)^2+...+(x^k-x'^k)^2)$ as $x^i
\rightarrow x'^i$, for  almost every fixed $\omega \in \Omega$ and uniformly on compact subsets of $\mathbb{R}_+ \times \mathbb{R}^{2k}$, the processes
$$Z^{\alpha}_t=\int_0^t{H^{\alpha}_{i,s}dX^i_s}+\sum_{0 \leq s \leq t}\Phi^{\alpha}(s,\omega,X^1_{s_-},X^1_s,...,X^k_{s_-},X^k_s)$$
are semimartingales. Furthermore
\begin{eqnarray}
&\Delta Z^{\alpha}_t=H^{\alpha}_{i,t} \Delta X^i_t + \Phi^{\alpha}(t,\omega,X^1_{t_-},X^1_t,...,X^k_{t_-},X^k_t),\label{equation_gauge4}\\
&[Z^{\alpha},Z^{\beta}]^c_t=\int_0^t{H^{\alpha}_{i,s}H^{\beta}_{j,s}d[X^i,X^j]^c_s},&\label{equation_gauge2}\\
&\int_0^t{K_{\alpha,s}dZ^{\alpha}_s}=\int_0^t{K_{\alpha,s}H^{\alpha}_{i,s}dX^i_s}+\sum_{0 \leq s \leq t}K_{\alpha,s}\Phi^{\alpha}(s,\omega,X^1_{s_-},X^1_s,....,X^k_{s_-},X^k_s),&\label{equation_gauge3}
\end{eqnarray}
where $K_{\alpha,s}$ is any predictable locally bounded process.
\end{lemma}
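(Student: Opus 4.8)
The plan is to write $Z^\alpha = A^\alpha + V^\alpha$, where $A^\alpha_t := \int_0^t H^\alpha_{i,s}\,dX^i_s$ is a semimartingale by construction and $V^\alpha_t := \sum_{0\le s\le t}\Phi^\alpha(s,\omega,X^1_{s_-},X^1_s,\dots,X^k_{s_-},X^k_s)$ is the pure-jump remainder. Everything reduces to showing that $V^\alpha$ is a genuine càdlàg adapted process of locally bounded variation; then $Z^\alpha$ is a semimartingale as the sum of a semimartingale and a finite-variation process, and the three identities follow by standard manipulations.

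First I would localize by the stopping times $\tau_R := \inf\{t\ge 0:\ |X^i_t|\ge R\ \text{for some }i\}$, which increase to $+\infty$; on $[0,\tau_R]$ the arguments $(s,X^i_{s_-},X^i_s)$ all lie in a fixed compact subset $\mathcal{C}$ of $\mathbb{R}_+\times\mathbb{R}^{2k}$. On $\mathcal{C}$ the hypothesis $|\Phi^\alpha| = O\big(\sum_i (x^i-x'^i)^2\big)$ near the diagonal, together with continuity of $\Phi^\alpha$ in the spatial variables, produces a finite ($\omega$-dependent) constant $C_R$ with $|\Phi^\alpha(s,\omega,x^1,x'^1,\dots)|\le C_R\sum_i(x^i-x'^i)^2$ for all arguments in $\mathcal{C}$ (away from the diagonal $\Phi^\alpha/\sum_i(x^i-x'^i)^2$ is bounded by compactness and continuity, near the diagonal by hypothesis). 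Plugging in the jumps and using $\sum_{0\le s\le t}(\Delta X^i_s)^2 \le [X^i]_t < \infty$ gives
\[
\sum_{0\le s\le t}\big|\Phi^\alpha(s,\omega,X^1_{s_-},X^1_s,\dots,X^k_{s_-},X^k_s)\big|\ \le\ C_R\sum_i [X^i]_t\ <\ \infty
\qquad\text{on } [0,\tau_R],
\]
so the series defining $V^\alpha$ converges absolutely; since $\tau_R\to+\infty$ this holds for every $t$. A process equal to an absolutely summable sum of its own jumps is càdlàg and of locally bounded variation, with $\Delta V^\alpha_t = \Phi^\alpha(t,\omega,X^1_{t_-},X^1_t,\dots,X^k_{t_-},X^k_t)$; adaptedness follows from progressive measurability of $\Phi^\alpha$. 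Hence $V^\alpha$, and therefore $Z^\alpha$, is a semimartingale.

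The identities are then bookkeeping. For \eqref{equation_gauge4}, combine $\Delta A^\alpha_t = H^\alpha_{i,t}\Delta X^i_t$ (the jump of a stochastic integral) with the expression for $\Delta V^\alpha_t$ just obtained. For \eqref{equation_gauge2}, expand $[Z^\alpha,Z^\beta] = [A^\alpha,A^\beta]+[A^\alpha,V^\beta]+[V^\alpha,A^\beta]+[V^\alpha,V^\beta]$ by bilinearity; since $V^\alpha,V^\beta$ have finite variation, each bracket containing one of them is purely discontinuous (for $V$ of finite variation $[W,V]_t = \sum_{0<s\le t}\Delta W_s\Delta V_s$), hence has vanishing continuous part, while $[A^\alpha,A^\beta]^c_t = \int_0^t H^\alpha_{i,s}H^\beta_{j,s}\,d[X^i,X^j]^c_s$ follows from $[H\cdot X, K\cdot Y] = \int HK\,d[X,Y]$, bilinearity in $i,j$, and taking continuous parts. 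For \eqref{equation_gauge3}, split $\int_0^t K_{\alpha,s}\,dZ^\alpha_s = \int_0^t K_{\alpha,s}\,dA^\alpha_s + \int_0^t K_{\alpha,s}\,dV^\alpha_s$; the first term is $\int_0^t K_{\alpha,s}H^\alpha_{i,s}\,dX^i_s$ by associativity of the stochastic integral, and the second, being the integral of a predictable locally bounded process against a pure-jump finite-variation process, equals $\sum_{0\le s\le t}K_{\alpha,s}\Phi^\alpha(s,\omega,X^1_{s_-},X^1_s,\dots)$, absolutely convergent since $K_\alpha$ is bounded on $[0,\tau_R]$.

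The hard part is the very first step: upgrading the near-diagonal, locally-uniform bound on $\Phi^\alpha$ to an honest quadratic majorant on each compact set, and then using $\sum_{0\le s\le t}(\Delta X^i_s)^2\le[X^i]_t<\infty$ to conclude that the jump series defining $V^\alpha$ converges absolutely and defines a locally-finite-variation process. Once $Z^\alpha$ is identified as a semimartingale with the stated jump structure, the remaining assertions are immediate from bilinearity of the quadratic covariation and associativity of the stochastic integral.
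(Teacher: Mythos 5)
Your argument is correct, and it is the natural (essentially standard) proof of this statement; note that the paper itself gives no in-text argument for this lemma but simply refers to Lemma 2.10 of \cite{Annals}, whose proof proceeds along the same lines as yours: decompose $Z^\alpha$ into the stochastic integral plus the jump sum, obtain a quadratic majorant for $\Phi^\alpha$ on compacts (using that the near-diagonal $O$-bound forces $\Phi^\alpha$ to vanish on the diagonal, so the sum runs over the countably many jump times), dominate by $\sum_i[X^i]_t<\infty$ to get an adapted c\`adl\`ag finite-variation process, and then read off \refeqn{equation_gauge4}--\refeqn{equation_gauge3} from the jump of a stochastic integral, bilinearity of the bracket with $[W,V]^c=0$ for $V$ of finite variation, and associativity of the stochastic integral. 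One small imprecision: at $s=\tau_R$ the value $X_{\tau_R}$ may lie outside the ball of radius $R$, so the arguments are only in a fixed compact on $[0,\tau_R)$; this is harmless, since it adds at most one finite term to the sum (or can be avoided by localizing with $\inf\{t:|X^i_t|\ge R \text{ or } |X^i_{t-}|\ge R\}$ and treating the terminal jump separately).
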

\begin{proof}
The proof can be found in \cite{Annals}, Lemma 2.10.
\hfill\end{proof}\\

\begin{remark}\label{remark_geometrical1}
Let $ \mathcal{K}$ be a metric space, $K \in \mathcal{K}$ be a locally bounded predictable process and $\tilde{\Phi}: \mathbb{R}_+ \times
\mathcal{K} \times \mathbb{R}^{2k} \rightarrow \mathbb{R}$ be a $C^2$ function in $\mathbb{R}_+$ and depending on $\mathbb{R}^{2k}$ variables such that
$\tilde{\Phi}$ and all its derivatives are continuous in all of  their arguments. If
$\tilde{\Phi}(\cdot,\cdot,x^1,x^1,...,x^k,x^k)=\partial_{x'^i}(\tilde{\Phi})(\cdot,\cdot,x^1,x^1,...,x^k,x^k)=0$
 for $i=1,...k$, then $\Phi(t,\omega,...)=\tilde{\Phi}(t,K_t(\omega),...)$ satisfies the
hypothesis of Lemma \ref{lemma_geometrical1}.
\end{remark}

\begin{proof}[Proof of Theorem \ref{theorem_geometrical1}]

In order to simplify the proof we consider the two special cases $M=M'$, $\Phi=Id_M$ and $N=N'$, $\tilde{\Phi}=Id_N$.
The general case can be obtained combining  these two cases.\\
If $M=M'$ and $\Phi=Id_M$,  putting
$\tilde{Z}=\tilde{\Phi}(Z)$, so that  $Z=\tilde{\Phi}^{-1}(\tilde {Z})$,
by It\^o formula for semimartingales with jumps, Lemma
\ref{lemma_geometrical1} and Remark \ref{remark_geometrical1} we
have
\begin{eqnarray*}
Z^{\alpha}_t-Z^{\alpha}_0&=&\int_0^t{\partial_{\tilde{z}^{\beta}}(\tilde{\Phi}^{-1})^{\alpha}(\tilde{Z}_{s_-})d\tilde{Z}^{\beta}_s+\partial_{\tilde{z}^{\beta}\tilde{z}^{\gamma}}(\tilde{\Phi}^{-1})^{\alpha}(\tilde{Z}_{s_-}) d[\tilde{Z}^{\beta},\tilde{Z}^{\gamma}]^c_s}+\\
&&+\sum_{0\leq s \leq t}((\tilde{\Phi}^{-1})^{\alpha}(\tilde{Z}_s)-(\tilde{\Phi}^{-1})^{\alpha}(\tilde{Z}_{s_-})-\partial_{\tilde{z}^{\beta}}(\tilde{\Phi}^{-1})^{\alpha}(\tilde{Z}_{s_-})\Delta\tilde{Z}^{\beta}_s\\
d[Z^{\alpha},Z^{\beta}]^c_t&=&\partial_{\tilde{z}^{\gamma}}(\tilde{\Phi}^{-1})^{\alpha}(Z_{s_-})\partial_{\tilde{z}^{\delta}}(\tilde{\Phi}^{-1})^{\beta}(Z_{s_-})d[\tilde{Z}^{\gamma},\tilde{Z}^{\delta}]^c_t\\
\Delta
Z^{\alpha}_t&=&(\tilde{\Phi}^{-1})^{\alpha}(\tilde{Z}_t)-(\tilde{\Phi}^{-1})^{\alpha}(\tilde{Z}_{t_-}).
\end{eqnarray*}
The conclusion of  Theorem \ref{theorem_geometrical1} follows using the definition of solution to the geometrical SDE $\Psi_{K_t}$, Lemma \ref{lemma_geometrical1} and the chain rule
for derivatives.\\
Suppose now that $N=N'$ and $\tilde{\Phi}=Id_N$. Putting $X'=\Phi(X)$,
by It\^o formula we obtain
\begin{eqnarray*}
dX'^i_t&=& {\partial_{x^j}(\Phi^i)(X_{s_-})dX^j_s}+{\partial_{x^jx^h}(\Phi^i)(X_{s_-})d[X^j,X^h]^c_s}+\\
&&+(\Phi^i(X_s)-\Phi^i(X_{s_-})-\partial_{x^j}(\Phi^i)(X_{s_-})\Delta
X^j_s).
\end{eqnarray*}
Furthermore, by definition of solution to the geometrical SDE
$\Psi$ and by  Lemma \ref{lemma_geometrical1} we have
\begin{eqnarray*}
dX^i_s&=&\partial_{z'^{\alpha}}(\overline{\Psi}^i_{K_s})(X_{s_-},Z_{s_-},Z_{s_-})
dZ^{\alpha}_s+\partial_{z'^{\alpha},z'^{\beta}}(\overline{\Psi}_{K_s}^i)(X_{s_-},Z_{s_-},Z_{s_-})d[Z^{\alpha},Z^{\beta}]^c_s +\\
&&+\overline{\Psi}_{K_s}^i(X_{s_-},Z_{s},Z_{s_-})-\overline{\Psi}_{K_s}^i(X_{s_-},Z_{s_-},Z_{s_-})-
\partial_{z'^{\alpha}}(\overline{\Psi}_{K_s}^i)(X_{s_-},Z_{s_-},Z_{s_-})\Delta Z^{\alpha}_s,\\
d[X^i,X^j]^c_s&=&\partial_{z'^{\alpha}}(\overline{\Psi}_{K_s}^i)(X_{s_-},Z_{s_-},Z_{s_-})
\partial_{z'^{\beta}}(\overline{\Psi}_{K_s}^i)(X_{s_-},Z_{s_-},Z_{s_-})d[Z^{\alpha},Z^{\beta}]^c_s\\
\Delta X^i_s&=&\overline{\Psi}_{K_s}^i(X_{s_-},Z_s,Z_{s_-})-\overline{\Psi}_{K_s}^i(X_{s_-},Z_{s_-},Z_{s_-}).
\end{eqnarray*}
Using the previous relations, the fact that $X=\Phi^{-1}(X')$ and the chain rule for derivatives we get the thesis. ${}\hfill$ \end{proof}

\subsection{A comparison with other approaches}\label{subsection_comparison}

In order to show the generality of our approach (mainly inspired by \cite{Cohen1996}) in this section we show how
the definition of geometrical SDEs driven by semimartingales with jumps may include many interesting classes  of SDEs
driven by c\acc{a}dl\acc{a}g processes appearing in the literature such as
 affine-type SDEs  of the type studied in \cite[Chapter
V]{Protter1990} and \cite[Chapter 5]{Bichteler2002},  as well as SDEs driven
by L\'evy processes with smooth coefficients (see e.g. \cite{Applebaum2004,Kunita2004}) and smooth iterated random functions (see e.g.
\cite{Arnold1998,Diaconis1999}). Moreover, Marcus-type SDEs (see \cite{Protter1995,Marcus1978,Marcus1981}) can be successfully considered in this setting: the
interested reader is referred to \cite{Annals} for a detailed study of this topic.

\subsubsection{Affine-type SDEs}\label{subsection_affine}

We briefly describe the affine type SDEs as proposed e.g. in \cite[Chapter V]{Protter1990}. In particular we show how it is possible to rewrite them according to our geometrical setting.\\
Let $(Z^1,...,Z^n)$ be a semimartingale in $N$ and let $\sigma:M
\rightarrow \matr(m,n)$ be a smooth function taking values in the
set of $m \times n$ matrices with real elements. We consider the SDE defined by
\begin{equation}\label{equation_affine}
dX^i_t=\sigma^i_{\alpha}(X_t) dZ^{\alpha}_t,
\end{equation}
where $\sigma^i_j$ are the components of the matrix $\sigma$. If $Z^1_t=t$  and $Z^2,...,Z^n$ are independent Brownian motions, we have the usual diffusion processes with drift $(\sigma^1_1,...,\sigma^m_1)$ and diffusion matrix $(\sigma^i_{\alpha})|_{\stackrel{i=1,...,m}{\alpha=2,...,n}}$.\\
The previous affine-type SDE can be rewritten as a geometrical SDE
defined by the function $\overline{\Psi}$
$$\overline{\Psi}(x,z',z)=x+\sigma(x) \cdot (z'-z),$$
or, in coordinates,
$$\overline{\Psi}^i(x,z',z)=x^i+\sigma^i_{\alpha}(x)(z'^{\alpha}-z^{\alpha}).$$
In fact, by definition of geometrical SDE
$\Psi$, we have
\begin{eqnarray*}
X^i_t-X^i_0&=&\int_0^t{\partial_{z'^{\alpha}}(\overline{\Psi}^i)(X_{s_-},Z_{s_-},Z_{s_-})dZ^{\alpha}_s}+\int_0^t{\partial_{z'^{\alpha},z'^{\beta}}(\overline{\Psi}^i)(X_{s_-},Z_{s_-},Z_{s_-})d[Z^{\alpha},Z^{\beta}]_s}\\
&+&\sum_{0\leq s \leq t}\{\overline{\Psi}^i(X_{s_-},Z_{s},Z_{s_-})-\overline{\Psi}^i(X_{s_-},Z_{s_-},Z_{s_-})-\partial_{z'^{\alpha}}(\overline{\Psi}^i)(X_{s_-},Z_{s_-},Z_{s_-})\Delta Z^{\alpha}_s\},\\
&=&\int_0^t{\sigma^i_{\alpha}(X_{s_-})dZ^{\alpha}_s}+\sum_{0\leq s \leq t}\{\sigma^i_{\alpha}(X_{s_-})(Z^{\alpha}_s-Z^{\alpha}_{s_-})-\sigma^i_{\alpha}(X_{s_-})\Delta Z^{\alpha}_s\}\\
&=&\int_0^t{\sigma^i_{\alpha}(X_{s_-})dZ^{\alpha}_s}.
\end{eqnarray*}

\subsubsection{Smooth SDEs driven by a L\'evy process}\label{subsubsection_Levy}

In this section we introduce a class of SDEs driven by $\mathbb{R}^n$-valued L\'evy processes (see, e.g.,
\cite{Applebaum2004,Kunita2004}), which we denote as smooth SDEs. It is well known that an $\mathbb{R}^n$-valued L\'evy process $(Z^1,...,Z^n)$ can be always decomposed into the sum of suitable Brownian
motions and compensated Poisson processes defined on $\mathbb{R}^n$. Moreover such process can be identified by a
vector $b_0=(b_0^1,...,b_0^n) \in \mathbb{R}^n$, an $n \times n$ matrix $A_0^{\alpha\beta}$ and a positive $\sigma$-finite measure $\nu_0$
defined on $\mathbb{R}^n$ (called L\'evy measure, see, e.g., \cite{Applebaum2004,Sato1999}) such that
$$\int_{\mathbb{R}^n}{\frac{|z|^2}{1+|z|^2}d\nu_0(z)} < + \infty.$$
By  L\'evy-It\^o decomposition, the triplet $(b_0,A_0,\nu_0)$ is such that there exist an $n$ dimensional Brownian motion $(W^1,...,W^n)$ and a
Poisson measure $P(dz,dt)$ defined on $\mathbb{R}^n$ such that
\begin{eqnarray*}
Z^{\alpha}_t&=&b^{\alpha}_0 t+ C^{\alpha}_{\beta} W^{\beta}_t+\int_0^t{\int_{|z| \leq 1}{z^{\alpha}(P(dz,ds)-d\nu_0(z)ds)}}+\\
&&+\int_0^t{\int_{|z|>1}{z^{\alpha}P(dz,ds)}},
\end{eqnarray*}
where $A^{\alpha\beta}_0=\frac{1}{2}\sum_{\gamma}C^{\alpha}_{\gamma}C^{\beta}_{\gamma}$. We assume for simplicity that $b_0^1=1$ and
$b^{\alpha}_0=0$ for $\alpha>1$, that there exists
 $n_1$ such that $A^{\alpha\beta}_0=\delta^{\alpha\beta}$ for $1< \alpha ,\beta \leq n_1$ and
 $A^{\alpha\beta}_0=0$ for $\alpha$ or $\beta$ in $\{1,n_1+1,...,n\}$, and finally  that
 $\int_0^t{\int_{|z| \leq 1}{z^{\alpha}(P(dz,ds)-d\nu_0(z)ds}}=0$ and $\int_0^t{\int_{|z|>1}{z^{\alpha}P(dz,ds)}}=0$ for $\alpha \leq n_1$. \\
Given a vector field $\mu$ on $M$, a set of $n_1-1$ vector
fields $\sigma=(\sigma_2,...,\sigma_{n_{1}})$ on $M$ and a smooth
(both in $x $ and $z$) function $F:M \times \mathbb{R}^{n-n_1}
\rightarrow \mathbb{R}^m$ such that $F(x,0)=0$, we  say that
a semimartingale $X \in M$ is a solution to the smooth SDE
$(\mu,\sigma,F)$ driven by the $\mathbb{R}^n$-valued L\'evy process
$(Z^1,...,Z^n)$ if
\begin{eqnarray*}
X^i_t-X^i_0&=&\int_0^t{\mu^i(X_{s_-})dZ^1_s}+\int_0^t{\sum_{\alpha=2}^{n_1}\sigma^i_{\alpha}(X_{s_-})dZ_s^{\alpha}}+\\
&&+\int_0^t{\int_{\mathbb{R}^{n-n_1}}{F^i(X_{s_-},z)(P(dz,ds)-I_{|z|\leq 1}\nu_0(dy)ds)}},
\end{eqnarray*}
with $I_{|z|\leq 1}$ the indicator function of the set
$\{|z|\leq 1\}\subset \mathbb{R}^{n-n_1}$. We introduce the function
$$\overline{\Psi}^i(x,z',z)=x^i+\tilde{\mu}^i(x)(z'^1-z^1)+\sigma^i_{\alpha}(x)(z'^{\alpha}-z^{\alpha})+F^i(x,z'-z),$$
where
$$\tilde{\mu}^i(x)=\mu^i(x)-\int_{|z|\leq 1}{(F^i(x,z)-\partial_{z^{\alpha}}(F^i)(x,z)z^{\alpha})d\nu_0(z)}.$$
Then any solution $X$ to the smooth SDE
$(\mu,\sigma,F)$ driven by the L\'evy process $(Z^1,...,Z^n)$ is
 solution to the geometrical SDE associated with $\overline{\Psi}$ and driven by
the $\mathbb{R}^n$ semimartingale $(Z^1,...,Z^n)$. Also the converse is true.

\begin{remark}
Our request on smoothness of $F$ in both $x,z$ is  a stronger requirement with respect to the usual assumption
that $F$ is Lipschitz in $x$ and measurable in $z$. This stronger regularity assumption is the reason for the denomination  \emph{smooth SDE} driven by a L\'evy process.
\end{remark}

\subsubsection{Iterated random smooth functions}\label{subsection_iterated}

In the previous sections we have only  considered continuous time processes $Z_t, t \in \mathbb{R}_{+}$. Let us now take $Z$ as a discrete time
adapted process, i.e. $Z$ is a sequence of random variables $Z_0,Z_1,...,Z_l,...$ defined on $N$. We can consider $Z$ as a c\acc{a}dl\acc{a}g
continuous time process $Z_t$ defined as
$$Z_t=Z_l \text{ if } l \leq t < l+1, \quad  t \in \mathbb{R}_{+}.$$
Since the process $Z$ is a pure jump process with a finite number of jumps in any compact interval of $\mathbb{R}_+$, $Z$ is a
semimartingale. If $(X,Z)$ is a solution to the geometrical SDE  ${\Psi}$, we have that
\begin{equation}\label{equation_difference}
X_l={\Psi}(X_{l-1},\Delta Z_l)
\end{equation}
and $X_t=X_l$ if $l \leq t < l+1$. The process $X$ can be viewed as a discrete time process defined by the recursive relation
\refeqn{equation_difference}. These processes are special forms of \emph{iterated random functions} (see e.g.
\cite{Arnold1998,Diaconis1999,Schreiber2012}) and  this kind of equations is very important in time series analysis (see, e.g.,
\cite{Brockwell1991,Shumway2006}) and in numerical simulation of SDEs (see, e.g. \cite{Kloeden1992} for simulation of SDEs and
\cite{DeVecchi2017} for the concept of strong symmetry of a discretization scheme). In this case we do not need that ${\Psi}$ is smooth
in all its variables and that ${\Psi}(x,1_N)=x$ for any $x \in M$. In the case of a discrete time semimartingale $Z_t$
these two conditions can be skipped and
we can consider more general iterated random functions defined by relation \refeqn{equation_difference}.\\
An important example of iterated random functions can be obtained by considering $M=\mathbb{R}^m$, $N=GL(m) \times
\mathbb{R}^m$ and the functions
$$\overline{\Psi}(x,z',z)=(z'_1 \cdot z_1^{-1}) \cdot x + (z'_2-z_2),$$
where $(z_1,z_2) \in GL(m) \times \mathbb{R}^m$. Moreover, taking  two sequences of random variables $A_0,...,A_l,... \in GL(n)$ and
$B_0,...,B_l,... \in \mathbb{R}^m$, we  define
$$Z_l=\left(A_l \cdot A_{l-1} \cdot ....\cdot A_0, B_l +B_{l-1} + ....+ B_0  \right).$$
The iterated random functions associated with the SDE ${\Psi}$ is
$$X_l=A_l \cdot X_{l-1}+B_{l}.$$
This model is very well studied (see, e.g. \cite{Arnold1998,Babillot1997,Kesten1973}). In particular the well known  ARMA model is of this form
(see, e.g., \cite{Brockwell1991,Shumway2006}).

\section{Gauge transformations and gauge symmetries}\label{section_gauge}

In this section, we  generalize the  well known noise change property of affine-type SDEs driven by c\acc{a}dl\acc{a}g semimartingales.
If $M=\mathbb{R}^m$ and $N \subseteq \mathbb{R}^n$, we can consider the
affine SDE
$$dX^i_t=\sigma^i_{\alpha}(X_{t_-}) dZ^{\alpha}_t,$$
and we can define a new semimartingale on $N$ given by
\begin{equation}\label{equation_semimartingales_change1}
d\tilde{Z}^{\alpha}_t=B^{\alpha}_{\beta,t}dZ^{\beta}_t,
\end{equation}
where $B=(B^{\alpha}_{\beta})$ is a locally bounded predictable
process taking values in $GL(n)$. Therefore, the affine SDE can be rewritten in terms of the
semimartingale $\tilde{Z}$ in the following way
\begin{equation}\label{equation_property}
dX^i_t=\sigma^{i}_{\alpha}(X_{t_-})(B^{-1})^{\alpha}_{\beta,t} d
\tilde{Z}^{\beta}_t,
\end{equation}
where $B^{-1}$ is the inverse matrix of $B$.\\

The notion of geometrical SDEs allows us to introduce  a useful generalization of the semimartingales change rule \refeqn{equation_property}.\\
Given a Lie group $\mathcal{G}$,  suppose that $M=\tilde{N}$ for some Lie group $\tilde{N}$. If we
consider a smooth function
$$\Xi_{\cdot}(\cdot):N \times \mathcal{G} \rightarrow \tilde{N}$$
satisfying  $\Xi_g(1_N)=1_{\tilde{N}}, \forall g \in \mathcal{G}$, we can define the map
$${\Psi}^{\Xi}_g(x,z)=\Xi_g(z) \cdot x.$$
If $Z$ is a semimartingale on $N$,  we define the transformed semimartingale on $\tilde{N}$ by
\begin{equation}\label{equation_semimartingales_change2}
d\tilde{Z_t}=\Xi_{G_t}(dZ_t)
\end{equation}
as the unique solution $(\tilde{Z},Z)$  to the equation
$$d\tilde{Z}_t={\Psi}^{\Xi}_{G_t}(d Z_t),$$
with initial condition $\tilde{Z}_0=1_{\tilde{N}}$. Before proving further results about  transformation \refeqn{equation_semimartingales_change2}, we
 show that the semimartingales change \refeqn{equation_semimartingales_change1} is a particular case of
\refeqn{equation_semimartingales_change2}. In fact, for $\tilde{N}=N=\mathbb{R}^n$, any map $\Xi_{\cdot}:\mathbb{R}^n \times \mathcal{G}
\rightarrow \mathbb{R}^n$ gives the geometrical SDE defined by the function
$${\Psi}^{\Xi}_g(\tilde{z},z)=\tilde{z}+\Xi_g(z).$$
This means that equation \refeqn{equation_semimartingales_change2} is explicitly given by the relation
\begin{equation}\label{equation_semimartingales_change3}
\begin{array}{ccl}
\tilde{Z}_t&=&\int_0^t{\partial_{z^{\alpha}}(\Xi_{G_s})(0)dZ^{\alpha}_s}+\int_0^t{\partial_{z^{\alpha}z^{\beta}}(\Xi_{G_s})(0)d[Z^{\alpha},Z^{\beta}]_s^c}+\\
&&+\sum_{0\leq s \leq t}(\Xi_{G_s}(\Delta
Z_s)-\partial_{z^{\alpha}}(\Xi_{G_s})(0)\Delta Z^{\alpha}_s).
\end{array}
\end{equation}
If $\mathcal{G}=GL(n)$ and $\Xi_g(z)=\Xi_B(z)=B \cdot z$, since both $\partial_{z^{\alpha}z^{\beta}}(\Xi_{B_t})(0)$ and $(\Xi_{G_s}(\Delta
Z_s)-\partial_{z^{\alpha}}(\Xi_{G_s})(0)\Delta Z^{\alpha}_s)$ are equal to zero, we obtain equation \refeqn{equation_semimartingales_change1}.

\begin{theorem}\label{theorem_gauge1}
Let $N,\tilde{N}$ be two Lie groups and suppose that
$(X,\tilde{Z})$ (where $\tilde{Z}$ is defined on $\tilde{N}$) is a
solution to the geometrical SDE $\Psi_{K_t}$. If
$d\tilde{Z}_t=\Xi_{G_t}(dZ_t)$,  then $(X,Z)$ is a solution to the geometrical
 SDE defined by
$$\hat{\Psi}_{k,g}(x,z)=\Psi_k(x,\Xi_g(z)).$$
\end{theorem}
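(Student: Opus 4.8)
The plan is to substitute the representation of $\tilde Z$ as a semimartingale driven by $Z$ into the integral equation \eqref{equation_manifold2} expressing that $(X,\tilde Z)$ solves $\Psi_{K_t}$, and then to check that what comes out is precisely \eqref{equation_manifold2} for the pair $(X,Z)$ and the SDE $\hat\Psi_{K_t,G_t}$, whose associated map is $\overline{\hat\Psi}_{k,g}(x,z',z)=\hat\Psi_{k,g}(x,z'\cdot z^{-1})=\Psi_k(x,\Xi_g(z'\cdot z^{-1}))$. First I would unwind the meaning of $d\tilde Z_t=\Xi_{G_t}(dZ_t)$: by definition $(\tilde Z,Z)$ solves the geometrical SDE $\Psi^{\Xi}_{G_t}$ with $\overline{\Psi^{\Xi}}_g(w,z',z)=\Xi_g(z'\cdot z^{-1})\cdot w$ and $\tilde Z_0=1_{\tilde N}$. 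Writing \eqref{equation_manifold2} for $\tilde Z$ and reading off its jump part, and using $\Xi_g(1_N)=1_{\tilde N}$, yields $\tilde Z_s=\Xi_{G_s}(\Delta Z_s)\cdot\tilde Z_{s_-}$, i.e. $\Delta\tilde Z_s=\Xi_{G_s}(\Delta Z_s)$ as elements of $\tilde N$. Setting $H^{\alpha}_{\gamma,s}=\partial_{z'^{\gamma}}(\overline{\Psi^{\Xi}}^{\alpha}_{G_s})(\tilde Z_{s_-},Z_{s_-},Z_{s_-})$, Lemma \ref{lemma_geometrical1} then gives $\Delta\tilde Z^{\alpha}_s=H^{\alpha}_{\gamma,s}\Delta Z^{\gamma}_s+\Phi^{\alpha}_s$ (equation \eqref{equation_gauge4}), $d[\tilde Z^{\alpha},\tilde Z^{\beta}]^c_s=H^{\alpha}_{\gamma,s}H^{\beta}_{\delta,s}\,d[Z^{\gamma},Z^{\delta}]^c_s$ (equation \eqref{equation_gauge2}), and $\int_0^t K_{\alpha,s}\,d\tilde Z^{\alpha}_s=\int_0^t K_{\alpha,s}H^{\alpha}_{\gamma,s}\,dZ^{\gamma}_s+\sum_{0\le s\le t}K_{\alpha,s}\Phi^{\alpha}_s$ (equation \eqref{equation_gauge3}), where $\Phi^{\alpha}_s$ is the corresponding jump compensator; the second-order term $\tfrac12\partial_{z'^{\gamma}z'^{\delta}}(\overline{\Psi^{\Xi}}^{\alpha}_{G_s})\,d[Z^{\gamma},Z^{\delta}]^c_s$ from the middle term of \eqref{equation_manifold2} for $\tilde Z$ also enters the $d\tilde Z$-integral.

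Next I would plug these into the three groups of terms of \eqref{equation_manifold2} for $(X,\tilde Z)$ and $\Psi_{K_t}$. In the jump sum, $\overline{\Psi}^i_{K_s}(X_{s_-},\tilde Z_s,\tilde Z_{s_-})=\Psi^i_{K_s}(X_{s_-},\Delta\tilde Z_s)=\Psi^i_{K_s}(X_{s_-},\Xi_{G_s}(\Delta Z_s))=\overline{\hat\Psi}^i_{K_s,G_s}(X_{s_-},Z_s,Z_{s_-})$ and $\overline{\Psi}^i_{K_s}(X_{s_-},\tilde Z_{s_-},\tilde Z_{s_-})=X^i_{s_-}=\overline{\hat\Psi}^i_{K_s,G_s}(X_{s_-},Z_{s_-},Z_{s_-})$, so the two ``value'' terms are already in the form required for $\hat\Psi$. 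The remaining pieces — the first-order compensator $\partial_{z'^{\alpha}}(\overline{\Psi}^i_{K_s})(X_{s_-},\tilde Z_{s_-},\tilde Z_{s_-})\Delta\tilde Z^{\alpha}_s$ together with the jump contributions of the $d\tilde Z$-integral coming from \eqref{equation_gauge4}--\eqref{equation_gauge3} — must be reassembled into $\partial_{z'^{\gamma}}(\overline{\hat\Psi}^i_{K_s,G_s})(X_{s_-},Z_{s_-},Z_{s_-})\Delta Z^{\gamma}_s$ plus an exact cancellation. For the continuous part one uses the identity $\overline{\hat\Psi}^i_{k,g}(x,z',z)=\overline{\Psi}^i_k\big(x,\overline{\Psi^{\Xi}}_g(\tilde Z_{s_-},z',z),\tilde Z_{s_-}\big)$ (valid as functions of $z'$, the dependence on $\tilde Z_{s_-}$ cancelling): by the chain rule the martingale/finite-variation part of the $d\tilde Z$-integral gives $\partial_{z'^{\alpha}}(\overline{\Psi}^i_{K_s})\,H^{\alpha}_{\gamma,s}\,dZ^{\gamma}_s=\partial_{z'^{\gamma}}(\overline{\hat\Psi}^i_{K_s,G_s})\,dZ^{\gamma}_s$, while the two second-order sources, namely $\partial_{z'^{\alpha}}(\overline{\Psi}^i_{K_s})\cdot\tfrac12\partial_{z'^{\gamma}z'^{\delta}}(\overline{\Psi^{\Xi}}^{\alpha}_{G_s})$ from the Itô correction inside $d\tilde Z$ and $\tfrac12\partial_{z'^{\alpha}z'^{\beta}}(\overline{\Psi}^i_{K_s})\,H^{\alpha}_{\gamma,s}H^{\beta}_{\delta,s}$ from $d[\tilde Z,\tilde Z]^c$, add up by the second-order chain rule to $\tfrac12\partial_{z'^{\gamma}z'^{\delta}}(\overline{\hat\Psi}^i_{K_s,G_s})$, all evaluated at $(X_{s_-},Z_{s_-},Z_{s_-})$. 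Collecting these identities turns \eqref{equation_manifold2} for $(X,\tilde Z)$ and $\Psi_{K_t}$ into \eqref{equation_manifold2} for $(X,Z)$ and $\hat\Psi_{K_t,G_t}$, which is the claim.

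The hard part is precisely this last reassembly: one must verify that the several sources of jump-compensation terms and of quadratic-variation terms recombine, through the chain rule for $\overline{\Psi}\circ\overline{\Psi^{\Xi}}$ evaluated on the diagonal (where $\Xi_g(1_N)=1_{\tilde N}$ makes the factors involving $\tilde Z_{s_-}$ drop out), into exactly the compensator and Itô term dictated by $\overline{\hat\Psi}$, with nothing left over. Everything else is substitution and index bookkeeping, and the computation runs in parallel with the second case of the proof of Theorem \ref{theorem_geometrical1}, so the routine parts can be organized by reference to it.
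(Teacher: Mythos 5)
Your argument is correct, but it is worth noting that the paper itself does not prove Theorem \ref{theorem_gauge1} at all: it simply defers to Theorem 2.8 of \cite{Annals}. What you supply is therefore a self-contained verification, and it runs along the right lines, essentially mirroring the second case of the proof of Theorem \ref{theorem_geometrical1}: you read the relation $d\tilde{Z}_t=\Xi_{G_t}(dZ_t)$ as the statement that $(\tilde{Z},Z)$ solves the geometrical SDE $\Psi^{\Xi}_{G_t}$, extract from \refeqn{equation_manifold2} the representation of $\tilde{Z}$ as an integral against $Z$ and $[Z,Z]^c$ plus a jump sum, feed this into \refeqn{equation_manifold2} for $(X,\tilde{Z})$ via Lemma \ref{lemma_geometrical1} (formulas \refeqn{equation_gauge4}--\refeqn{equation_gauge3}, with Remark \ref{remark_geometrical1} guaranteeing the admissibility of the jump functional), and then recombine everything through the identity $\overline{\hat\Psi}^i_{k,g}(x,z',z)=\overline{\Psi}^i_k(x,\overline{\Psi^{\Xi}}_g(\tilde{Z}_{s_-},z',z),\tilde{Z}_{s_-})$ together with the first- and second-order chain rule evaluated on the diagonal $z'=z=Z_{s_-}$, where $\Xi_g(1_N)=1_{\tilde N}$ forces the inner map to send $Z_{s_-}$ to $\tilde{Z}_{s_-}$ so that all evaluation points match. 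The jump bookkeeping also closes: the compensator pieces from \refeqn{equation_gauge4} and \refeqn{equation_gauge3} cancel against each other, leaving exactly $\overline{\hat\Psi}^i(X_{s_-},Z_s,Z_{s_-})-\overline{\hat\Psi}^i(X_{s_-},Z_{s_-},Z_{s_-})-\partial_{z'^{\gamma}}(\overline{\hat\Psi}^i)(X_{s_-},Z_{s_-},Z_{s_-})\Delta Z^{\gamma}_s$, since $\overline{\Psi}^i_{K_s}(X_{s_-},\tilde Z_s,\tilde Z_{s_-})=\Psi^i_{K_s}(X_{s_-},\Xi_{G_s}(\Delta Z_s))$. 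Two pieces of routine bookkeeping you should still record to make the write-up complete: the pair $(K_t,G_t)$ is a predictable locally bounded process with values in $\mathcal{K}\times\mathcal{G}$, so $\hat\Psi_{K_t,G_t}$ is a legitimate geometrical SDE (note also $\hat\Psi_{k,g}(x,1_N)=\Psi_k(x,\Xi_g(1_N))=x$), and the integrands $\partial_{z'^{\alpha}}(\overline{\Psi}^i_{K_s})(X_{s_-},\tilde Z_{s_-},\tilde Z_{s_-})$ used when invoking \refeqn{equation_gauge3} are left limits of c\acc{a}dl\acc{a}g adapted processes, hence predictable and locally bounded as the lemma requires. With these remarks your proof stands on its own, which is arguably an improvement on the paper's bare citation.
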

\begin{proof}
For the proof see Theorem 2.8 in \cite{Annals}.
${}\hfill$ \end{proof}
\begin{corollary}\label{corollary_gauge1}
Suppose that $\mathcal{G}$ is a Lie group and  $\Xi$
is a Lie group action. If $(X,Z)$ is a solution to the
geometrical SDE $\Psi_{K_t}$, then $(X,\tilde{Z})$ is a solution to
the geometrical SDE defined by
$$\hat{\Psi}_{k,g}(x,z)=\Psi_k(x,\Xi_{g^{-1}}(z)).$$
\end{corollary}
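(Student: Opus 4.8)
The plan is to deduce the Corollary directly from Theorem \ref{theorem_gauge1} by exploiting the group structure of the action $\Xi$. The key observation is that when $\Xi$ is a genuine Lie group action of $\mathcal{G}$ on $N$, the transformation $d\tilde{Z}_t = \Xi_{G_t}(dZ_t)$ is invertible, with inverse given by the pointwise-inverse gauge element: one expects that if $d\tilde{Z}_t = \Xi_{G_t}(dZ_t)$ then $dZ_t = \Xi_{G_t^{-1}}(d\tilde{Z}_t)$, where $G_t^{-1}$ denotes the pointwise inverse in $\mathcal{G}$ (which is again a locally bounded predictable process since inversion in a Lie group is smooth and $G$ is locally bounded). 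This reciprocity is precisely what makes the roles of $Z$ and $\tilde{Z}$ symmetric.

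First I would set $H_t := G_t^{-1}$ and check that $H$ is an admissible (locally bounded, predictable, $\mathcal{G}$-valued) gauge process. Next I would verify the inversion claim: starting from $d\tilde Z_t = \Xi_{G_t}(dZ_t)$, apply the gauge transformation $\Xi_{H_t}$ to $\tilde{Z}$, i.e. consider the semimartingale $Z'$ solving $dZ'_t = \Xi_{H_t}(d\tilde{Z}_t)$, and show $Z' = Z$. Since $\Xi$ is a Lie group action, $\Xi_{H_t}\circ \Xi_{G_t} = \Xi_{H_t \cdot G_t} = \Xi_{1_\mathcal{G}} = \mathrm{id}_N$, so composing the two transformations should give back $dZ_t$; making this rigorous amounts to composing the defining SDEs \eqref{equation_semimartingales_change2} via the geometrical-SDE composition (Theorem \ref{theorem_gauge1} applied with the two successive gauge maps, or directly the fact that $\Psi^{\Xi}_{H_t}$ followed by $\Psi^{\Xi}_{G_t}$ is the geometrical SDE of $\Xi_{H_t}\circ\Xi_{G_t} = \mathrm{id}$, whose solution with the correct initial condition is the identity process). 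Having established $dZ_t = \Xi_{G_t^{-1}}(d\tilde{Z}_t)$, I would then apply Theorem \ref{theorem_gauge1} verbatim but with the roles of $Z$ and $\tilde Z$ interchanged and with gauge process $G_t^{-1}$: given that $(X,Z)$ solves $\Psi_{K_t}$ and that $dZ_t = \Xi_{G_t^{-1}}(d\tilde Z_t)$, Theorem \ref{theorem_gauge1} yields that $(X,\tilde Z)$ solves the geometrical SDE $\hat\Psi_{k,g}(x,z) = \Psi_k(x, \Xi_{g^{-1}}(z))$, which is exactly the asserted formula.

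The main obstacle I anticipate is justifying the inversion identity $dZ_t = \Xi_{G_t^{-1}}(d\tilde Z_t)$ rigorously at the level of the integral equation \eqref{equation_semimartingales_change3} (or its manifold analogue \eqref{equation_manifold2}): one must argue that composing the two geometrical SDEs associated with $\Xi_{G_t}$ and $\Xi_{G_t^{-1}}$ produces the geometrical SDE associated with the identity map $\Xi_{1_{\mathcal G}}=\mathrm{id}_N$, and that the unique solution of the latter with initial condition $Z_0$ is $Z$ itself. This is essentially a uniqueness statement for geometrical SDEs together with the chain-rule computation already carried out in the proof of Theorem \ref{theorem_geometrical1}; once the predictability and local boundedness of $G^{-1}$ are in hand, the rest is a routine bookkeeping of the It\^o/Cohen integral terms, and the conclusion then follows immediately from Theorem \ref{theorem_gauge1}.
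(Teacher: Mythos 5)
Your proposal is correct and follows essentially the same route as the paper: the paper likewise defines $d\hat{Z}_t=\Xi_{G_t^{-1}}(d\tilde{Z}_t)$, uses Theorem \ref{theorem_gauge1} together with the group-action property $\Xi_{G_t^{-1}}\circ\Xi_{G_t}=\Xi_{1_{\mathcal{G}}}=\mathrm{id}_N$ to conclude $\hat{Z}=Z$, i.e. $dZ_t=\Xi_{G_t^{-1}}(d\tilde{Z}_t)$, and then applies Theorem \ref{theorem_gauge1} with the roles of $Z$ and $\tilde{Z}$ interchanged and gauge process $G_t^{-1}$. Your extra remarks on the predictability and local boundedness of $G^{-1}$ and on uniqueness of the composed geometrical SDE are exactly the details the paper leaves implicit.
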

\begin{proof}
The proof is an application of Theorem \ref{theorem_gauge1} and of the fact that $dZ_t=\Xi_{G_t^{-1}}(d\tilde{Z}_t)$. Indeed, defining
$d\hat{Z}_t=\Xi_{G_t^{-1}}(d\tilde{Z}_t)$, by  Theorem \ref{theorem_gauge1} we have that $d\hat{Z}_t=\Xi_{G_t^{-1}} \circ \Xi_{G_t}
(dZ_t)=\Xi_{1_{\mathcal{G}}}(dZ_t)=dZ_t$. The corollary follows directly from Theorem \ref{theorem_gauge1}. ${}\hfill$ \end{proof}

\subsection{Definition of gauge symmetries}

Let us consider the following well known
property of  Brownian motion. Let $B_t: \Omega \times [0,T]
\rightarrow SO(n)$ be a predictable process taking values in the
Lie group of special orthogonal matrices and consider a Brownian
motion $Z$ on $\mathbb{R}^n$. Then the process defined by
\begin{equation}\label{equation_gauge1}
Z'^{\alpha}_t=\int_0^t{B^{\alpha}_{\beta,s}dZ^{\beta}_s}
\end{equation}
is a new $n$ dimensional Brownian motion.\\
In \cite{Annals}  the  generalization of this property to the case in which $Z$ is a c\acc{a}dl\acc{a}g semimartingale in a Lie group $N$ has been proposed (see
\cite{Privault2012} for a similar result about Poisson measures). In the simple case $N=\mathbb{R}^n$,  replacing the Brownian motion by a
general semimartingale, the invariance property \refeqn{equation_gauge1}  is no longer true.

\begin{definition}\label{definition_gauge}
Let $Z$ be a semimartingale on a Lie group $N$ with respect to the filtration $\mathcal{F}_t$. Given a Lie group $\mathcal{G}$  and $ g\in \mathcal{G}$, we say that $Z$ admits $\mathcal{G}$, with action $\Xi_g$ and with respect to the filtration $\mathcal{F}_t$,  as \emph{gauge symmetry group} if, for any $\mathcal{F}_t$-predictable locally bounded process $G_t$ taking values in $\mathcal{G}$, the semimartingale $\tilde{Z}$ solution to the equation $d\tilde{Z}_t=\Xi_{G_t}(dZ_t)$ has the same law as $Z$.
\end{definition}

In the following we consider the filtration $\mathcal{F}_t$ of the probability space $(\Omega,\mathcal{F},\mathbb{P})$ as given and we do not mention it when it is  not strictly necessary.\\
Since a necessary condition for Definition \ref{definition_gauge} is that the solution  $\tilde{Z}$ to the  equation
$d\tilde{Z}_t=\Xi_{G_t}(dZ_t)$, is defined for all times, we are interested in characterizing  SDEs of the previous form with explosion time equal to $+ \infty$ for any $G_t$.
The following proposition provides a sufficient condition on the group $N$ so that, for any action $\Xi_g$, the corresponding geometrical  SDE has indeed
explosion time $+ \infty$.

In \cite{Annals}, in order to provide some explicit methods for checking when a semimartingale on a Lie group $N$ admits $\mathcal{G}$, with action $\Xi_g$, as gauge symmetry group,
the concept of characteristics of a semimartingale on a Lie group has been introduced. This allows to formulate  a condition, equivalent to Definition \ref{definition_gauge}, that can be
directly applied to L\'evy processes on Lie groups providing a completely deterministic method  to verify Definition \ref{definition_gauge} in this
case. Moreover, one can use this reformulation  to provide some examples of non-Markovian processes admitting  gauge
symmetry groups (see \cite{Annals}).\\

\subsection{Gauge symmetries of  L\'evy processes}\label{subsection_gauge_Levy}

In this section we recall the general theory of gauge symmetries of L\'evy processes on Lie groups as described in \cite{Annals}.
In order to fix notations, given  $n$ generators $Y_1,...,Y_n$ of right-invariant vector fields on $N$, we introduce a set of functions $h^1,...,h^n$ (called \emph{truncated
functions related to $Y_1,...,Y_n$}) which are measurable, bounded, smooth in a neighborhood of
the identity $1_N$, with compact support and such that $h^{\alpha}(1_N)=0$ and $Y_{\alpha}(h^{\beta})(1_N)=\delta^{\beta}_{\alpha}$ (the existence of
these functions is proved, for example, in
\cite{Hunt1956} and they can be chosen to be equal to a set of canonical coordinates in a neighborhood of $1_N$).

\begin{definition}\label{definition_Levy}
Let $A_0$ be an $n\times n$
 symmetric non-negative matrix, $b_0$ be an $n$-dimensional vector and $\nu_0$ be a positive measure such that $\int_N{(h^{\alpha}(z))^2\nu_0(dz)}<+\infty$ and
$\int_N{f(z)\nu_0(dz)}<+\infty $ for any smooth and bounded function $f \in \cinf(N)$ which is identically zero in a neighborhood of $1_N$.
A c\acc{a}dl\acc{a}g semimartingale $Z$ on a Lie group $N$ is called a L\'evy process with characteristics $(b_0,A_0,\nu_0)$ if, for any $C^2$ function $f$ defined on
 $N$ and with compact support, we have that
\begin{multline*}
f(Z_t)-\int_0^t{L(f)(Z_{s_-})ds}:=\\f(Z_t)-\left( b_0^{\alpha}Y_{\alpha}(f)(z)+A^{\alpha\beta}_0
Y_{\alpha}(Y_{\beta}(f))(z)
+\int_N{(f(z' \cdot
z^{-1})-f(z)-h^{\alpha}(z')Y_{\alpha}(f)(z))\nu_0(dz')}\right)
\end{multline*}
is a local martingales.
\end{definition}

\begin{remark}
The characteristics of a  L\'evy process introduced in Definition \ref{definition_Levy} are the same as those of Subsection \ref{subsubsection_Levy}.
Furthermore if $Z$ is a L\'evy process, then $Z$ is also an homogeneous Markov process. Indeed the operator $L$ with domain given by the set of smooth functions with compact support is the generator of the L\'evy process $Z$.
\end{remark}

Since the map $\Xi_g$ is such that $\Xi_g(1_N)=1_N$, using the identification of the Lie algebra  generated by the right-invariant vector
fields $Y_1,...,Y_n$ with $\mathfrak{n}=T_{1_N}N$, we can define a linear maps $\Upsilon_g:\mathfrak{n} \rightarrow \mathfrak{n}$  in the following way
\begin{eqnarray*}
\Upsilon_g(Y)&=&\partial_a(\Xi_g \circ \Phi_a)(1_N)|_{a=0},
\end{eqnarray*}
where $Y$ is an invariant vector field with associated flow $\Phi_a:N
\rightarrow N$. Furthermore, in order to simplify the following treatment, hereafter we require that, for any pair of flows $\Phi_a, \Phi'_b$
$$ \partial_b(\partial_a(\Xi_g \circ \Phi_a \circ \Phi'_b))(1_N)|_{a=0,b=0}=0. $$
This condition (which is always satisfied for the linear actions considered in this paper) can be relaxed: see \cite{Annals} for the general theory.\\
Since $Y(f)(z)=\partial_a(f\circ \Phi_a)(z)|_{a=0}$ and the flow of right-invariant vector fields commutes with the right multiplication, we can exploit
the definition of $\Psi^{\Xi}_g(\tilde{z},z)=\Xi_g(z) \cdot \tilde{z}$ and $\overline{\Psi}^{\Xi_g}(\tilde{z},z',z)=\Xi_g(z' \cdot z^{-1}) \cdot
\tilde{z}$ to prove that, for any $Y,Y'$ right-invariant vector fields,
\begin{eqnarray}
&\begin{array}{c}
Y^z(f \circ \Psi^{\Xi}_g)(\tilde{z},1_N)=Y^{z'}(f \circ \overline{\Psi}^{\Xi}_g)(\tilde{z},z,z)=(\Upsilon_g(Y))(f)(\tilde{z})
\end{array}&\label{equation_upsilon}\\
&\begin{array}{c}
Y'^z(Y^z(f \circ \Psi^{\Xi}_g))(\tilde{z},1_N)=Y'^{z'}(Y^{z'}(f \circ \overline{\Psi}^{\Xi}_g))(\tilde{z},z,z)=(\Upsilon_g(Y'))[(\Upsilon_g(Y))(f)](\tilde{z})
\end{array}&\label{equation_O}
\end{eqnarray}
where the superscript $\cdot^z,\cdot^{z'}$ means that the vector fields $Y,Y'$ apply to the $z,z'$ variables respectively.\\
The following theorem gives a complete characterization of gauge invariant L\'evy processes.

\begin{theorem}\label{theorem_characteristic3}
If a semimartingale $Z$ is a L\'evy process with characteristics $(b_0, A_0, \nu_0)$ such that its law is uniquely determined by its
characteristics, then $Z$ admits $\mathcal{G}$ as  gauge symmetry group  with action $\Xi_g$ if and only if, for any $g \in
\mathcal{G}$,
\begin{eqnarray}
b^{\alpha}_0&=&\Upsilon^{\alpha}_{g,\beta}b_0^{\beta}+\int_N{(h^{\alpha}(z')-h^{\beta}(\Xi_{g^{-1}}(z'))\Upsilon^{\alpha}_{g,\beta})\nu_0(dz')}\label{equation_characteristic5}\\
A^{\alpha\beta}_0&=&\Upsilon^{\alpha}_{g,\gamma}\Upsilon^{\beta}_{g,\delta}A^{\gamma\delta}_0\label{equation_characteristic6}\\
\nu_0&=&\Xi_{g*}(\nu_0).\label{equation_characteristic7}
\end{eqnarray}
\end{theorem}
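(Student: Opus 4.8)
The strategy is to reduce the statement to a comparison of the characteristics (in the Lie-group sense of Definition \ref{definition_Levy}) of the original process $Z$ and the transformed process $\tilde Z$ defined by $d\tilde Z_t=\Xi_{G_t}(dZ_t)$. Since by hypothesis the law of a L\'evy process with a given characteristic triplet is uniquely determined by that triplet, the equality of laws required in Definition \ref{definition_gauge} is equivalent to: for every predictable locally bounded $\mathcal{G}$-valued $G_t$, the process $\tilde Z$ is again a L\'evy process and its triplet coincides with $(b_0,A_0,\nu_0)$. So the plan is first to compute, for a \emph{constant} choice $G_t\equiv g$, the triplet of $\tilde Z$ in terms of $(b_0,A_0,\nu_0)$ and the linear map $\Upsilon_g$, obtaining exactly the three right-hand sides in \refeqn{equation_characteristic5}--\refeqn{equation_characteristic7}; this gives the ``if'' direction together with the necessity of the three identities when one restricts to constant $G$. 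Then one argues that the constant case already forces the general (predictable) case, so nothing more is needed for the ``only if'' direction.

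For the computation with $G_t\equiv g$: apply the defining integral equation \refeqn{equation_manifold2} for the geometrical SDE $\Psi^{\Xi}_g(\tilde z,z)=\Xi_g(z)\cdot\tilde z$ driven by $Z$, i.e. use \refeqn{equation_semimartingales_change3} in the group setting. For a compactly supported $C^2$ function $f$ on $\tilde N=N$, write $f(\tilde Z_t)$ via It\^o's formula (Lemma \ref{lemma_Ito}) and substitute the expression for $d\tilde Z_t$; the first-order and second-order terms get transported through $\Xi_g$ by exactly the identities \refeqn{equation_upsilon} and \refeqn{equation_O}, which convert right-invariant derivatives in the $z$-variable into the action of $\Upsilon_g(Y_\alpha)$. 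One then identifies the drift, diffusion and jump parts of $f(\tilde Z_t)-\int_0^t L'(f)(\tilde Z_{s^-})\,ds$ against the generator $L$ applied to $Z$: the truncation functions have to be handled carefully because $h^\alpha$ is defined relative to $Y_1,\dots,Y_n$ but the jumps of $\tilde Z$ are $\Xi_g(\Delta Z_s)$, so one must add and subtract $h^\beta(\Xi_{g^{-1}}(z'))\Upsilon^\alpha_{g,\beta}$ — this is precisely the correction term appearing in \refeqn{equation_characteristic5}. Matching the Gaussian part gives \refeqn{equation_characteristic6} via the bilinear transformation $A\mapsto \Upsilon_g A\Upsilon_g^{\top}$, and matching the integral (jump) part gives $\nu_0=\Xi_{g*}\nu_0$, i.e. \refeqn{equation_characteristic7}, by the change-of-variables $z'\mapsto\Xi_g(z')$ in the L\'evy measure integral. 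Since $Z$ is a L\'evy process its characteristics are deterministic and constant in time, hence so are those just computed for $\tilde Z$, confirming $\tilde Z$ is again L\'evy.

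For the passage from constant $G$ to general predictable $G_t$ (needed for ``if''), the point is that the characteristics of $\tilde Z$ are computed \emph{pathwise} from those of $Z$ through $\Upsilon_{G_t}$ and $\Xi_{G_t}$; if the three identities hold for every $g\in\mathcal G$ then at each time the transformed infinitesimal characteristics equal $(b_0,A_0,\nu_0)$ regardless of the current value $G_t(\omega)$, so $\tilde Z$ has the constant deterministic triplet $(b_0,A_0,\nu_0)$ and the uniqueness hypothesis gives $\tilde Z\stackrel{d}{=}Z$. Conversely, for ``only if'' one simply takes $G_t\equiv g$ constant, which is admissible, and reads off the three identities from the equality of triplets. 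The main obstacle I anticipate is the bookkeeping around the truncation functions $h^\alpha$: one must verify that the auxiliary condition $\partial_b\partial_a(\Xi_g\circ\Phi_a\circ\Phi'_b)(1_N)|_{0,0}=0$ is exactly what is needed so that the second-order It\^o term produces $\Upsilon_g(Y_\alpha)\Upsilon_g(Y_\beta)$ with no extra first-order remainder (so that \refeqn{equation_O} is legitimate), and that the jump sum reorganizes into an $L$-type expression with the $\Xi_g$-pushed truncation, which is where the somewhat unexpected term in \refeqn{equation_characteristic5} comes from; everything else is a routine identification of terms in It\^o's formula against Definition \ref{definition_Levy}. The detailed version of this argument is carried out in \cite{Annals}, to which the proof refers.
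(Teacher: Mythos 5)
Your plan is correct and is essentially the argument the paper relies on: the paper itself defers this proof to \cite{Annals}, and the route there (mirrored in the in-paper proof of the time-symmetry analogue, Theorem \ref{theorem_time3}) is exactly yours --- compute the characteristics of $\tilde Z$ via It\^o's formula and the identities \refeqn{equation_upsilon}--\refeqn{equation_O}, read off necessity by taking $G_t\equiv g$ constant and comparing triplets (including the truncation correction that produces the extra integral in \refeqn{equation_characteristic5}), and obtain sufficiency for general predictable locally bounded $G$ from the transformation formula for the stochastic characteristics together with the hypothesis that the law is uniquely determined by the triplet. The only blemish is purely expository: in your opening paragraph the labels ``if''/``only if'' are swapped relative to your later, correct assignment (the constant-$G$ computation gives necessity, while the passage to general predictable $G$ is what the sufficiency direction requires), so the wording there should be fixed.
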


\begin{remark}
It is important to recall that  the law of a L\'evy process
 on the Lie group $N=\mathbb{R}^n$ is always
uniquely determined by its characteristics (see \cite{Jacod2003},
Chapter II, Theorem 4.15 and following comments).
\end{remark}

In the following we provide an explicit example of use of the previous theorem, showing that our construction is a generalization of the Brownian motion case. For a more detailed discussion see \cite{Annals}.\\
Consider $N=\mathbb{R}^{n}$
and the L\'evy process with  generator given by
$$L(f)(z)=\sum_{\alpha=1}^n\frac{D}{2}\partial_{z^{\alpha}z^{\alpha}}(f)(z)+\int_{N}{(f(z+z')-f(z)-I_{|z'|<1}(z')z^{\alpha}\partial_{z^{\alpha}}(f))F(|z'|)dz'},$$
where $D \in \mathbb{R}_+$, $|\cdot |$ is the standard norm of $\mathbb{R}^n$ and $F:\mathbb{R}_+  \rightarrow \mathbb{R}_+$ is a measurable
locally bounded function such that $\int_1^{\infty}{F(r)r^{n-1}dr} < + \infty$ and $\int_0^1{F(r)r^{n+1}} < +\infty$. When $B \in SO(n)$  we have
$$\Xi_{B}(z)=B \cdot z.$$
By definition, $B$ respects the standard metric in $\mathbb{R}^n$ and so
$$\Xi_{B*}(F(|z|)dz)=\det(B)F(|B^T \cdot z|)dz=F(|z|)dz.$$
Furthermore, since $\Upsilon^{\alpha}_{\beta}=B^{\alpha}_{\beta}$ we have
\begin{eqnarray*}
&\int_N{(z^{\alpha}I_{|z|<1}(z)-\Xi^{\beta}_{B^{-1}}(z){\Upsilon}^{\alpha}_{\beta}I_{|\Xi_{B^{-1}}(z)|<1}(z))F(|z|)dz}=&\\
&=\int_N{(z^{\alpha}I_{|z|<1}(z)-(B^{-1})^{\beta}_{\gamma}B^{\alpha}_{\beta}I_{|z|<1}(z)z^{\gamma})F(|z|)dz}=0.&
\end{eqnarray*}
Hence, by Theorem \ref{theorem_characteristic3}, $Z$ admits $SO(n)$ as a gauge symmetry group with action $\Xi_B$.\\
In this case  the equation $dZ_t'=\Xi_{B_t}(dZ_t)$ is simply
$$Z'^{\alpha}_t=\int_0^t{B^{\alpha}_{\beta,s} dZ^{\beta}_s}.$$

\subsection{Gauge symmetries of discrete time independent increments processes}

In this section we focus on discrete time semimartingales with independent increments. We use here the convention adopted in Section \ref{subsection_iterated}, where we identify a discrete time process with a pure jump semimartingale having jumps at the deterministic time $t=0,1,2,...,n,...$. \\
\begin{definition}
The discrete time process $Z=(Z_0,Z_1,...,Z_n,...)$ taking values on the Lie group $N$  is a discrete time semimartingales with independent increments if $Z_n \cdot Z_{n-1}^{-1}$ is independent of $Z_0,Z_1,...,Z_n$.
\end{definition}

The law of discrete time semimartingales with independent increments is completely characterized by the measure $\nu(dz,dt)$ on $\mathbb{R}_+ \times N$ given by
$$\nu(dz,dt)=\sum_{n \in \mathbb{N}} \delta_n(dt) \mu_n(dz),$$
where $\mu_n(dz)$ is the law of the jump $\Delta Z_n=Z_n \cdot Z_{n-1}^{-1}$.\\
Therefore, a semimartingale $Z'$ has the same law of $Z$ if and only if, for any bounded continuous function on $N$,
$$f(Z'_t)-f(Z'_t)=\int_{0}^t{\int_N{f(z\cdot Z'^{-1}_{s_-})\nu(dz,dt)}}.$$
Using the previous description of the law of $Z$ we have the following theorem.
\begin{theorem}\label{theor_discrete_time}
If a semimartingale $Z$ is a discrete time semimartingale with independent increments, then $Z$ admits $\mathcal{G}$ as  gauge symmetry group  with action $\Xi_g$ if and only if, for any $g \in
\mathcal{G}$,
\begin{eqnarray*}
\nu&=&\Xi_{g*}(\nu).
\end{eqnarray*}
\end{theorem}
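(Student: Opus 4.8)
The plan is to exploit the fact that, for a discrete-time driver, the whole gauge transformation collapses to an elementary recursion, after which both directions reduce to a comparison of marginal laws. First I would record, following Subsection \ref{subsection_iterated}, what $d\tilde Z_t=\Xi_{G_t}(dZ_t)$ means here: by definition $\tilde Z$ is the solution, with $\tilde Z_0=1_N$, of the geometrical SDE $d\tilde Z_t=\Psi^{\Xi}_{G_t}(dZ_t)$ associated with $\Psi^{\Xi}_g(x,z)=\Xi_g(z)\cdot x$. Since $Z$ is a pure jump process with finitely many jumps on compact intervals, equation \eqref{equation_manifold2} reduces, exactly as it does to \eqref{equation_difference} in Subsection \ref{subsection_iterated}, to
\begin{equation*}
\tilde Z_l=\Psi^{\Xi}_{G_l}(\tilde Z_{l-1},\Delta Z_l)=\Xi_{G_l}(\Delta Z_l)\cdot\tilde Z_{l-1},\qquad l\ge 1,
\end{equation*}
where one uses $\overline{\Psi}^{\Xi}_g(x,z,z)=\Xi_g(1_N)\cdot x=x$; in particular $\tilde Z$ is again a discrete-time semimartingale, defined for all times, with $\tilde Z_0=1_N=Z_0$ and $\Delta\tilde Z_l=\Xi_{G_l}(\Delta Z_l)$.

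For the implication ``$Z$ gauge symmetric $\Rightarrow\Xi_{g*}(\nu)=\nu$'' I would specialize the recursion to the constant (hence predictable and locally bounded) process $G_t\equiv g$: then $\Delta\tilde Z_l=\Xi_g(\Delta Z_l)$ has law $(\Xi_g)_*\mu_l$, and since by Definition \ref{definition_gauge} $\tilde Z$ has the same law as $Z$, their $l$-th jumps have the same law, whence $(\Xi_g)_*\mu_l=\mu_l$ for every $l\ge 1$. Letting $g$ range over $\mathcal G$, this is precisely $\Xi_{g*}(\nu)=\nu$ for all $g\in\mathcal G$.

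For the converse, assume $(\Xi_g)_*\mu_l=\mu_l$ for all $g\in\mathcal G$ and $l\ge 1$, and fix an arbitrary predictable locally bounded $\mathcal G$-valued process $G$. It suffices to show that $\tilde Z$ is again a discrete-time process with independent increments, with the same associated measure $\nu$ and the same (deterministic) initial value $1_N$ as $Z$, for then $\tilde Z\stackrel{\mathrm{law}}{=}Z$ by the description of such laws recalled just before the statement. The key computation is that, $G$ being predictable, $G_l$ is $\mathcal F_{l^-}$-measurable, while $\Delta Z_l$ is independent of $\mathcal F_{l^-}$ with law $\mu_l$; hence for every bounded measurable $\phi$ on $N$,
\begin{equation*}
\mathbb E\big[\phi(\Delta\tilde Z_l)\mid\mathcal F_{l^-}\big]=\Big(\int_N\phi(\Xi_g(z))\,\mu_l(dz)\Big)\Big|_{g=G_l}=\int_N\phi\,d\mu_l,
\end{equation*}
the last equality being the hypothesis $(\Xi_g)_*\mu_l=\mu_l$. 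As the right-hand side is deterministic, $\Delta\tilde Z_l$ is independent of $\mathcal F_{l^-}$, hence of $\sigma(\tilde Z_0,\dots,\tilde Z_{l-1})\subseteq\mathcal F_{l^-}$, with law $\mu_l$; an induction on $l$ starting from the deterministic $\tilde Z_0=1_N$ then yields the claim.

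I do not expect a genuine obstacle here: the statement is essentially a bookkeeping consequence of the discrete-time reduction of geometrical SDEs together with the independence of the increments. The only step requiring a little care is the conditioning argument in the converse, namely that the predictability of $G$ allows one to ``freeze'' the value $G_l(\omega)$ and apply the pushforward invariance pointwise in $\omega$; and, more pedantically, one should keep the conventions of Subsection \ref{subsection_iterated} straight --- in particular that $Z_0=1_N$, so that the law of $Z$ (and of $\tilde Z$) is determined by $\nu$ alone.
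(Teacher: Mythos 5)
Your argument is correct, but it follows a genuinely different route from the paper: the paper disposes of this theorem in one line, by observing that it is a special case of the general characterization of gauge symmetry groups via (stochastic) characteristics proved in the companion paper \cite{Annals} (Theorem 4.9 there), whereas you give a direct, self-contained proof in the discrete-time setting. Your reduction of $d\tilde Z_t=\Xi_{G_t}(dZ_t)$ to the recursion $\tilde Z_l=\Xi_{G_l}(\Delta Z_l)\cdot\tilde Z_{l-1}$ is exactly the specialization of \eqref{equation_manifold2} used in Subsection \ref{subsection_iterated} for pure-jump, finite-activity drivers, and from there the two implications are handled as the paper's general machinery would, but in elementary terms: necessity by taking $G\equiv g$ constant and comparing the laws of the $l$-th jumps, sufficiency by the freezing/conditioning argument (predictability of $G$ makes $G_l$ measurable with respect to $\mathcal{F}_{l^-}$, the increment $\Delta Z_l$ is independent of $\mathcal{F}_{l^-}$, and $(\Xi_g)_*\mu_l=\mu_l$ makes the conditional law of $\Delta\tilde Z_l$ deterministic and equal to $\mu_l$), followed by induction and the fact that $\nu$ (equivalently, the family $(\mu_l)$) together with the deterministic initial point determines the law. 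What your approach buys is transparency and independence from \cite{Annals}; what the paper's citation buys is uniformity (the same criterion covers general semimartingales through their characteristic triplets, with the density-of-elementary-processes argument done once and for all). Two small points you should make explicit if this were written out: the paper's phrase ``$Z_n\cdot Z_{n-1}^{-1}$ is independent of $Z_0,\dots,Z_n$'' is to be read as independence from $Z_0,\dots,Z_{n-1}$, and your conditioning step uses independence of $\Delta Z_l$ from $\mathcal{F}_{l^-}$, which is automatic for the natural filtration (the one relevant here) since $Z$ is constant on $[l-1,l)$; and, as you note, the normalization $\tilde Z_0=1_N=Z_0$ is needed so that $\nu$ alone pins down the law.
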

\begin{proof}
This is a particular case of Theorem 4.9 of \cite{Annals}.
${}\hfill$\end{proof}\\

An equivalent reformulation of the previous theorem is the following: $Z$ admits discrete time semimartingales with independent increments if and only if
$$\Xi_{g*}(\mu_n)=\mu_n$$
for any $n \in \mathbb{N}$.
\medskip
In the following we provide an interesting example, with application to iterated random maps theory, of a discrete time process admitting $O(k)$ as gauge symmetry group.\\
Consider $N=GL(k)$ and let $\Xi_g$ be the action of $O(k)$ on $N$ given by
$$\Xi_g(z)=g \cdot z \cdot g^T .$$
Let now   $Z \in GL(k)$ be discrete-time semimartingales with independent
increments characterized as follows
$$ Z_n=K_n \cdot Z_{n-1}$$
 where $K_n \in GL(k)$ are random variables independent from $Z_{1},...,Z_{n-1}$. Therefore, by Theorem \ref{theor_discrete_time}, we have that $Z$ has $O(k)$ as gauge symmetry group with action $\Xi_g$
  if and only if the distribution of $K_n \in GL(k)$ is invariant with respect to the action
 $\Xi_g$.\\
The invariance of the law of $K_n \in GL(k)$ with respect to $\Xi_g$ is exactly the invariance of the matrix random variable $K_n$ with respect
to orthogonal conjugation. This kind of random variables and
 related processes are deeply studied in random matrix theory (see, e.g., \cite{Anderson2010,Mehta2004}).\\

\section{Time symmetries}\label{section_time}

\bigskip

In this section we briefly discuss the time symmetries of a L\'evy process on a Lie group. After recalling some properties of the absolutely
continuous time change, we introduce the definition
of time symmetry of a semimartingale and we study time symmetries of L\'evy processes,  constructing some explicit examples of L\'evy processes with non-trivial
time symmetry.\\

Given  a positive adapted stochastic process  $\beta$ such that, for
any $\omega \in \Omega$, the function $\beta(\omega):t \mapsto
\beta_t(\omega)$ is absolutely continuous with strictly positive
locally bounded derivative, we define
$$\alpha_t=\inf\{s|\beta_s>t\},$$
where, as usual, $\inf(\mathbb{R}_+)=+\infty$. The process
$\alpha$ is an adapted process such that
$$\beta_{\alpha_t}=\alpha_{\beta_t}=t.$$
If $X$ is a stochastic process adapted to the filtration $\mathcal{F}_t$, we denote by $H_{\beta}(X)$ the stochastic process adapted to the
filtration $\mathcal{F}'_t=\mathcal{F}_{\alpha_t}$ such that
$$H_{\beta}(X)_t=X_{\alpha_t}.$$

Since, by assumption,
$\beta_t$ is absolutely continuous and strictly increasing, then also $\alpha_t$ is absolutely continuous and strictly increasing.  Furthermore,
 denoting by $\beta'_t$ the time derivative of $\beta_t$,  we have
$$\alpha'_t=\frac{1}{\beta'_{\alpha_t}}.$$
If $\mu$ is a random measure on $N$ adapted to the above  filtration $\mathcal{F}_t$, we can introduce a time changed random measure $H_{\beta}(\mu)$
adapted to the filtration $\mathcal{F}'_t$ such that, for any Borel set $E \subset N$,
$$H_{\beta}(\mu)([0,t] \times E)=\mu([0,\alpha_t] \times E).$$
In order to introduce a good  concept of symmetry with respect to time transformations, we have to recall some fundamental properties of absolutely
continuous random time changes with locally bounded derivative.

\begin{theorem}\label{theorem_time1}
Let $\beta_t$ be the process described above and let $Z, Z'$ be two real semimartingales, $K_t$ be a predictable process which is integrable with
respect to $Z$ and  $\mu$ be a random measure. Then
\begin{enumerate}
\item $H_{\beta}(Z)$ is a semimartingale,
\item if $Z$ is a local $\mathcal{F}_t$-martingale, then $H_{\beta}(Z)$ is a local $\mathcal{F}'_t$-martingale,
\item $H_{\beta}([Z,Z'])=[H_{\beta}(Z),H_{\beta}(Z')]$
\item $H_{\beta}(K)$ is integrable with respect to $H_{\beta}(Z)$ and $\int_0^{\alpha_t}{K_sdZ_s}=\int_0^t{H_{\beta}(K)_s dH_{\beta}(Z)_s}.$
\item if $\mu^p$ is the compensator of $\mu$, then $H_{\beta}(\mu^p)$ is the compensator of $H_{\beta}(\mu)$.
\end{enumerate}
\end{theorem}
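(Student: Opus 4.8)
The plan is to derive all five assertions from the basic properties of the pair $(\alpha,\beta)$ recorded above. Since $\beta$ is absolutely continuous with strictly positive, locally bounded derivative, $\beta$ is continuous and strictly increasing, and $\alpha$ is its continuous strictly increasing inverse, with $\beta_{\alpha_t}=\alpha_{\beta_t}=t$; in particular $\{\alpha_t\le u\}=\{\beta_u\ge t\}\in\mathcal{F}_u$, so each $\alpha_t$ is an $\mathcal{F}_t$-stopping time and $(\alpha_t)_{t\ge 0}$ is a continuous, strictly increasing family of stopping times, whence $\mathcal{F}'_t=\mathcal{F}_{\alpha_t}$ is again a (right-continuous) filtration. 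Since all five statements are local in time, it suffices to prove them after stopping at $\alpha_T$ for an arbitrary $T$; thus we may and do assume that $\beta$, hence $\alpha$, is bounded and that $\alpha_t<+\infty$ for all $t$.

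For part (2) I would first take $N$ to be a uniformly integrable (for instance bounded) $\mathcal{F}_t$-martingale: Doob's optional sampling theorem applied to the ordered stopping times $\alpha_s\le\alpha_t$ gives $\mathbb{E}[N_{\alpha_t}\mid\mathcal{F}_{\alpha_s}]=N_{\alpha_s}$ for $s\le t$, and $H_{\beta}(N)_t=N_{\alpha_t}$ is c\acc{a}dl\acc{a}g since $N$ is c\acc{a}dl\acc{a}g and $t\mapsto\alpha_t$ is continuous and increasing; a standard localization then yields part (2) for an arbitrary local martingale. Part (1) follows at once: writing the semimartingale $Z$ as $Z=M+A$ with $M$ a local martingale and $A$ c\acc{a}dl\acc{a}g adapted of locally finite variation, $H_{\beta}(M)$ is a local $\mathcal{F}'_t$-martingale by part (2), while $H_{\beta}(A)_t=A_{\alpha_t}$ is c\acc{a}dl\acc{a}g and its total variation over $[0,t]$ equals that of $A$ over $[0,\alpha_t]$, so $H_{\beta}(Z)=H_{\beta}(M)+H_{\beta}(A)$ is a semimartingale.

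Parts (4) and (3) I would prove together, bootstrapping from simple integrands. For a simple predictable $K$ both sides of the identity in (4) are explicit finite sums that coincide once one replaces the stopping times $T_i$ defining the grid by $\beta_{T_i}$, using that $\alpha$ and $\beta$ are mutually inverse strictly increasing functions; moreover, since $\alpha$ is continuous and strictly increasing, $\Delta H_{\beta}(Z)_s=\Delta Z_{\alpha_s}$ and $H_{\beta}(Z)_{s^-}=Z_{\alpha_s^-}=H_{\beta}(Z_-)_s$, so evaluating the identity $Z_t^2=Z_0^2+2\int_0^t Z_{s^-}\,dZ_s+[Z]_t$ on simple approximations already yields the diagonal bracket identity $[H_{\beta}(Z)]=H_{\beta}([Z])$. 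For a general $Z$-integrable predictable $K$ one treats the finite-variation part of $Z$ by the change of variables $r=\alpha_u$ in the Lebesgue--Stieltjes integral, and the martingale part by approximating $K$ with simple predictable processes and passing to the limit via the It\^o isometry together with the bracket identity just obtained; this proves (4). Then (3) follows: applying $H_{\beta}$ to the product rule $[Z,Z']_t=Z_tZ'_t-Z_0Z'_0-\int_0^t Z_{s^-}\,dZ'_s-\int_0^t Z'_{s^-}\,dZ_s$ and using $H_{\beta}(ZZ')=H_{\beta}(Z)H_{\beta}(Z')$, part (4) and $H_{\beta}(Z)_{s^-}=H_{\beta}(Z_-)_s$, the right-hand side becomes exactly the product rule for $[H_{\beta}(Z),H_{\beta}(Z')]$.

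Finally, for part (5) I would invoke the characterization of the compensator $\mu^p$ as the unique predictable random measure such that, for every predictable function $W$ on $\Omega\times\mathbb{R}_+\times N$ with $|W|$ locally $\mu$-integrable, the process $t\mapsto\int_0^t\!\int_N W\,\mu(ds,dz)-\int_0^t\!\int_N W\,\mu^p(ds,dz)$ is a local martingale. The time-changed measure $H_{\beta}(\mu^p)$ is $\mathcal{F}'_t$-predictable, being the time change of a predictable random measure by a continuous family of stopping times, and from the definition of $H_{\beta}$ on random measures one has $\int_0^{\alpha_t}\!\int_N W\,\mu(ds,dz)=\int_0^t\!\int_N H_{\beta}(W)\,H_{\beta}(\mu)(ds,dz)$, and likewise with $\mu^p$; hence the image under $H_{\beta}$ of the above local martingale is the corresponding process built from $H_{\beta}(\mu)$ and $H_{\beta}(\mu^p)$, which is an $\mathcal{F}'_t$-local martingale by part (2), and uniqueness of the compensator gives $H_{\beta}(\mu^p)=(H_{\beta}(\mu))^p$. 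The step I expect to be the main obstacle is organizing parts (3) and (4) so as to avoid circularity --- this is why I would settle the simple-integrand case and the diagonal bracket identity first --- together with the measurability bookkeeping in part (5) (predictability of the time-changed compensator and right-continuity of $\mathcal{F}'_t$); apart from this, all of the above is the continuous-time-change specialization of the classical theory of random time changes.
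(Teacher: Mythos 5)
Your proposal is correct in outline, but it takes a different route from the paper only in the sense that the paper does not prove Theorem \ref{theorem_time1} at all: its ``proof'' consists of the single observation that, $\beta$ being continuous, it is an \emph{adapted change of time} in the sense of Jacod (1979), Chapter X, and all five assertions are then quoted from Sections b) and c) of that chapter. What you do instead is reconstruct that cited material from scratch: the identity $\{\alpha_t\le u\}=\{\beta_u\ge t\}$ making each $\alpha_t$ a stopping time, optional sampling plus localization for (2), the martingale-plus-finite-variation decomposition for (1), the simple-integrand computation with the grid correspondence $T_i\leftrightarrow\beta_{T_i}$ followed by a limiting argument for (4), integration by parts for (3), and the uniqueness characterization of the compensator for (5). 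The essential ingredient is the same in both treatments --- continuity and strict monotonicity of $\alpha$, so that $H_\beta(Z)_{s^-}=Z_{\alpha_s^-}$, predictability is preserved, and no mass of the time change sits at jumps --- which is exactly what Jacod's notion of adapted change of time encodes; your write-up has the merit of making this explicit, at the price of leaving the heaviest steps as sketches: the passage in (4) from simple integrands to a general $Z$-integrable $K$ (and the fact that $H_\beta$ maps $\mathcal{F}_t$-predictable processes to $\mathcal{F}'_t$-predictable ones) is only indicated, and your preliminary reduction ``stop at $\alpha_T$ so that $\alpha$ is finite'' should rather be phrased as assuming $\beta_t\to\infty$ (or working on the stochastic interval where $\alpha_t<\infty$), since stopping $Z$ at $\alpha_T$ does not by itself rule out $\alpha_T=+\infty$. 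These are standard points and do not affect correctness; they are precisely what the cited Chapter X supplies in full.
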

\begin{proof}
Since the random time change $\beta$ is continuous, $\beta$ is an \emph{adapted change of time} in the meaning of \cite{Jacod1979}( Chapter X, Section b)).\\
Thank to  this remark the proofs of assertions 1, ..., 5 can be found in \cite{Jacod1979}( Chapter X, Sections b) and c)). ${}\hfill$
\end{proof}\\

Taking into account Theorem \ref{theorem_time1}, a quite natural definition of time symmetry could be the following: a semimartingale $Z$ has time symmetries if, for any $\beta$  satisfying  the
previous hypotheses, $Z$ and $H_{\beta}(Z)$ have the same law. Unfortunately, using  for example standard deterministic time changes, it is
possible to prove that the only process satisfying the previous definition is the process almost surely equal to a constant. In order to provide a different definition, admitting non-trivial examples,
we introduce a smooth action $\Gamma:\mathbb{R}_+ \times N \rightarrow N$ of the group $\mathbb{R}_+$ on $N$. We write $\gamma_r:\mathfrak{n} \rightarrow \mathfrak{n}$ for the linear action of $\mathbb{R}_+$ on $\mathfrak{n}$ such that
\begin{eqnarray*}
\gamma_r(Y)&=&\partial_a(\Gamma_r \circ \Phi_a)(1_N)|_{a=0},
\end{eqnarray*}
where $\Phi_a$ is the flow of a right invariant vector field $Y$ on $N$.
If we assume that
$$ \partial_b(\partial_a(\Gamma_r \circ \Phi_a \circ \Phi'_b))(1_N)|_{a=0,b=0}=0, $$
we obtain
\begin{eqnarray}
&\begin{array}{c}
Y^z(f \circ \Psi^{\Gamma}_r)(\tilde{z},1_N)=Y^{z'}(f \circ \overline{\Psi}^{\Gamma}_r)(\tilde{z},z,z)=(\gamma_r(Y))(f)(\tilde{z})
\end{array}&\label{equation_upsilon1}\\
&\begin{array}{c}
Y'^z(Y^z(f \circ \Psi^{\Gamma}_r))(\tilde{z},1_N)=Y'^{z'}(Y^{z'}(f \circ \overline{\Psi}^{\Gamma}_r))(\tilde{z},z,z)=(\gamma_r(Y'))[(\gamma_r(Y))(f)](\tilde{z}).
\end{array}&\label{equation_O1}
\end{eqnarray}

\begin{definition}\label{definition_time}
Let $Z$ be a semimartingale on a Lie group $N$ and let $\Gamma_\cdot:N \times \mathbb{R}_+ \rightarrow N$ be an $\mathbb{R}_+$ action such that
$\Gamma_r(1_N)=1_N$ for any $r \in \mathbb{R}_+$. We say that $Z$ has a time symmetry with action $\Gamma_r$ with respect to the filtration $\mathcal{F}_t$ if
$$dZ'_t=H_{\beta}(\Gamma_{\beta'_t}(dZ_t))$$
has the same law of $Z$ for any $\beta_t$  satisfying the previous hypotheses and such that $\beta'_t$ is a $\mathcal{F}_t$-predictable locally
bounded process in $\mathbb{R}_+$.
\end{definition}

\begin{remark}
The request that $\beta'_t$ is a locally bounded process in $\mathbb{R}_+$ ensures  that $\beta'_t(\omega) \geq c(\omega) >0$ for some $c(\omega)
\in \mathbb{R}_+$ and for $t$ in compact sets.
\end{remark}

\begin{lemma}\label{lemma_time2}
If $(X,Z)$ is a solution to the SDE $\Psi_{K_t}$ and  $\beta$ is an absolutely continuous process such that $\beta'_t$ is locally bounded in
$\mathbb{R}_+$, then $(H_{\beta}(X),H_{\beta}(Z))$ is a solution to the SDE $\Psi_{H_{\beta}(K)_t}$.
\end{lemma}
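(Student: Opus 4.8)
The plan is to apply the random time change $H_\beta$ directly to the integral equation \eqref{equation_manifold2} that defines the solution $(X,Z)$, and to check term by term that the resulting identity is exactly \eqref{equation_manifold2} for the geometrical SDE $\Psi_{H_\beta(K)_t}$ driven by $H_\beta(Z)$; the whole argument reduces to repeated use of Theorem \ref{theorem_time1} together with some bookkeeping. First I would record the elementary facts about $\alpha$ used throughout: since $\beta$ is absolutely continuous and strictly increasing with $\beta'$ locally bounded — hence, on compacts, bounded away from zero, cf. the remark after Definition \ref{definition_time} — the process $\alpha$ is continuous and strictly increasing with $\alpha_0=0$; consequently $H_\beta(Y)_{t_-}=Y_{(\alpha_t)_-}$ and $\Delta H_\beta(Y)_t=\Delta Y_{\alpha_t}$ for every c\`adl\`ag adapted process $Y$, $H_\beta(X)_0=X_0$ and $H_\beta(Z)_0=Z_0$, and $s\mapsto u=\beta_s$ is a continuous strictly increasing bijection carrying the jump times of $(X,Z)$ lying in $(0,\alpha_t]$ onto those of $(H_\beta(X),H_\beta(Z))$ lying in $(0,t]$.

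Next I would verify that the new data are admissible. By assertion 1 of Theorem \ref{theorem_time1} the processes $H_\beta(X)$ and $H_\beta(Z)$ are semimartingales, and $H_\beta(K)_t:=K_{\alpha_t}$ is again a predictable, locally bounded process with values in $\mathcal{K}$: it is $\mathcal{K}$-valued by construction, it is predictable with respect to $\mathcal{F}'_t=\mathcal{F}_{\alpha_t}$ because $\beta$ is a continuous adapted change of time (cf. \cite{Jacod1979}, Chapter X), and it is locally bounded along the $\mathcal{F}'_t$-stopping times $\beta_{\tau_n}$ obtained from the localising sequence $\tau_n,\mathfrak{K}_n$ of $K$. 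I would also observe that the coefficients in \eqref{equation_manifold2} — namely $H^i_{\alpha,s}:=\partial_{z'^{\alpha}}(\overline{\Psi}^i_{K_s})(X_{s_-},Z_{s_-},Z_{s_-})$ and $G^i_{\alpha\beta,s}:=\partial_{z'^{\alpha},z'^{\beta}}(\overline{\Psi}^i_{K_s})(X_{s_-},Z_{s_-},Z_{s_-})$ — are predictable and locally bounded, being obtained by composing the deterministic maps $\partial_{z'^{\alpha}}\overline{\Psi}^i$, $\partial_{z'^{\alpha},z'^{\beta}}\overline{\Psi}^i$, which are continuous in all their arguments by the standing hypotheses on $\Psi$, with the predictable processes $K_s$, $X_{s_-}$, $Z_{s_-}$; this is precisely what makes assertion 4 of Theorem \ref{theorem_time1} applicable to them.

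Then I would transform \eqref{equation_manifold2} read at time $\alpha_t$, term by term. For the stochastic integral term, assertion 4 of Theorem \ref{theorem_time1} gives $\int_0^{\alpha_t}H^i_{\alpha,s}\,dZ^{\alpha}_s=\int_0^t H_\beta(H^i_\alpha)_s\,dH_\beta(Z^{\alpha})_s$, and by the continuity of $\alpha$ one has $H_\beta(H^i_\alpha)_s=H^i_{\alpha,\alpha_s}=\partial_{z'^{\alpha}}(\overline{\Psi}^i_{H_\beta(K)_s})(H_\beta(X)_{s_-},H_\beta(Z)_{s_-},H_\beta(Z)_{s_-})$, which is exactly the integrand demanded by \eqref{equation_manifold2} for the pair $(H_\beta(X),H_\beta(Z))$ with data $H_\beta(K)$. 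For the second-order term, assertion 3 of Theorem \ref{theorem_time1} gives $H_\beta([Z^{\alpha},Z^{\beta}])=[H_\beta(Z^{\alpha}),H_\beta(Z^{\beta})]$ — and, if the continuous part of the bracket is the one intended, the same identity for the continuous parts, since $H_\beta$ commutes with $\sum_{s\le\cdot}(\Delta Z^{\alpha}_s)(\Delta Z^{\beta}_s)$ by the jump reindexing — and then assertion 4 of Theorem \ref{theorem_time1}, applied to the integral of the predictable coefficient $G^i_{\alpha\beta}$ against the semimartingale $[Z^{\alpha},Z^{\beta}]$, yields the required integrand. Finally, for the jump sum the substitution $s=\alpha_u$ turns $\sum_{0\le s\le\alpha_t}$ into $\sum_{0\le u\le t}$, while the identities $X_{(\alpha_u)_-}=H_\beta(X)_{u_-}$, $Z_{\alpha_u}=H_\beta(Z)_u$, $Z_{(\alpha_u)_-}=H_\beta(Z)_{u_-}$, $\Delta Z^{\alpha}_{\alpha_u}=\Delta H_\beta(Z)^{\alpha}_u$ and $K_{\alpha_u}=H_\beta(K)_u$ carry each summand $\overline{\Psi}^i_{K_s}(X_{s_-},Z_s,Z_{s_-})-\overline{\Psi}^i_{K_s}(X_{s_-},Z_{s_-},Z_{s_-})-\partial_{z'^{\alpha}}(\overline{\Psi}^i_{K_s})(X_{s_-},Z_{s_-},Z_{s_-})\Delta Z^{\alpha}_s$ onto the corresponding summand for the time-changed data. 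Adding the three transformed terms and using $H_\beta(X)_0=X_0$ gives \eqref{equation_manifold2} with $K$ replaced by $H_\beta(K)$; by Definition \ref{definition_solution} this says that $(H_\beta(X),H_\beta(Z))$ solves $\Psi_{H_\beta(K)_t}$.

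The step I expect to be the only non-routine one is the measurability bookkeeping gathered in the second paragraph: that $H_\beta(K)$ remains predictable, locally bounded and $\mathcal{K}$-valued; that the composed coefficients $H^i_{\alpha}$, $G^i_{\alpha\beta}$ are predictable, so that the change-of-time rule for stochastic integrals (assertion 4 of Theorem \ref{theorem_time1}) applies; and that the left-limit identity $H_\beta(Y)_{t_-}=Y_{(\alpha_t)_-}$, which rests on the continuity and strict monotonicity of $\alpha$, holds — it is this identity that makes the time-changed integrands and summands coincide verbatim with those demanded by \eqref{equation_manifold2} for $(H_\beta(X),H_\beta(Z))$. Everything else is a term-by-term invocation of Theorem \ref{theorem_time1} and the reindexing of a countable sum.
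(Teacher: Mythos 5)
Your proposal is correct and follows essentially the same route as the paper, whose proof is simply the remark that the statement is a direct consequence of Definition \ref{definition_solution} and Theorem \ref{theorem_time1} (in particular point 4); your term-by-term verification, including the bracket identity from point 3 and the jump reindexing via the continuity and strict monotonicity of $\alpha$, is just the detailed spelling-out of that argument.
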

\begin{proof}
The thesis is a simple consequence of Definition \ref{definition_solution} and Theorem \ref{theorem_time1}, point 4. ${}\hfill$ \end{proof}\\

We now restrict our attention to L\'evy processes on $N$, recalling  some general results about L\'evy processes with  time symmetries and providing
explicit  examples.

\begin{theorem}\label{theorem_time3}
If $Z$ is a L\'evy process with characteristics $(b_0 ,A_0 , \nu_0)$, then $Z$ admits a time symmetry with action $\Gamma_r$ if and only
if,  for any fixed  $r \in \mathbb{R}_+$,
\begin{eqnarray}
b_0^{\alpha}&=&\frac{1}{r}\left(\gamma^{\alpha}_{r,\beta}b^{\beta}_0\right)+
\frac{1}{r}\int_N{(h^{\alpha}(z')-h^{\beta}(\Gamma_{r^{-1}}(z))\gamma^{\alpha}_{r,\beta})\nu_0(dz')}\label{equation_timec1}\\
A_0^{\alpha\beta}&=&\frac{1}{r} \gamma^{\alpha}_{r,\gamma}\gamma^{\beta}_{r,\delta}A^{\gamma\delta}_0\label{equation_timec2}\\
\nu_0(dz)&=&\frac{1}{r}\Gamma_{r*}(\nu_0(dz)).\label{equation_timec3}
\end{eqnarray}
\end{theorem}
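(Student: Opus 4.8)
The plan is to mimic exactly the structure of the gauge-symmetry characterization Theorem \ref{theorem_characteristic3}, now with the extra time-change ingredient controlled by Lemma \ref{lemma_time2} and Theorem \ref{theorem_time1}. First I would unwind the definition: by Definition \ref{definition_time}, $Z$ has time symmetry with action $\Gamma_r$ iff the semimartingale $Z'$ defined by $dZ'_t=H_\beta(\Gamma_{\beta'_t}(dZ_t))$ has the same law as $Z$ for every admissible $\beta$. By Corollary \ref{corollary_gauge1} applied pathwise with the (now $r$-valued, predictable) process $\beta'_t$ in place of $G_t$, the process $\hat Z$ with $d\hat Z_t=\Gamma_{\beta'_t}(dZ_t)$ is a solution to the geometrical SDE $\hat\Psi_{k,r}(x,z)=\Psi_k(x,\Gamma_{r^{-1}}(z))$; and by Lemma \ref{lemma_time2}, applying $H_\beta$ turns this into the corresponding time-changed SDE. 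The point is that $Z'$ is again a semimartingale on $N$, and using the It\^o formula (Lemma \ref{lemma_Ito}), Lemma \ref{lemma_geometrical1}, and the infinitesimal identities \refeqn{equation_upsilon1}--\refeqn{equation_O1}, one can compute the action of a test function $f\in C^2_c(N)$ on $Z'$ and read off a generator-type expression for $Z'$.

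The second step is to identify this generator. I would argue that $Z'$ is again a L\'evy process (since the time change of a L\'evy process by a deterministic $\beta$ is L\'evy, and the equation $dZ'_t=H_\beta(\Gamma_{\beta'_t}(dZ_t))$ is driven only by $Z$'s characteristics via $\Gamma$), and compute its characteristics $(b_0',A_0',\nu_0')$ in terms of $(b_0,A_0,\nu_0)$ and $\Gamma_r$. Concretely: the drift part picks up a factor $\frac1r$ from the time change and a factor $\gamma_{r,\beta}^\alpha$ from $\Gamma$, plus the usual correction coming from the mismatch between the truncation function $h^\alpha$ evaluated before and after $\Gamma_{r^{-1}}$ — this is the origin of the integral term in \refeqn{equation_timec1}, exactly paralleling \refeqn{equation_characteristic5}; the diffusion matrix transforms quadratically in $\gamma_r$ with one overall $\frac1r$ (Theorem \ref{theorem_time1}, point 3, gives the $H_\beta$-behaviour of $[Z^\alpha,Z^\beta]^c$, and a deterministic $\beta$ contributes $\alpha'_t=1/\beta'_{\alpha_t}$); the L\'evy measure transforms as the pushforward $\Gamma_{r*}\nu_0$ again with the $\frac1r$ from the time change of the jump compensator (Theorem \ref{theorem_time1}, point 5). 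Since the characteristics of a L\'evy process on $N$ determine its generator, and (as remarked after Definition \ref{definition_Levy}) the generator determines the law among L\'evy processes, equality in law of $Z$ and $Z'$ is equivalent to $(b_0',A_0',\nu_0')=(b_0,A_0,\nu_0)$, which is precisely the system \refeqn{equation_timec1}--\refeqn{equation_timec3}.

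For the converse direction, if \refeqn{equation_timec1}--\refeqn{equation_timec3} hold for all $r$, then the computation above shows $Z'$ has the same characteristics as $Z$ for every admissible $\beta$, hence the same law; one only needs to also check that $Z'$ is defined for all times, which follows because $\Gamma$ is an $\mathbb{R}_+$-action preserving $1_N$ and the SDE $d\hat Z_t=\Gamma_{\beta'_t}(dZ_t)$ on a Lie group has infinite explosion time (the same left-invariance argument as for gauge symmetries, cf. the discussion before Theorem \ref{theorem_gauge1}), and $H_\beta$ preserves this since $\beta'_t$ is locally bounded and bounded below. The main obstacle I anticipate is the bookkeeping for the drift term: one must be careful that the truncation function $h^\alpha$ is \emph{not} equivariant under $\Gamma_r$, so the naive transformation $b_0\mapsto \frac1r\gamma_r b_0$ must be corrected by $\frac1r\int_N(h^\alpha(z')-h^\beta(\Gamma_{r^{-1}}(z'))\gamma^\alpha_{r,\beta})\nu_0(dz')$, and checking this integral converges and arises correctly from the sum-over-jumps term in the It\^o formula (via \refeqn{equation_gauge4} and \refeqn{equation_gauge3}) is the delicate point — exactly as in the proof of Theorem \ref{theorem_characteristic3} in \cite{Annals}, to which the argument can ultimately be reduced by the substitution $G_t=\beta'_t$ together with the extra factor $\frac1r$ supplied by Theorem \ref{theorem_time1}.
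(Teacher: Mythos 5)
Your proposal follows essentially the same route as the paper's proof: compute the stochastic characteristics of the transformed semimartingale $d\tilde Z_t=H_\beta(\Gamma_{\beta'_t}(dZ_t))$ via the time-change properties of Theorem \ref{theorem_time1} (yielding the $\frac1r$, $\gamma_r$, $\Gamma_{r*}$ factors and the truncation-function correction), obtain necessity by specializing to the constant case $\beta'_t\equiv r$, and obtain sufficiency by noting that under \refeqn{equation_timec1}--\refeqn{equation_timec3} the characteristics of $\tilde Z$ reduce to $(b_0t,A_0t,\nu_0(dz)dt)$ so that the law coincides with that of $Z$ (the paper phrases this last step through elementary predictable $\beta'$ plus a density argument, which is only a presentational difference). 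The argument is correct and matches the paper's in substance.
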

\begin{proof}
First we note that, putting $d\tilde{Z}_t=H_{\beta}(\Gamma_{\beta'}(dZ_t))$, using Theorem \ref{theorem_time1} and the definition of stochastic characteristics (see \cite{Annals,Jacod2003}), the stochastic characteristic of $\tilde{Z}$ are given by
\begin{eqnarray}
\tilde{b}_t^{\alpha}&=&H_{\beta}\left(\int_0^t \left(\gamma^{\alpha}_{\beta'_s,\beta}b^{\beta}_0+
\int_N{(h^{\alpha}(z')-h^{\beta}(\Gamma_{\beta'^{-1}_s}(z))\gamma^{\alpha}_{\beta'_s,\beta})\nu_0(dz')}\right)ds\right)\label{equation_timec4}\\
\tilde{A}_t^{\alpha\beta}&=&H_{\beta}\left(\int_0^t{\gamma^{\alpha}_{\beta'_s,\gamma}\gamma^{\beta}_{\beta'_s,\delta}A^{\gamma\delta}_0ds}\right)\label{equation_timec5}\\
\tilde{\nu}(dt,dz)&=&H_{\beta}(\Gamma_{\beta'_t}(\nu_0(dz))dt)\label{equation_timec6}
\end{eqnarray}
(see also \cite{Albeverio2017} for a complete proof of this fact).\\
\noindent If $\beta'_t=r \in \mathbb{R}_+$, and so $\beta_t=r t$,  using the fact that $\alpha_t=\frac{t}{r}$ we obtain
\begin{eqnarray*}
\tilde{b}_t^{\alpha}&=&\int_0^t \left(\gamma^{\alpha}_{r,\beta}b^{\beta}_0+
\int_N{(h^{\alpha}(z')-h^{\beta}(\Gamma_{r^{-1}}(z))\gamma^{\alpha}_{r,\beta})\nu_0(dz')}\right)\frac{ds}{r}\\
\tilde{A}_t^{\alpha\beta}&=&\int_0^t{\gamma^{\alpha}_{r,\gamma}\gamma^{\beta}_{r,\delta}A^{\gamma\delta}_0\frac{ds}{r}}\\
\tilde{\nu}(dt,dz)&=&\frac{1}{r}\Gamma_{r,*}(\nu_0(dz))dt.
\end{eqnarray*}
Using the fact that, since the sigma algebras generated by $Z_t$ and $\tilde{Z}$ coincide,  $\tilde{Z}$ has the same law of $Z$ only if $\tilde{b}_t=b_0t, \tilde{A}_t=A_0t, \tilde{\nu}(dt,dz)=\nu_0(dz)dt$, we get equations \refeqn{equation_timec1}, \refeqn{equation_timec2} and \refeqn{equation_timec3}.\\
Conversely suppose that equations \refeqn{equation_timec1}, \refeqn{equation_timec2} and \refeqn{equation_timec3} hold
 and let $\beta'_t$ be an elementary process. Then, by equations \refeqn{equation_timec4}, \refeqn{equation_timec5} and \refeqn{equation_timec6}, we have that the stochastic characteristics of $\tilde{Z}$ are exactly $b_0t,A_0t,\nu_0(dz)dt$. This means that $\tilde{Z}$ is a L\'evy process with the same law of $Z$. Since the elementary processes are dense in the set of predictable processes, we have the thesis.
 ${}\hfill$ \end{proof}\\

We provide now an example of time symmetric L\'evy processes. When $N=\mathbb{R}^n$ (homogeneous),  $\alpha$-stable processes are well known since their generator is the
fractional
Laplacian, and  they can be obtained by a subordination from a Brownian motion (see, e.g., \cite{Albeverio2007,Applebaum2004}). \\
The homogeneous $\alpha$-stable processes are L\'evy processes in $\mathbb{R}^n$  depending on a parameter $\alpha \in (0,2)$. 
The process $Z$ is a pure jump L\'evy process with L\'evy measure
$$\nu_{\alpha}(dz)=\frac{1}{|z|^{n+\frac{\alpha}{2}}}dz,$$
where $|\cdot|$ is the standard norm of $\mathbb{R}^n$, $dz$ is the Lebesgue measure and $A_0=0,b_0=0$.\\
The generator $L_{\alpha}$ of an $\alpha$-stable process is
$$L_{\alpha}(f)(z)=\int_{\mathbb{R}^n}{\left(f(z+z')-f(z)-I_{|z'|<1}(z')\left(
z'^{\beta}\partial_{z^{\beta}}(f)(z)\right)\right)\nu_{\alpha}(dz')}.$$
If we consider the following action
$$\Gamma^{\alpha}_r(z)=r^{\frac{1}{\alpha}}z,$$
we have that
$$\Gamma_{r*}(\nu_0)=\frac{1}{2}\nu_0. $$
Furthermore, using the fact that $\nu_0$ is invariant with respect to rotations, we have
$$\int_{\mathbb{R}^n}(I_{B}(z')-I_{\Gamma_{1/r}(B)}z'^{\alpha})\nu(dz')=0.$$
Thus if we exploit Theorem \ref{theorem_time3}, we obtain that the $\alpha$-stable processes are time symmetric with respect to the action $\Gamma_r$.

\section{Symmetry and invariance properties of a SDE with jumps}\label{section_symmetry}

\subsection{Stochastic transformations}\label{Subsec1}

Let $\mathcal{C}(\mathbb{P}_0)$ (or simply $\mathcal{C}$) be the class of c\acc{a}dl\acc{a}g semimartingales $Z$ on a Lie group $N$  inducing
the same probability measure on $\mathcal{D}([0,T],N)$ (the metric space of c\acc{a}dl\acc{a}g functions taking values in $N$). In order to
generalize to the semimartingale case the notion of weak solution to a SDE driven by a Brownian motion, we introduce the following

\begin{definition}\label{definition_transformation1}
 Given  a semimartingale $X$ on $M$ and  a semimartingale $Z$ on $N$ such that
$Z \in \mathcal{C}$, the pair $(X,Z)$  is called a \emph{process of class $\mathcal{C}$} on $M$. \\
A process $(X,Z)$ of class $\mathcal{C}$ which is a solution to the geometrical SDE $\Psi$ is called a \emph{solution of class $\mathcal{C}$ } to
$\Psi$.
\end{definition}

We remark that if $(X,Z)$ and $(X',Z')$ are two solutions of
 class $\mathcal{C}$ and if $X_0$ and  $X'_0$ have the same law, then also  $X$ and $X'$ have the same law.\\

In this section we  define a set of transformations which transform a process of class $\mathcal{C}$ into a new process of class $\mathcal{C}$.
This set of transformations depends on the properties of the processes
belonging to the class $\mathcal{C}$.\\
We start by describing the case of processes in $\mathcal{C}$ admitting a gauge-symmetry group $\mathcal{G}$ with action
$\Xi_g$ and a time symmetry with action $\Gamma_r$. Afterwords,  we discuss how to extend our approach to more general situations.\\

\begin{definition}
A stochastic transformation from a manifold $M$ into a manifold $M'$ is a triple $(\Phi,B,\eta)$, where $\Phi$ is a diffeomorphism of $M$ into $M'$, $B:M \rightarrow
\mathcal{G}$ is a smooth function and $\eta:M \rightarrow \mathbb{R}_+$ is a positive smooth function. We denote by $S(M,M')$ the set of stochastic
transformations of $M$ into $M'$.
\end{definition}

A stochastic transformation defines a map between the set of stochastic processes of class $\mathcal{C}$ on $M$ into the set of stochastic
processes of class $\mathcal{C}$ on $M'$. The action of the stochastic transformation $T \in S(M,M')$ on the stochastic process
$(X,Z)$ is denoted by $(X',Z')=P_T(X,Z)$, and is defined as follows:
\begin{eqnarray*}
X'&=&\Phi\left[H_{\beta^{\eta}}(X)\right]\\
dZ_t'&=&H_{\beta^{\eta}}\left\{\Xi_{B(X_t)}\left[\Gamma_{\eta(X_t)}(dZ_t)\right]\right\},
\end{eqnarray*}
 where $\beta^{\eta}$ is the random time change given by
$$\beta^{\eta}_t=\int_0^t{\eta(X_s)ds}.$$
The second step is to define  an action of a stochastic transformation on the set of geometrical SDEs. This action transforms a geometrical SDE
$\Psi$ on $M$ into the geometrical SDE $\Psi'=E_T(\Psi)$ on $M'$ defined by
$$\Psi'(x,z)=\Phi\left\{\Psi\left[\Phi^{-1}(x),(\Gamma_{(\eta(\Phi^{-1}(x))^{-1}} \circ \Xi_{(B(\Phi^{-1}(x)))^{-1}})(z)\right]\right\}.$$

\begin{theorem}\label{theorem_symmetry3}
If $T \in S(M,M')$ is a stochastic transformation and  $(X,Z)$ is a  class $\mathcal{C}$ solution  to the geometrical SDE $\Psi$,
then $P_T(X,Z)$ is a class $\mathcal{C}$ solution  to the geometrical SDE $E_T(\Psi)$.
\end{theorem}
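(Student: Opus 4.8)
The plan is to write $T=(\Phi,B,\eta)$ as a composition of three elementary stochastic transformations and to treat each factor with a transformation rule that is already available. Set $T_\eta=(\mathrm{id}_M,1_{\mathcal G},\eta)$, $T_B=(\mathrm{id}_M,B,1)$ and $T_\Phi=(\Phi,1_{\mathcal G},1)$, where $1_{\mathcal G}$ and $1$ denote the constant maps at the identity of $\mathcal G$ and at $1\in\mathbb R_+$. Since $\Xi_{1_{\mathcal G}}=\mathrm{id}_N$ and $\Gamma_1=\mathrm{id}_N$, a direct check on the definitions of $P_{(\cdot)}$ and $E_{(\cdot)}$ gives $E_T=E_{T_\Phi}\circ E_{T_B}\circ E_{T_\eta}$ and $P_T=P_{T_\Phi}\circ P_{T_B}\circ P_{T_\eta}$; the only point in the second identity that is not completely routine is that the random time change commutes with the gauge action $\Xi$ (after the obvious reindexing of its predictable parameter), which I would deduce by applying Lemma \ref{lemma_time2} to the geometrical SDE $\Psi^{\Xi}_{G_t}(x,z)=\Xi_{G_t}(z)\cdot x$ that defines $d\tilde Z=\Xi_{G}(dZ)$. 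Hence it suffices to prove the statement for each of $T_\eta$, $T_B$, $T_\Phi$ separately. I would also use repeatedly the elementary fact that, because in \refeqn{equation_manifold2} the coefficients are evaluated only at $X_{s^-}$, solving a $\mathcal K$-parametrised geometrical SDE $\Psi_{K_t}$ with $K_t=k(X_{t^-})$ is the same as solving the single-point geometrical SDE $x\mapsto\Psi_{k(x)}(x,z)$; this is what lets me absorb the $X$-dependence of $B$ and $\eta$ into the coefficient.

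For $T_\eta$: $\mathbb R_+$ is a Lie group acting on $N$ through $\Gamma$ with $\Gamma_r(1_N)=1_N$, and $\eta(X_{t^-})$ is predictable and locally bounded, so Corollary \ref{corollary_gauge1} (with $\mathcal G$ replaced by $\mathbb R_+$ and action $\Gamma$) shows that $(X,Z^a)$, with $dZ^a_t=\Gamma_{\eta(X_t)}(dZ_t)$, solves the geometrical SDE $x\mapsto\Psi(x,\Gamma_{\eta(x)^{-1}}(z))$, which is exactly $E_{T_\eta}(\Psi)$. Applying Lemma \ref{lemma_time2} with the time change $\beta^\eta_t=\int_0^t\eta(X_s)\,ds$ (absolutely continuous, with locally bounded derivative $\eta(X_t)$), the pair $(H_{\beta^\eta}(X),H_{\beta^\eta}(Z^a))$ solves the same SDE. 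Since $H_{\beta^\eta}(Z^a)$ is precisely the process obtained from $Z$ by the recipe $dZ'_t=H_{\beta^\eta}(\Gamma_{(\beta^{\eta})'_t}(dZ_t))$ of Definition \ref{definition_time} (here $(\beta^{\eta})'_t=\eta(X_t)$ is predictable and locally bounded), the time-symmetry of $Z$ gives that $H_{\beta^\eta}(Z^a)$ has the same law as $Z$, hence still belongs to $\mathcal C$; so $P_{T_\eta}(X,Z)$ is a class $\mathcal C$ solution of $E_{T_\eta}(\Psi)$.

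For $T_B$: now $B(H_{\beta^\eta}(X)_{t^-})$ is a $\mathcal G$-valued predictable locally bounded process for the time-changed filtration, and $\Xi_g(1_N)=1_N$, so Corollary \ref{corollary_gauge1} (with $\mathcal G$ and action $\Xi$) turns the SDE $x\mapsto\Psi(x,\Gamma_{\eta(x)^{-1}}(z))$ into $x\mapsto\Psi\big(x,(\Gamma_{\eta(x)^{-1}}\circ\Xi_{B(x)^{-1}})(z)\big)$, with noise $\Xi_{B(H_{\beta^\eta}(X))}(H_{\beta^\eta}(Z^a))$; by the commutation fact mentioned above this coincides with $H_{\beta^\eta}\{\Xi_{B(X)}[\Gamma_{\eta(X)}(dZ)]\}$, i.e. the noise prescribed by $P_T$. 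Because $Z$ — hence every member of $\mathcal C$ — admits $\mathcal G$, with action $\Xi$, as gauge symmetry group (Definition \ref{definition_gauge}) and the noise reached after the previous step was of class $\mathcal C$, the new noise is again of class $\mathcal C$. For $T_\Phi$: I would apply Theorem \ref{theorem_geometrical1} with the diffeomorphism $\Phi$ and $\tilde\Phi=\mathrm{id}_N$; this conjugates the coefficient by $\Phi$, leaves the noise unchanged, and produces exactly $\Psi'(x,z)=\Phi\{\Psi[\Phi^{-1}(x),(\Gamma_{\eta(\Phi^{-1}(x))^{-1}}\circ\Xi_{B(\Phi^{-1}(x))^{-1}})(z)]\}=E_T(\Psi)$. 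Assembling the three steps gives that $P_T(X,Z)=(X',Z')$ is a class $\mathcal C$ solution of $E_T(\Psi)$.

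I do not expect a deep difficulty here, since the genuine analytic content is already in Theorem \ref{theorem_geometrical1}, Theorem \ref{theorem_gauge1} / Corollary \ref{corollary_gauge1}, Lemma \ref{lemma_time2} and the definitions of gauge and time symmetry. The main obstacle is the bookkeeping: carrying $B(X_{t^-})$ and $\eta(X_{t^-})$ as honest predictable processes through each step, correctly absorbing them into the coefficients via the single-point reformulation, establishing the commutation of the time change with $\Xi$, and verifying term by term that the composition of the three coefficient transformations is precisely the formula defining $E_T$. A secondary point to watch is the change of filtration: after the time change one works with $\mathcal F'_t=\mathcal F_{\alpha_t}$, so one must know that the law of a process of class $\mathcal C$ — and therefore its gauge and time symmetry properties — is preserved, so that the relevant invariance statements remain applicable on the new filtration.
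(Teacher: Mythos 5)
Your proposal is correct and follows essentially the same route as the paper, whose proof consists precisely in invoking Theorem \ref{theorem_geometrical1}, Theorem \ref{theorem_gauge1} (resp.\ Corollary \ref{corollary_gauge1}) and Lemma \ref{lemma_time2} for the solution property, and the gauge and time symmetries of $Z$ for membership in the class $\mathcal{C}$. Your decomposition of $T$ into the factors $T_\eta$, $T_B$, $T_\Phi$, the single-point reformulation of the $\mathcal{K}$-dependence, and the commutation of $H_{\beta^{\eta}}$ with $\Xi$ are just the bookkeeping that the paper leaves implicit, carried out carefully (indeed more carefully, e.g.\ regarding the time-changed filtration) than in the published argument.
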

\begin{proof}
The fact that $P_T(X,Z)$ is a process of class $\mathcal{C}$ follows from the symmetries  of $Z$, which are the gauge
symmetry group $\mathcal{G}$ with action $\Xi_g$ and the time symmetry with action $\Gamma_r$.\\
The fact that, if $(X,Z)$ is a solution to $\Psi$, then $P_T(X,Z)$ is a solution to $E_T(\Psi)$, follows from Theorem
\ref{theorem_geometrical1}, Theorem \ref{theorem_gauge1} and Lemma \ref{lemma_time2}.
${}\hfill$\end{proof}

\begin{remark}\label{remark_senza}
If $\mathcal{C}$ contains semimartingales admitting a gauge symmetry group $\mathcal{G}$ but without time symmetry,  the stochastic
transformation reduces to a pair $(\Phi,B)$ and the action on processes and SDEs is the same as in the general case with $\Gamma_r=Id_N$. The
same argument can be applied in the case of $\mathcal{C}$ containing semimartingales with only time symmetry.
\end{remark}

In the case of semimartingales
without neither  gauge  nor  time symmetries,  the stochastic transformations can be identified with the diffeomorphisms $\Phi:M \rightarrow M'$
and the action on the processes is $P_T(X,Z)=(\Phi(X),Z)$. Since these kinds of transformations do not change the driving process $Z$ and  play
a special role in the theory of symmetries we call  a stochastic transformation of the form
$(\Phi,1_N,1)$ a \emph{strong stochastic transformation}. Hereafter, in order to stress the difference between strong and more general  stochastic transformations, when necessary we use the name \emph{weak stochastic transformations} for the latter.

\subsection{The geometry of stochastic transformations}

In this subsection we  prove that stochastic transformations have some interesting geometric properties, allowing us to extend  to c\acc{a}dl\acc{a}g-semimartingales-driven SDEs the results given in \cite{DMU1} for SDEs driven by  Brownian motions. \\
In order to keep holding some crucial  geometric properties, in the following we require   an additional property on the maps $\Xi_g$ and
$\Gamma_r$, i.e. the commutation of the   two group actions $\Xi_g$ and $\Gamma_r$. In particular we suppose that
\begin{equation}\label{ST1}
\Xi_g(\Gamma_r(z))=\Gamma_r(\Xi_g(z)),
\end{equation}
for any $z \in N$, $g \in \mathcal{G}$ and $r \in \mathbb{R}_+$.\\

We can define a composition between two stochastic transformations $T \in S(M,M')$ and $T' \in S(M',M'')$, where
$T=(\Phi,B,\eta)$ and $T'=(\Phi',B',\eta')$,  as
\begin{equation}\label{equation_symmetry1}
T' \circ T=(\Phi' \circ \Phi,(B' \circ \Phi) \cdot B, (\eta' \circ \Phi)\eta ).
\end{equation}
The above composition has a nice geometrical interpretation. If we denote by $\mathcal{H}=\mathcal{G} \times \mathbb{R}_+$, a stochastic transformation from $M$ into $M'$ can be identified with an isomorphism from the
trivial right principal bundle $M \times \mathcal{H}$ into the trivial right principal bundle $M' \times \mathcal{H}$ which preserves the
principal bundle structure. Exploiting this identification and the natural isomorphisms composition, we obtain  formula
\refeqn{equation_symmetry1}
(see \cite{DMU1} for the case $\mathcal{G}=SO(m)$).\\
Given a stochastic transformation $T \in S(M,M')$, composition \refeqn{equation_symmetry1}  permits to define an inverse $T^{-1} \in S(M',M)$
as follows
$$T^{-1}= (\Phi^{-1},(B \circ \Phi^{-1})^{-1},(\eta \circ \Phi^{-1})^{-1}).$$
Hence the set $S(M):=S(M,M)$ is a group with respect to the composition $\circ$ and the identification
 of $S(M)$ with $\Iso(M \times \mathcal{H}, M \times \mathcal{H})$ (which is a closed subgroup of the group of diffeomorphisms of $M \times \mathcal{H}$) suggests to consider  the corresponding Lie algebra $\mathcal{V}(M)$.\\
Given a one parameter group  $T_a=(\Phi_a,B_a,\eta_a) \in S(M)$,  there exist a vector field $Y$ on $M$, a smooth function $C:M
\rightarrow \mathfrak{g}$ (where $\mathfrak{g}$ is the Lie algebra of $\mathcal{G}$), and a smooth function $\tau:M \rightarrow \mathbb{R}$ such
that
\begin{equation}\label{equation_infinitesimal_SDE1}\begin{array}{ccc}
Y(x)&:=&\partial_a(\Phi_a(x))|_{a=0}\\
C(x)&:=&\partial_a(B_a(x))|_{a=0}\\
\tau(x)&:=&\partial_a(\eta_a(x))|_{a=0}.
\end{array}
\end{equation}
So if  $Y,C,\tau$ are  as above, the one parameter solution $(\Phi_a,B_a,\eta_a)$ to the equations
\begin{equation}\label{equation_infinitesimal_SDE2}\begin{array}{rcl}
\partial_a(\Phi_a(x))&=&Y(\Phi_a(x))\\
\partial_a(B_a(x))&=&R_{B_a(x)*}(C(\Phi_a(x)))\\
\partial_a(\eta_a(x))&=&\tau(\Phi_a(x))\eta_a(x),
\end{array}
\end{equation}
with initial condition $\Phi_0=id_M$, $B_0=1_{\mathcal{G}}$ and $\eta_0=1$, is a one parameter group in $S(M)$. For this reason we identify the elements of $\mathcal{V}(M)$ with the triples $(Y,C,\tau)$.

\begin{definition}\label{def_infstochtransf}
A triple $V=(Y,C,\tau)\in \mathcal{V}(M)$, where  $Y$ is a vector field on $M$,  $C:M \rightarrow \mathfrak{g}$  and  $\tau:M \rightarrow
\mathbb{R}$  are smooth functions, is an  infinitesimal stochastic transformation. If $V$ is of the form  $V=(Y,0,0)$  we call $V$ a strong
infinitesimal stochastic transformation, as the corresponding one-parameter group is a group of strong stochastic transformations.
\end{definition}

As we mentioned at the end of subsection \ref{Subsec1},  in order to stress the difference between strong and more general infinitesimal stochastic transformations, when necessary we use the name \emph{weak infinitesimal stochastic transformations}.\\
Since $\mathcal{V}(M)$ is a Lie subalgebra of the set of vector fields on $M \times \mathcal{H}$, the standard Lie brackets between vector
fields on $M \times \mathcal{H}$ induces some Lie brackets on $\mathcal{V}(M)$. Indeed, if $V_1=(Y_1,C_1,\tau_1),V_2=(Y_2,C_2,\tau_2) \in
\mathcal{V}(M)$ are two infinitesimal stochastic transformations, we have
\begin{equation}\label{equation_infinitesimal_SDE5}
\left[V_1,V_2\right]=(\left[Y_1,Y_2\right],Y_1(C_2)-Y_2(C_2)-\{C_1,C_2\},Y_1(\tau_2)-Y_2(\tau_1)),
\end{equation}
where $\{\cdot,\cdot\}$ denotes the usual commutator between elements of $\mathfrak{g}$.\\
Furthermore the identification of  $T=(\Phi,B,\eta) \in S(M,M')$  with  $F_T\in \Iso(M \times \mathcal{H}, M' \times \mathcal{H})$
allows us to define  the push-forward $T_*(V)$  of $V \in \mathcal{V}(M)$ as
\begin{equation}\label{equation_infinitesimal_SDE4}
(\Phi_*(Y) ,(Ad_{B}(C)+R_{B^{-1}*}(Y(B))) \circ \Phi^{-1},(\tau+Y(\eta)\eta^{-1})\circ \Phi^{-1}),
\end{equation}
where $Ad$ denotes the adjoint operation and the symbol $Y(B)$  the push-forward of $Y$ with respect to the map $B:M \rightarrow \mathcal{G}$.\\
Analogously, given $V' \in \mathcal{V}(M')$, we can consider  the pull-back of $V'$  defined as $T^*(V')=(T^{-1})_*(V')$.
Any Lie algebra of general infinitesimal stochastic transformations satisfying a non-degeneracy condition, can be locally
transformed, by action of the push-forward of a suitable stochastic transformation $T \in S(M)$,  into a Lie algebra of strong
infinitesimal stochastic transformations (see Theorem \ref{theorem_infinitesimal_SDE1} below).

\subsection{Symmetries of a SDE with jumps}

\begin{definition}\label{definition_symmetries}
A stochastic transformation $T \in S(M)$ is a symmetry of the SDE $\Psi$ if,
for any process $(X,Z)$ of class $\mathcal{C}$ solution to the SDE $\Psi$,  also $P_T(X,Z)$ is a solution to the SDE $\Psi$.\\
An infinitesimal stochastic transformation $V \in \mathcal{V}(M)$ is a symmetry of the SDE $\Psi$ if the one-parameter group of
stochastic transformations $T_a$ generated by $V$ is a group of symmetry of the SDE $\Psi$.
\end{definition}

In the following we often use the name \emph{strong symmetries} and \emph{weak symmetries} for stressing the fact that the considered symmetry is a strong or a weak (infinitesimal or finite) stochastic transformation in the sense mentioned above, after Definition \ref{def_infstochtransf}.

\begin{remark}
We can give also a local version of Definition \ref{definition_symmetries}: a stochastic transformation $T \in S(U,U')$, where
$(U,U')$ are two open sets of $M$, is a symmetry of $\Psi$ if $P_T$ transforms solutions to $\Psi|_{U}$ into solutions to
$\Psi|_{U'}$.\\
In this case it is necessary to stop the solution process $X$ and the driving semimartingale $Z$ with respect to a suitably adapted  stopping
time.
\end{remark}

\begin{theorem}\label{theorem_symmetry1}
A sufficient condition for a stochastic transformation $T\in S(M)$ to be a symmetry of the SDE $\Psi$ is that $E_T(\Psi)=\Psi$.
\end{theorem}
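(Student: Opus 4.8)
The plan is to obtain this statement as an essentially immediate consequence of Theorem \ref{theorem_symmetry3}, which already contains the substantive work: the transformation behaviour of geometrical SDEs under the operators $P_T$ and $E_T$, resting in turn on Theorem \ref{theorem_geometrical1}, Theorem \ref{theorem_gauge1} and Lemma \ref{lemma_time2}, together with the gauge and time symmetry properties of the driving noise. What remains is only to reconcile the two a priori different descriptions of a symmetry: Definition \ref{definition_symmetries} phrases it at the level of the solution set, whereas the hypothesis $E_T(\Psi)=\Psi$ is a statement about the explicit action of $T$ on the coefficient map.

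First I would fix an arbitrary process $(X,Z)$ of class $\mathcal{C}$ that solves the geometrical SDE $\Psi$. By Theorem \ref{theorem_symmetry3}, $P_T(X,Z)$ is again a process of class $\mathcal{C}$ and it is a solution of the geometrical SDE $E_T(\Psi)$. Invoking the hypothesis $E_T(\Psi)=\Psi$ (read as an equality of geometrical SDEs in the sense of Definition \ref{definition_solution}, i.e. the defining maps coincide, equivalently the auxiliary maps $\overline{\Psi}'_{K_t}=\overline{\Psi}_{K_t}$ agree through \eqref{equation_manifold0}), we conclude that $P_T(X,Z)$ is a class $\mathcal{C}$ solution of $\Psi$ itself. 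Since $(X,Z)$ was an arbitrary class $\mathcal{C}$ solution of $\Psi$, this is precisely the defining property in Definition \ref{definition_symmetries} for $T\in S(M)$ to be a symmetry of $\Psi$, which finishes the argument.

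There is no genuine obstacle here; the only point requiring minimal care is that $E_T(\Psi)=\Psi$ must be an identity of geometrical SDEs, so that Theorem \ref{theorem_symmetry3} can be applied with the target SDE literally equal to $\Psi$. I would also append a one-line remark explaining why the condition is only sufficient and not necessary: two distinct coefficient maps can define geometrical SDEs with the same set of class $\mathcal{C}$ solutions (for instance when $\mathcal{C}$ is not rich enough to separate them), so the converse need not hold, which is why the statement is phrased as it is. Finally, for the infinitesimal version alluded to in Definition \ref{definition_symmetries}, one applies the finite statement just proved to the one-parameter group $T_a$ generated by $V$, under the corresponding condition $E_{T_a}(\Psi)=\Psi$ for every parameter value $a$.
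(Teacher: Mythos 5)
Your argument is exactly the paper's: the result is obtained as an immediate consequence of Theorem \ref{theorem_symmetry3}, since for any class $\mathcal{C}$ solution $(X,Z)$ of $\Psi$ the transformed process $P_T(X,Z)$ solves $E_T(\Psi)=\Psi$, which is Definition \ref{definition_symmetries}. The proposal is correct and matches the paper's proof, with your remarks on sufficiency versus necessity consistent with the discussion following the theorem in the text.
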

\begin{proof}
This is an easy application of Theorem \ref{theorem_symmetry3}.
${}\hfill$\end{proof}\\

A natural question arising from  previous discussion is whether the sufficient condition of Theorem \ref{theorem_symmetry1} is also necessary. Unfortunately,
 even for Brownian motion driven SDEs there are counterexamples
(see \cite{DMU1}, where the determining equations for symmetries of SDE are different from the equations found here). The reason for this fact is
that, for a general law of the driving semimartingale in the class $\mathcal{C}$, it is possible to find two different geometrical SDEs $\Psi $ and
 $\Psi'$ with the same solution $(X,Z)$ of class $\mathcal{C}$, i.e.  any solution $(X,Z)$ of $\Psi$
is also a solution of $\Psi'$ and viceversa. \\
Exploiting this fact it is possible to find suitable conditions in order to prove  the converse of Theorem \ref{theorem_symmetry1}.\\
In  the following we say that a semimartingale $Z$ in the class $\mathcal{C}$ and with (stochastic) characteristic triplet $(b,A,\nu)$ (see \cite{Annals}) has \emph{jumps of
any size} if the support of $\nu$ is all of $N \times \mathbb{R}_+$ with positive probability. If we restrict to L\'evy processes this is equivalent to require that the support of $\nu_0$ coincides with $N$. In the case where $Z$ is a L\'evy process with characteristic $(b_0,A_0,\nu_0)$ the previous request is equivalent to require that the support of the measure $\nu_0$ is all of $N$.

\begin{lemma}\label{lemma_full}
Given a semimartingale $Z$ in the class $\mathcal{C}$ with jumps of any size and such that the stopping time $\tau$ of the first jump
 is almost surely strictly positive,  if $(X,Z)$ is a solution to both the SDEs $\Psi$ and $\Psi'$ such that $X_0=x_0 \in M$ almost surely,  then
$\Psi(x_0,z)=\Psi'(x_0,z)$ for any $z \in N$.
\end{lemma}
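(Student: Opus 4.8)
The plan is to recover the map $\Psi(x_0,\cdot)$ from the behaviour of the solution $X$ at the jump times of $Z$, and then to use the hypothesis that $Z$ has jumps of any size, together with $X_0=x_0$, so that these jumps probe $\Psi(x_0,\cdot)$ at every point of $N$.

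First I would identify the jump of a solution at an arbitrary time. Reading off the jump of the right-hand side of the defining equation \eqref{equation_manifold2} at a time $u$ — the $dZ$-integral contributes $\partial_{z'^{\alpha}}(\overline{\Psi}^i)(X_{u_-},Z_{u_-},Z_{u_-})\Delta Z^{\alpha}_u$, the (continuous) quadratic-variation integral contributes nothing, and the compensating sum contributes the single summand $\overline{\Psi}^i(X_{u_-},Z_u,Z_{u_-})-\overline{\Psi}^i(X_{u_-},Z_{u_-},Z_{u_-})-\partial_{z'^{\alpha}}(\overline{\Psi}^i)(X_{u_-},Z_{u_-},Z_{u_-})\Delta Z^{\alpha}_u$ — the two terms proportional to $\Delta Z_u$ cancel and, using $\overline{\Psi}(x,z,z)=\Psi(x,1_N)=x$ from \eqref{equation_manifold0}, one gets $X_u=\overline{\Psi}(X_{u_-},Z_u,Z_{u_-})=\Psi(X_{u_-},\Delta Z_u)$ for every $u$, almost surely. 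Since $(X,Z)$ is a solution of $\Psi'$ as well, the same computation gives $X_u=\Psi'(X_{u_-},\Delta Z_u)$ for every $u$, almost surely. Hence, putting $D:=\{(x,z)\in M\times N:\Psi(x,z)=\Psi'(x,z)\}$ — a closed set, since $\Psi$ and $\Psi'$ are continuous — we obtain that almost surely $(X_{s_-},\Delta Z_s)\in D$ at every jump time $s$ of $Z$.

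The second step turns this into a support statement and is where I expect the real work to lie. Since $D$ carries all the jumps of $(X_{s_-},\Delta Z_s)$, the jump measure $\mu^Z(ds,dz)$ of $Z$ gives no mass to $D^c$ along the predictable process $s\mapsto X_{s_-}$; passing to the predictable compensator $\nu$ of $\mu^Z$ — the third characteristic of $Z$, which is licit because $\mathbf 1_{D^c}(X_{s_-},z)\ge 0$ is a predictable integrand — yields $\int_{\mathbb{R}_+\times N}\mathbf 1_{D^c}(X_{s_-},z)\,\nu(ds,dz)=0$ almost surely. Now suppose $(x_0,z_0)\notin D$ for some $z_0\in N$; using closedness of $D$, I would choose open neighbourhoods $U\ni x_0$ in $M$ and $V\ni z_0$ in $N$ with $(U\times V)\cap D=\varnothing$, so that $\int_{\mathbb{R}_+}\mathbf 1_U(X_{s_-})\,\nu(ds,V)=0$ almost surely. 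But ``jumps of any size'' means that on an event $\Omega_0$ of positive probability the support of $\nu(\omega;\cdot)$ is all of $N\times\mathbb{R}_+$, whence $\nu(\omega;(0,\epsilon)\times V)>0$ for every $\epsilon>0$; and since $X$ is c\acc{a}dl\acc{a}g with $X_0=x_0\in U$ and $U$ open, for almost every $\omega$ there is $\epsilon(\omega)>0$ with $X_{s_-}(\omega)\in U$ for all $s\in(0,\epsilon(\omega)]$ (here the hypothesis that the first jump time $\tau$ is a.s.\ strictly positive guarantees that $X$ is a genuine solution issued from the fixed point $x_0$). Putting these together gives $\int_{\mathbb{R}_+}\mathbf 1_U(X_{s_-})\,\nu(ds,V)\ge\nu\big(\omega;(0,\epsilon(\omega))\times V\big)>0$ on $\Omega_0$, a contradiction; hence $\{x_0\}\times N\subseteq D$, i.e.\ $\Psi(x_0,z)=\Psi'(x_0,z)$ for every $z\in N$.

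The main obstacle, as indicated, is this support argument: one has to couple the event that a jump $\Delta Z_s$ lands in a prescribed neighbourhood of $z_0$ with the event that the state $X_{s_-}$ just before that jump is still close to $x_0$, and integrating against the compensator $\nu$ rather than conditioning on the first jump is what makes this go through without independence assumptions, so that the argument covers a general, possibly non-Markovian, $Z\in\mathcal C$. Secondary points to watch are the measurability bookkeeping in replacing $\mu^Z$ by $\nu$, and the precise reading of \eqref{equation_manifold2} (the quadratic-variation term should be the continuous part $[Z^{\alpha},Z^{\beta}]^c$, consistently with It\^o's Lemma \ref{lemma_Ito} and with the proof of Theorem \ref{theorem_geometrical1}).
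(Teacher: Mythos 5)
Your proof is correct and follows essentially the same route as the paper's: both rest on the jump identity $X_s=\Psi(X_{s_-},\Delta Z_s)=\Psi'(X_{s_-},\Delta Z_s)$, pass from the jump measure $\mu^Z$ to its compensator $\nu$, and combine the ``jumps of any size'' hypothesis with continuity of $X_{s_-}$ near $s=0$ to localize the conclusion at $x_0$. The only difference is one of packaging: the paper channels the jump identity through smooth test functions $h\circ f$ and the uniqueness of the compensator of the special semimartingale $H^{h,f}$, whereas you use the indicator of the closed agreement set $D$ together with the defining property of the compensator for nonnegative predictable integrands, which is an equally valid (and slightly more direct) implementation of the same idea.
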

\begin{proof}
Consider the semimartingale $S^f_t=f(X_t)$, where $f \in \cinf(M)$ is a bounded smooth function. Given  a bounded smooth function $h \in
\cinf(\mathbb{R})$ such that $h(x)=0$ for $x $ in a neighborhood of $0$, we define the (special) semimartingale
$$H^{h,f}_t=\sum_{0 \leq s \leq t}h(\Delta S^f_s).$$
Since the jumps $\Delta S^f_t$ of $S^f$ are exactly $\Delta S^f_t=f(\Psi(X_{t_-},\Delta Z_t))-f(X_{t_-})$ or, equivalently, $\Delta
S^f_t=f(\Psi'(X_{t_-},\Delta Z_t))-f(X_{t_-})$ we have that \small
$$H^{h,f}_t=\int_{N \times [0,t]}{h(f(\Psi(X_{s_-},
z))-f(X_{s_-}))\mu^Z(ds,dz)}=\int_{N \times [0,t]}{h(f(\Psi'(X_{s_-}, z))-f(X_{s_-}))\mu^Z(ds,dz)}.$$ \normalsize
Since $H^{h,f}$ is a special
semimartingale there exists a unique (up to $\mathbb{P}$ null sets) predictable process $R^{h,f}$ of bounded variation such that
$H^{h,f}_t-R^{h,f}_t$ is a local martingale. By the definition of characteristic measure $\nu$  it is simple to prove that
\small
$$R^{h,f}_t=\int_{N \times [0,t]}{h(f(\Psi(X_{s_-}, z))-f(X_{s_-}))\nu(ds,dz)}=\int_{N \times [0,t]}{h(f(\Psi'(X_{s_-}, z))-f(X_{s_-}))\nu(ds,dz)}.$$
\normalsize This means that
$$\int_{N \times [0,t]}{(h(f(\Psi(X_{s_-}, z))-f(X_{s_-}))-h(f(\Psi'(X_{s_-}, z))-f(X_{s_-})))\nu(ds,dz)}$$
is a semimartingale almost surely equal to $0$. Since $X_{t_-}$ is a continuous function for $t\leq \tau$ and  the support of $\nu$ is all $N
\times \mathbb{R}_+$, in a set of positive measure, there exists a set of positive probability such that $h(f(\Psi(X_{t_-},
z))-f(X_{t_-}))-h(f(\Psi'(X_{t_-}, z))-f(X_{t_-}))=0$ for any $z \in N$. Taking the limit $t \rightarrow 0$ we obtain $h(f(\Psi(x_0,
z))-f(x_0))=h(f(\Psi'(x_0, z))-f(x_{0}))$. Since $h,f$ are generic functions, $\Psi(x_0,z)=\Psi'(x_0,z)$ for any $z \in N$.
${}\hfill$\end{proof}

\begin{theorem}\label{theorem_full}
Given a manifold $M$, under the hypothesis of Lemma \ref{lemma_full}, a stochastic transformation $T \in S(M)$  is a symmetry of a SDE $\Psi$ if and only if
$E_T(\Psi)=\Psi$.
\end{theorem}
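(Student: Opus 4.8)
The plan is to prove the two implications separately; only the forward direction uses the extra hypotheses carried over from Lemma \ref{lemma_full}. The implication \virgolette{$E_T(\Psi)=\Psi$ implies $T$ is a symmetry} is exactly Theorem \ref{theorem_symmetry1}, so nothing has to be added there. For the converse, write $T=(\Phi,B,\eta)\in S(M)$ and assume $T$ is a symmetry of $\Psi$; the goal is to show that $E_T(\Psi)$ and $\Psi$ coincide as maps $M\times N\rightarrow M$, i.e. $E_T(\Psi)(x,z)=\Psi(x,z)$ for every $x\in M$ and $z\in N$.

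First I would fix an arbitrary point $x_0\in M$ and produce a class $\mathcal{C}$ solution $(X,Z)$ of $\Psi$ with $X_0=x_0$ almost surely: the class $\mathcal{C}$ is non-empty (realize a semimartingale with the prescribed law on a suitable probability space), and Theorem \ref{theorem_manifold1} provides a solution of $\Psi$ started at $x_0$, at worst up to an almost surely strictly positive stopping time. Set $(X',Z')=P_T(X,Z)$. By Theorem \ref{theorem_symmetry3}, $(X',Z')$ is a class $\mathcal{C}$ solution of $E_T(\Psi)$; since $T$ is a symmetry of $\Psi$, the same pair is also a solution of $\Psi$. Thus $(X',Z')$ solves the two geometrical SDEs $\Psi$ and $E_T(\Psi)$ simultaneously, and $Z'\in\mathcal{C}$ inherits the properties required in Lemma \ref{lemma_full}, since \virgolette{jumps of any size} and \virgolette{first jump time almost surely positive} are features of the common law of the elements of $\mathcal{C}$.

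Next I would read off the initial value of $X'$. Since $\beta^{\eta}_0=\int_0^0\eta(X_s)\,ds=0$ and $\eta>0$ makes $\beta^{\eta}$ strictly increasing, one has $\alpha_0=0$, hence $H_{\beta^{\eta}}(X)_0=X_{\alpha_0}=X_0=x_0$ and therefore $X'_0=\Phi(x_0)$ almost surely. Applying Lemma \ref{lemma_full} to $(X',Z')$ with initial point $\Phi(x_0)$ (the roles of $\Psi$ and $\Psi'$ there being played by $\Psi$ and $E_T(\Psi)$) yields $\Psi(\Phi(x_0),z)=E_T(\Psi)(\Phi(x_0),z)$ for every $z\in N$. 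Since $\Phi$ is a diffeomorphism of $M$ onto itself, $\Phi(x_0)$ ranges over all of $M$ as $x_0$ does, so $\Psi$ and $E_T(\Psi)$ agree on $M\times N$, i.e. $E_T(\Psi)=\Psi$.

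The main obstacle will be technical rather than conceptual: Theorem \ref{theorem_manifold1} guarantees a solution of $\Psi$ merely up to a (positive) stopping time, so one should not assume a class $\mathcal{C}$ solution defined on all of $[0,T]$. This is harmless because Lemma \ref{lemma_full} extracts its conclusion in the limit $t\to 0$, on the event where the first jump of $Z'$ has not yet occurred; still, the argument should be phrased through the local notion of symmetry (cf. the remark following Definition \ref{definition_symmetries}), stopping both $X$ and $Z$ at a suitable almost surely positive time, and one must check that $P_T$ sends such a locally defined solution to a locally defined solution with initial condition $\Phi(x_0)$ — which is precisely what Theorem \ref{theorem_symmetry3}, together with the gauge and time invariance of the law of $Z$, supplies. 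No genuinely new estimate is needed beyond Lemma \ref{lemma_full}.
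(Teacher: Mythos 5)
Your proof is correct and follows essentially the same route as the paper: the \virgolette{if} part via Theorem \ref{theorem_symmetry1}, and the converse by applying $P_T$ to a solution started at $x_0$, invoking Theorem \ref{theorem_symmetry3} to get a simultaneous solution of $\Psi$ and $E_T(\Psi)$ with initial value $\Phi(x_0)$, and then concluding with Lemma \ref{lemma_full} and the fact that $\Phi$ is a diffeomorphism. Your additional remarks on localization via stopping times and on the law of $Z'$ inheriting the hypotheses of Lemma \ref{lemma_full} only make explicit details the paper leaves implicit.
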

\begin{proof}
The if part is exactly Theorem \ref{theorem_symmetry1}. \\
Conversely, suppose  that $T$ is a symmetry of $\Psi$ and put $\Psi'=E_T(\Psi)$. If $X^{x_0}$ denotes  the unique solution to the SDE $\Psi$
driven by the semimartingale $Z$ such that $X^{x_0}=x_0$ almost surely, put $(X',Z')=E_T(X^{x_0},Z)$.  By definition of symmetry $(X',Z')$ is a
solution to $\Psi$ and, by Theorem \ref{theorem_symmetry3}, it is a solution to $\Psi'$. Since $X'_0=\Phi(x_0)$ almost surely, using Lemma
\ref{lemma_full} we obtain that $\Psi(\Phi(x_0),z)=\Psi'(\Phi(x_0),z)$. Since $\Phi$ is a diffeomorphism and $x_0 \in M$ is a generic point this
concludes the proof.
${}\hfill$\end{proof}

\begin{remark}
We propose here two possible generalizations of Theorem \ref{theorem_full}\\
First we can suppose that $Z$ is a purely discontinuous semimartingale and that $b^{\alpha}_t=A^{\alpha,\beta}_t=0, \forall t \geq 0$ with truncated
functions $h^{\alpha}=0$. In this case, if  the support of $\nu$ is $J \times \mathbb{R}_+$ almost surely,  the stochastic transformation  $T$
is a symmetry of the SDE $\Psi$ if and only if $E_T(\Psi)(x,z)=\Psi(x,z)$ for any $z \in J$. The proof of the necessity of the condition is
equal to the one in Lemma \ref{lemma_full} and Theorem \ref{theorem_full}, instead the proof of the sufficiency part is essentially based on the
fact that $Z$ is a pure jump process.
This case includes, for example, the Poisson process.\\
The second generalization covers the important case of continuous semimartingales. An example of the theorem which could be obtained in this
case is Theorem 17 in \cite{DMU1} that, in our language,   can be reformulated as follows: $T$ is a symmetry of $\Psi$ driven by a Brownian
motion $Z^2,...,Z^m$ and by the time $Z^1_t=t$ if and only if $\partial_{z^{\alpha}}(\Psi)(x,0)=\partial_{z^{\alpha}}(E_T(\Psi))(x,0)$ for
$\alpha=2,...,m$ and $\partial_{z^1}(\Psi)(x,0)+\frac{1}{2}\sum_{\alpha=2}^m
\partial_{z^{\alpha}z^{\alpha}}(\Psi)(x,0)=\partial_{z^1}(E_T(\Psi))(x,0)
+\frac{1}{2}\sum_{\alpha=2}^m \partial_{z^{\alpha}z^{\alpha}}(E_T(\Psi))(x,0)$. \\
\end{remark}

In order to provide an explicit formulation of the \emph{determining equations} for the infinitesimal symmetries of a SDE $\Psi$, we prove the
following proposition.

\begin{proposition}\label{proposition_determining}
A sufficient condition for an infinitesimal stochastic transformation  $V\in \mathcal{V}(M)$, generating a one-parameter group $T_a$ of stochastic transformations,
to be an infinitesimal symmetry of a SDE $\Psi$ is that
\begin{equation}\label{equation_determining1}
\partial_a(E_{T_a}(\Psi))|_{a=0}=0.
\end{equation}
When the hypotheses of Theorem \ref{theorem_full} hold, condition \refeqn{equation_determining1} is also necessary.
\end{proposition}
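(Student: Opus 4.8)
The plan is to reduce the statement to Theorem \ref{theorem_symmetry1}, Theorem \ref{theorem_full} and the one‑parameter‑group structure of $T_a$, using one auxiliary fact: the assignment $T\mapsto E_T$ is an action of the group $S(M)$ on the set of geometrical SDEs, i.e. $E_{\mathrm{id}_M}=\mathrm{id}$ and $E_{T'\circ T}=E_{T'}\circ E_T$. This last identity is a direct computation from the explicit formula for $E_T$, the composition rule \refeqn{equation_symmetry1} and the hypothesis \refeqn{ST1}: one substitutes the expressions, uses that $\Xi$ and $\Gamma$ are group actions (so $\Xi_{gg'}=\Xi_g\circ\Xi_{g'}$, $\Gamma_{rr'}=\Gamma_r\circ\Gamma_{r'}$), and uses \refeqn{ST1} to slide the $\Xi$‑ and $\Gamma$‑factors past each other so that the reparametrizations of the $N$‑variable combine as required. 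Since $T_a$ is a one‑parameter group, $T_{a+s}=T_a\circ T_s$, and hence $E_{T_{a+s}}(\Psi)=E_{T_a}\bigl(E_{T_s}(\Psi)\bigr)$ for every $a$ and all small $s$.

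For sufficiency I would assume \refeqn{equation_determining1}, fix $a$, and differentiate $E_{T_{a+s}}(\Psi)=E_{T_a}(E_{T_s}(\Psi))$ in $s$ at $s=0$. On the right‑hand side $E_{T_a}$ acts on an SDE $\Theta$ by $E_{T_a}(\Theta)(x,z)=\Phi_a\bigl(\Theta(\Phi_a^{-1}(x),\zeta_{a,x}(z))\bigr)$, where $\Phi_a$ is a diffeomorphism and $\zeta_{a,x}$, the composite gauge/time reparametrization of the $N$‑variable, are both independent of $s$; hence the chain rule gives
$$\partial_a\bigl(E_{T_a}(\Psi)\bigr)(x,z)=(\Phi_a)_*\Bigl(\partial_s\bigl(E_{T_s}(\Psi)\bigr)\bigl(\Phi_a^{-1}(x),\zeta_{a,x}(z)\bigr)\big|_{s=0}\Bigr)=0$$
by \refeqn{equation_determining1}. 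Thus $a\mapsto E_{T_a}(\Psi)$ is constant and, since $E_{T_0}=\mathrm{id}$, equals $\Psi$ for every $a$; Theorem \ref{theorem_symmetry1} then shows every $T_a$ is a symmetry of $\Psi$, i.e. $V$ is an infinitesimal symmetry. For necessity, under the hypotheses of Theorem \ref{theorem_full}: if $V$ is an infinitesimal symmetry then each $T_a$ is a (finite) symmetry of $\Psi$, so the "only if" direction of Theorem \ref{theorem_full} forces $E_{T_a}(\Psi)=\Psi$ for all $a$, and differentiating this identity at $a=0$ yields \refeqn{equation_determining1}.

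The only computationally nontrivial ingredient is the action property $E_{T'\circ T}=E_{T'}\circ E_T$, and it is precisely there that \refeqn{ST1} is used; I expect the bookkeeping with the nested diffeomorphisms and with the $\Xi,\Gamma$ factors to be the main obstacle. The conceptual core — the vanishing of $\partial_a(E_{T_a}(\Psi))$ once it vanishes at $a=0$ — is the elementary statement that a point annihilated by the infinitesimal generator of a Lie group action is fixed by the whole one‑parameter subgroup, applied to the action $T\mapsto E_T$; since the underlying "space" here is the infinite‑dimensional set of SDEs I would just spell out, as above, that $E_{T_a}$ introduces no hidden $s$‑dependence, so that the differentiation genuinely collapses to $(\Phi_a)_*$ of the vanishing quantity \refeqn{equation_determining1}.
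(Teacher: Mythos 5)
Your argument is correct, but for the sufficiency part it follows a genuinely different route from the paper. The paper sets $\Psi(a,x,z)=E_{T_a}(\Psi)(x,z)$ and observes (in effect differentiating $E_{T_{a+s}}(\Psi)=E_{T_s}\bigl(E_{T_a}(\Psi)\bigr)$ in $s$ at $s=0$) that this function solves an autonomous evolution PDE $\partial_a\Psi(a,x,z)=L(\Psi(a,x,z))+F(\Psi(a,x,z),x,z)$, with $L$ a first order operator in $\partial_x,\partial_z$; condition \refeqn{equation_determining1} makes the constant-in-$a$ function $\Psi(x,z)$ a solution with the same initial datum, and the conclusion $E_{T_a}(\Psi)=\Psi$ rests on a uniqueness statement for this evolution equation, proved by the method of characteristics (with reference to earlier work). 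You instead use the factorization $T_{a+s}=T_a\circ T_s$ in the opposite order, together with the action property $E_{T'\circ T}=E_{T'}\circ E_T$, so that the $s$-derivative falls on the inner factor $E_{T_s}(\Psi)$ inside the fixed, $s$-independent map $E_{T_a}$; since $E_{T_a}(\Theta)(x,z)=\Phi_a\bigl(\Theta(\Phi_a^{-1}(x),\zeta_{a,x}(z))\bigr)$ depends on $\Theta$ only through its value at a point not depending on $\Theta$, the chain rule collapses $\partial_a E_{T_a}(\Psi)$ to $D\Phi_a$ applied to the vanishing quantity \refeqn{equation_determining1}, and constancy of $a\mapsto E_{T_a}(\Psi)$ follows with no well-posedness input at all; Theorem \ref{theorem_symmetry1} then finishes the argument exactly as in the paper. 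The price you pay is that the action property must be verified, and your sketch of it is the right one: it is a direct computation from the formula for $E_T$, the composition rule \refeqn{equation_symmetry1}, the fact that $\Xi$ and $\Gamma$ are group actions, and the commutation hypothesis \refeqn{ST1} (indeed one finds $\Xi_{(B(y))^{-1}}\circ\Xi_{(B'(\Phi(y)))^{-1}}=\Xi_{(B'(\Phi(y))\cdot B(y))^{-1}}$ and the analogous identity for $\Gamma$, matching the composite triple $T'\circ T$). What the paper's route buys is that the linearization it performs is precisely the source of the determining equations \refeqn{equation_determining2} in coordinates; what your route buys is a more elementary and self-contained proof of the proposition itself, avoiding any appeal to existence/uniqueness for the first order PDE. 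The necessity part of your argument (each $T_a$ is a symmetry, hence $E_{T_a}(\Psi)=\Psi$ by Theorem \ref{theorem_full}, then differentiate at $a=0$) coincides with the paper's. One small point worth making explicit in either approach is the smoothness of $a\mapsto(\Phi_a,B_a,\eta_a)$, which follows from the defining ODEs \refeqn{equation_infinitesimal_SDE2} and justifies the differentiations, and the fact that the one-parameter group may only be defined locally in $a$ and $x$, which is harmless since the conclusion is local as well.
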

\begin{proof}
We prove that if equation \refeqn{equation_determining1} holds, then $E_{T_a}(\Psi)=\Psi$ for any $a \in \mathbb{R}$. Defining
$\Psi(a,x,z)=E_{T_a}(\Psi)$, the function $\Psi(a,x,z)$ solves a partial differential equation of the form
\begin{equation}\label{equation_determining3}
\partial_a(\Psi(a,x,z))=L(\Psi(a,x,z))+F(\Psi(a,x,z),x,z), \quad x\in M, z\in N
\end{equation}
where $L$ is a linear first order scalar differential operator in $\partial_x,\partial_z$ and $F$ is a smooth function. It is possible to prove,
exploiting standard techniques of characteristics of first order PDE (see \cite{DMU1,DMU2}), that equation \refeqn{equation_determining3}
admits a unique local solution as evolution PDE in the time parameter $a$ for any smooth initial value $\Psi(0,x,z)$.\\
Since   $\Psi(0,x,z)=E_{T_0}(\Psi)(x,z)=\Psi(x,z)$ and
$L(\Psi(x,z))+F(\Psi(x,z),x,z)=\partial_a(E_{T_a}(\Psi))|_{a=0}=0$, we have that $E_{T_a}(\Psi)(x,z)=\Psi(a,x,z)=\Psi(x,z)$. \\
The necessity of condition \refeqn{equation_determining1} under the hypotheses of Theorem \ref{theorem_full} is trivial since, by Theorem
\ref{theorem_full}, we must have $E_{T_a}(\Psi)=\Psi$.
${}\hfill$\end{proof}\\

Given a coordinate system $x^i$ on $M$ and  $z^{\alpha}$ on $N$, we can use Proposition \ref{proposition_determining} to rewrite equations \refeqn{equation_determining1} in a more explicit form.
If we denote by $K_1,...,K_r$ the vector fields  on $N$ generating the action $\Xi_g$ of
$\mathcal{G}$ on $N$ and by $H$ the vector field generating  the action $\Gamma_r$ of $\mathbb{R}_+$ on $N$,  with any
infinitesimal stochastic transformation $V=(Y,C,\tau)$ we can associate a vector field $Y$ on $M$, a function $\tau$ and $r$ functions
$C^1(x),...,C^r(x)$
 which correspond to the components of $C$ with respect to the basis $K_1,...,K_r$. Therefore,
 the vector fields $Y$ and  $K_1,...,K_r,H$ are of the form
$$
Y=Y^i(x)\partial_{x^i} \ \  K_{\ell}=K_{\ell}^{\alpha}(z)\partial_{z^{\alpha}} \ \ H=H^{\alpha}(z)\partial_{z^{\alpha}}
$$
and we obtain the following theorem.

\begin{theorem}[Determining equations]
A sufficient condition such that the infinitesimal stochastic transformation $(Y,C,\tau)$ is a symmetry of the SDE $\Psi$ is that
\begin{equation}\label{equation_determining2}
Y^i(\Psi(x,z))-Y^j(x)\partial_{x^j}(\Psi^i)(x,z)-\tau(x) H^{\alpha}(z)\partial_{z^{\alpha}}(\Psi^i)(x,z)-C^{\ell}(x) K^{\alpha}_{\ell}(z)
\partial_{z^{\alpha}}(\Psi^i)(x,z)=0, 
\end{equation}
where $x\in M, z\in N$ and  $\Psi^i(x,z)=x^i \circ \Psi$ and $i=1,...,m$.
Furthermore the previous condition is also necessary if $Z$ has jumps of
any size.
\end{theorem}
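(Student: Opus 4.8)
The plan is to derive the determining equations \refeqn{equation_determining2} directly from the abstract condition \refeqn{equation_determining1} of Proposition \ref{proposition_determining} by differentiating the explicit formula for $E_{T_a}(\Psi)$ in the parameter $a$ at $a=0$, using the infinitesimal data \refeqn{equation_infinitesimal_SDE1}. Recall that for a one-parameter group $T_a=(\Phi_a,B_a,\eta_a)$ the transformed SDE is
\begin{equation*}
E_{T_a}(\Psi)(x,z)=\Phi_a\!\left\{\Psi\!\left[\Phi_a^{-1}(x),\bigl(\Gamma_{(\eta_a(\Phi_a^{-1}(x)))^{-1}}\circ\Xi_{(B_a(\Phi_a^{-1}(x)))^{-1}}\bigr)(z)\right]\right\},
\end{equation*}
so the task reduces to computing $\partial_a$ of the outer $\Phi_a$, of the inner $\Phi_a^{-1}$, and of the composed gauge/time action acting on $z$, all at $a=0$.

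First I would treat the three contributions separately. Differentiating the outer $\Phi_a$ at $a=0$ gives, by \refeqn{equation_infinitesimal_SDE1}, the term $Y^i(\Psi(x,z))$ in the $i$-th component. Differentiating the inner argument $\Phi_a^{-1}(x)$ gives $-Y^j(x)$ contracted with $\partial_{x^j}\Psi^i(x,z)$, since $\partial_a(\Phi_a^{-1}(x))|_{a=0}=-Y(x)$. For the action on the $z$-slot, I would use that $\partial_a\eta_a|_{a=0}=\tau$ and $\partial_a B_a|_{a=0}=C$ at the point $\Phi_a^{-1}(x)$ which equals $x$ at $a=0$; since $\Xi_{1_{\mathcal G}}=\mathrm{id}_N$ and $\Gamma_1=\mathrm{id}_N$, differentiating $(\Gamma_{(\eta_a)^{-1}}\circ\Xi_{(B_a)^{-1}})(z)$ at $a=0$ produces exactly $-\tau(x)H^{\alpha}(z)\partial_{z^{\alpha}}-C^{\ell}(x)K^{\alpha}_{\ell}(z)\partial_{z^{\alpha}}$ acting on $\Psi^i$, because $H$ and $K_1,\dots,K_r$ are by definition the infinitesimal generators of $\Gamma_r$ and of $\Xi_g$ (expanded in the basis of $\mathfrak g$), and the reciprocal $(\eta_a)^{-1}$, $(B_a)^{-1}$ flips the sign. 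Summing the three contributions and imposing \refeqn{equation_determining1} yields precisely \refeqn{equation_determining2}, proving the sufficiency part; the commutation hypothesis \refeqn{ST1} guarantees that the order in which $\Gamma$ and $\Xi$ are composed is irrelevant to this first-order computation.

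For the necessity part under the assumption that $Z$ has jumps of any size, I would invoke Proposition \ref{proposition_determining}: its last sentence states that \refeqn{equation_determining1} is necessary whenever the hypotheses of Theorem \ref{theorem_full} hold, and the jumps-of-any-size condition (together with the standing assumption that $Z$ belongs to the class $\mathcal C$, so that the first jump time is a.s.\ strictly positive, as required in Lemma \ref{lemma_full}) is exactly what puts us in that regime. Hence \refeqn{equation_determining1} is equivalent to $E_{T_a}(\Psi)=\Psi$ for all $a$, which by the chain-rule computation above is equivalent to \refeqn{equation_determining2} holding identically in $(x,z)$.

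The main obstacle I anticipate is the careful bookkeeping in the middle step: one must justify that differentiating the nested composition $\Gamma_{(\eta_a(\Phi_a^{-1}(x)))^{-1}}\circ\Xi_{(B_a(\Phi_a^{-1}(x)))^{-1}}$ at $a=0$ genuinely contributes only the two first-order vector-field terms and no cross terms or second-order remainders — this is where the hypotheses $\Xi_g(1_N)=1_N$, $\Gamma_r(1_N)=1_N$, the vanishing of the mixed second derivatives of $\Xi_g\circ\Phi_a\circ\Phi_b'$ and $\Gamma_r\circ\Phi_a\circ\Phi_b'$ at the identity, and the smooth dependence of $B_a,\eta_a$ on their arguments all enter, and where one should be explicit that the evaluation point $\Phi_a^{-1}(x)\to x$ contributes nothing extra at first order because $B_0\equiv 1_{\mathcal G}$ and $\eta_0\equiv 1$ are constant. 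Once this is handled, the remainder is a routine application of the chain rule and of Proposition \ref{proposition_determining}.
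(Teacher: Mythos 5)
Your proposal is correct and follows essentially the same route as the paper: \refeqn{equation_determining2} is obtained as the coordinate expression of the condition $\partial_a(E_{T_a}(\Psi))|_{a=0}=0$ of Proposition \ref{proposition_determining}, whose sufficiency part gives the first claim and whose necessity part (via Theorem \ref{theorem_full}, under jumps of any size and a.s.\ strictly positive first jump time) gives the second. Two cosmetic points: the first-order computation needs no appeal to the commutation hypothesis \refeqn{ST1}, since both the gauge and time families reduce to the identity at $a=0$ so their contributions simply add, and the strict positivity of the first jump time is an extra hypothesis of Lemma \ref{lemma_full} rather than a consequence of membership in the class $\mathcal{C}$.
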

\begin{proof}
The proof of the necessary part of the theorem is obtained by writing the condition of Proposition \ref{proposition_determining} in coordinates. The sufficient part is a consequence of Theorem \ref{theorem_full}.\hfill\end{proof}

\medskip

In the literature of symmetries of deterministic differential equations,  equations
\refeqn{equation_determining2} are usually called \emph{determining equations}(see, e.g., \cite{Olver1993,Stephani1989}). It is important to
note some differences with respect to the determining equations of ODEs or also of Brownian-motion-driven SDEs (see \cite{DMU1}). Indeed, in the
deterministic case and in the Brownian  motion case the determining equations are linear and local overdetermined first order differential
equations both in the infinitesimal transformation coefficients and in the equation coefficients. Instead equations
\refeqn{equation_determining2} are \emph{linear non-local} differential equations in the coefficients $Y^i,\tau,C^{\ell}$ of the infinitesimal
transformation $V$, and they are \emph{non-linear local} differential equations in the coefficient
$\Psi^i$ of the SDE. \\

\subsection{Reduction and reconstruction through infinitesimal symmetries}

In this section we propose a theorem of reduction and reconstruction for symmetric SDEs. First of all we introduce the notion of \emph{triangular SDEs.}

\begin{definition}
We say that a geometrical SDE $\Psi$ is triangular with respect to the first $r$ coordinates $x^1,...,x^r$ if $\Psi$  is of the form
\begin{eqnarray*}
\Psi^i(x,z)=x^i+B^i(x^{i+1},...,x^m,z) \ \ \text{for }i=1,...,r\\
\Psi^i(x,z)=B^i(x^{r+1},...,x^m,z) \ \ \text{for }i=r+1,...,m,
\end{eqnarray*}
for some maps $B^i$.
\end{definition}

The name \virgolette{triangular SDE} follows from the fact that if we write explicitly the differential relations satisfied by the processes $X^i_t$ they are triangular with respect to the process $X^1_t,...,X^r_t$. \\
In particular this implies that the processes $X^1,...,X^r$ \emph{can be reconstruct} from the reduced processes $X^{r+1},...,X^{m}$ using only iterated It\^o integrals. In the following we prove that if a geometrical SDE $\Psi$ admits a non-degenerate $r$ dimensional solvable Lie algebra of infinitesimal symmetries there exits a (generally local) stochastic transformation $T=(\Phi,B,\eta)$ such that $E_T(\Psi)$ is in triangular form. In other words we can reduce the $m$ dimensional  solution $X$ to the equation $\Psi$ to a lower dimensional process $X'^{r+1},...,X'^{m}$ satisfying an $m-r$ dimensional equation and we can reconstruct the process $X$ by using only iterated It\^o integrals and by inverting the stochastic transformation $T$. A consequence of this theorem is that, if $r=m$, the process $X$ can be reconstructed from a deterministic process, in other words it can be reconstructed by \emph{quadratures}. These theorem can be seen as a generalization of the analogous results for symmetric deterministic ODEs (see \cite{Olver1993,Stephani1989}). \\
In order to establish these results we need some preliminary theorems and definition.

\begin{definition}\label{definition_regular_vectors}
A set of vector fields  $Y_1,...,Y_k$ on $M$ is  \emph{regular} on $M$ if, for any $x \in M$, the vectors $Y_1(x),...,Y_k(x)$ are linearly independent.
\end{definition}

\begin{definition}\label{definition_solvable_coordinate}
Let $Y_1,...,Y_r$ be a set of regular vector fields on $M$ which are generators of a solvable Lie algebra $\mathfrak{h}$. We say that $Y_1,...,Y_r$ are in \emph{canonical  form} if there are $i_1,...,i_l$ such that $i_1+...+i_l=r$ and, for any $x \in M$
$$(Y_1|...|Y_r)=\left(\begin{array}{c|c|c|c}
I_{i_1} & G^1_1(x) & ... & G^1_l(x) \\
\hline
0 & I_{i_2} & ... & G^2_l(x)\\
\hline
\vdots & \ddots & \ddots & \vdots \\
0 & 0 & ... & I_{i_l}\\
\hline
 0 & 0 & 0 & 0 \end{array} \right), $$
where  $G^h_k:M \rightarrow \matr(i_h,i_k)$ are smooth functions.
\end{definition}

\begin{theorem}\label{theorem_solvable_coordinate}
Let $\mathfrak{h}$ be an $r$-dimensional solvable Lie algebra on $M$ such that $\mathfrak{h}$ has constant dimension $r$ as a  distribution of $TM$ then, for any  $x_0 \in M$, there is a set of generators $Y_1,...,Y_r$  of $\mathfrak{h}$ and a local diffeomorphism $\Phi:U(x_0) \rightarrow \tilde{M}$ such that $\Phi_*(Y_1),...,\Phi_*(Y_r)$ are generators in canonical form for $\Phi_*(\mathfrak{h})$.
\end{theorem}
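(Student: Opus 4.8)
The plan is to argue by induction on the derived length of the solvable Lie algebra $\mathfrak{h}$, or equivalently on the length $l$ of a chain of subalgebras. First I would recall that, since $\mathfrak{h}$ is solvable and finite dimensional, it admits a chain of subalgebras $\mathfrak{h}_1\subset\mathfrak{h}_2\subset\cdots\subset\mathfrak{h}_l=\mathfrak{h}$ with $\mathfrak{h}_k$ an ideal in $\mathfrak{h}_{k+1}$ and $\dim\mathfrak{h}_k=i_1+\cdots+i_k$; more precisely, one may build such a chain so that each quotient $\mathfrak{h}_{k+1}/\mathfrak{h}_k$ is abelian and $\mathfrak{h}_1$ itself is abelian (this is the standard structure theory of solvable Lie algebras, and it is what dictates the block sizes $i_1,\dots,i_l$ in Definition \ref{definition_solvable_coordinate}). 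Because $\mathfrak{h}$ has constant rank $r$ as a distribution of $TM$, each $\mathfrak{h}_k$ is an involutive distribution of constant rank, so the Frobenius theorem applies at every stage.

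Next I would set up the base case. For the abelian ideal $\mathfrak{h}_1$ spanned (as a distribution) by commuting, pointwise independent vector fields, the simultaneous flow-box (rectification) theorem gives local coordinates in which a chosen commuting basis becomes $\partial_{y^1},\dots,\partial_{y^{i_1}}$, i.e. the first block of the matrix in Definition \ref{definition_solvable_coordinate} is $I_{i_1}$ with zeros below. The inductive step is the crux: assuming the generators of $\mathfrak{h}_k$ have been straightened into the required block-triangular canonical form in some chart, I would extend a complementary set of $i_{k+1}$ vector fields spanning $\mathfrak{h}_{k+1}$ modulo $\mathfrak{h}_k$. Since $\mathfrak{h}_k$ is an ideal in $\mathfrak{h}_{k+1}$, the brackets $[Y_a,Y_b]$ for $Y_a\in\mathfrak{h}_{k+1}$, $Y_b\in\mathfrak{h}_k$ lie in $\mathfrak{h}_k$; this is exactly the algebraic fact that lets one add the new block as an $I_{i_{k+1}}$ identity block (choosing the new generators so that their leading components are the standard unit vectors on the fresh coordinate directions) while allowing the off-diagonal blocks $G^h_k$ to be arbitrary smooth functions of the already-introduced coordinates. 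Concretely, one applies Frobenius to the rank-$(\dim\mathfrak{h}_{k+1})$ distribution to obtain adapted coordinates, then uses the ideal property to rechoose the basis of the new block so that it is rectified along the new coordinate axes, leaving the interaction with the older blocks encoded in the upper-triangular $G$-terms.

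The main obstacle I anticipate is bookkeeping: one must verify that straightening the $(k{+}1)$-st block can be done \emph{without destroying} the canonical form already achieved for the first $k$ blocks, i.e. that the successive diffeomorphisms can be chosen to preserve the earlier normalizations. This is handled by observing that at each stage we only need to rectify the new directions transverse to the already-integrated foliation, so the coordinate change fixes the first $i_1+\cdots+i_k$ coordinates (or acts on them only through the permitted $G$-terms); the ideal structure guarantees that the new vector fields, reduced modulo the lower blocks, span an integrable complement, so Frobenius produces the desired extra coordinates. Composing all these local diffeomorphisms on a sufficiently small neighborhood $U(x_0)$ yields the single diffeomorphism $\Phi$, and $\Phi_*(Y_1),\dots,\Phi_*(Y_r)$ are then generators of $\Phi_*(\mathfrak{h})$ in the canonical form of Definition \ref{definition_solvable_coordinate}. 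Finally I would note that the construction is explicitly local — it lives on $U(x_0)$ — which is all that is claimed, and that the hypothesis of constant distributional dimension $r$ is used precisely to keep Frobenius applicable at each step and to guarantee that the chosen generators remain pointwise independent on $U(x_0)$.
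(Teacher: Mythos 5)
The paper itself gives no proof of Theorem \ref{theorem_solvable_coordinate}: it is quoted from \cite{DMU2} (Theorem 2.6), so there is no in-text argument to compare with. Your overall strategy is the standard one and presumably the one of the cited reference: induct along a flag $\mathfrak{h}_1\subset\cdots\subset\mathfrak{h}_l=\mathfrak{h}$ with each term an ideal in the next and abelian quotients (e.g. the reversed derived series), use the constant-rank hypothesis to make every $\mathfrak{h}_k$ an involutive constant-rank distribution, use the ideal property to make elements of $\mathfrak{h}_{k+1}$ projectable along the $\mathfrak{h}_k$-foliation, and check that the successive straightenings do not destroy the earlier normalization. The base case (simultaneous flow-box for the commuting, pointwise independent fields of $\mathfrak{h}_1$) is fine.

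There is, however, one step which, as you state it, is too weak to produce the diagonal block $I_{i_{k+1}}$: applying Frobenius to the rank-$(i_1+\cdots+i_{k+1})$ distribution only rectifies the \emph{distribution}, not the individual generators, and the subsequent \virgolette{re-choice of the basis of the new block} is a constant-coefficient operation (the $Y_i$ must be elements of the Lie algebra $\mathfrak{h}$, not a $\cinf$-module basis), so it cannot remove the non-constant components that the new generators will in general have in a Frobenius chart. What is actually needed is the simultaneous flow-box theorem for \emph{commuting} vector fields applied on the local quotient $Q$ of $U$ by the $\mathfrak{h}_k$-foliation: by the ideal property the new generators project to well-defined vector fields on $Q$; because the quotient $\mathfrak{h}_{k+1}/\mathfrak{h}_k$ is abelian (this is where that hypothesis is genuinely used, and your inductive step never invokes it) these projections commute; and by pointwise independence of $Y_1,\dots,Y_{i_1+\cdots+i_{k+1}}$ they are pointwise independent on $Q$, hence simultaneously rectifiable there. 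Since in the inductive chart $\mathfrak{h}_k$ spans $\operatorname{span}(\partial_{x^1},\dots,\partial_{x^{d_k}})$ with $d_k=i_1+\cdots+i_k$, the leaf-constant functions giving the new transverse coordinates depend only on $x^{d_k+1},\dots,x^m$; keeping $x^1,\dots,x^{d_k}$ and replacing only the transverse coordinates leaves $\partial_{x^1},\dots,\partial_{x^{d_k}}$ (and hence the already-normalized generators, whose components lie only along these directions) unchanged, while the new generators become $\partial_{x^{d_k+j}}+\sum_{i\le d_k}G^i\,\partial_{x^i}$. With this replacement your induction closes; as written, \virgolette{Frobenius plus re-choice of basis} is the one genuine gap.
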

\begin{proof}
The proof can be found in \cite{DMU2}  Theorem 2.6.
${}\hfill$ \end{proof}

\begin{remark} In the particular case of a solvable connected Lie group $\mathcal{H}$ acting  freely and regularly on $M$, Theorem \ref{theorem_solvable_coordinate} admits a (semi)-global version, i.e. we can find a coordinate system in a neighborhood of a set of orbits of the action of the group $\mathcal{H}$. Indeed, if $\mathcal{H}$ acts freely and regularly, $M$ is semi-globally diffeomorphic to $V \times \mathcal{H}$ (where $V \subset \mathbb{R}^{m-r}$) and the generators $Y_1,...,Y_r$ of $\mathfrak{h}$ are vertical vector fields with respect to the bundle structure of $M$. Furthermore, it is possible to choose  a global coordinate system  $g^1,...,g^r$ on $\mathcal{H}$ such that $Y_1,...,Y_r$ are in canonical form (see for example \cite{Onishchik}, Chapter 2 Section 3.1 Corollary 1). In this way we construct a coordinate system in any neighborhood of a set of orbits of $G$.
\end{remark}

\begin{theorem}\label{theorem_infinitesimal_SDE1}
Let $K=\spann \{V_1,...,V_r\}$ be a Lie algebra of $\mathcal{V}(M)$ and suppose that $Y_1,...,Y_r$ is a set of regular vector fields (where $V_i=(Y_i,C_i,\tau_i)$) and that $\mathcal{G}$ is a finite dimensional Lie algebra. Then, for any $x_0 \in M$ there exist an open neighborhood $U$ of $x_0$ and a stochastic transformation $T \in
S(U)$ of the form $T=(Id_U,B,\eta)$ such that $T_*(V_1),...,T_*(V_r)$ are strong infinitesimal stochastic transformations in
$\mathcal{V}(U)$. Furthermore the smooth functions $B,\eta$ are solutions to the equations
\begin{eqnarray}
Y_i(B)&=&-L_{B*}(C_i) \label{equation_reduction1}\\
Y_i(\eta)&=&-\tau_i \eta, \label{equation_reduction2}
\end{eqnarray}
 where $L_g$ is the diffeomorphism given by the left multiplication for $g \in \mathcal{G}$ and  $i=1,...,r$.
\end{theorem}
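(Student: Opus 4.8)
The plan is to reduce the assertion to a standard Frobenius-type integrability argument for the prescribed differential equations \refeqn{equation_reduction1}--\refeqn{equation_reduction2}, and then to verify that their solutions indeed trivialize the gauge and time components of the $V_i$ via the push-forward formula \refeqn{equation_infinitesimal_SDE4}. First I would bring the vector fields $Y_1,\dots,Y_r$ into a convenient local form: since they are regular, by the flow-box theorem (or the canonical-form result Theorem \ref{theorem_solvable_coordinate} applied coordinate-wise) we may assume, shrinking to a neighborhood $U$ of $x_0$, that $Y_i = \partial_{x^i}$ for $i=1,\dots,r$, so the system \refeqn{equation_reduction1}--\refeqn{equation_reduction2} becomes $\partial_{x^i}(B) = -L_{B*}(C_i)$ and $\partial_{x^i}(\eta) = -\tau_i\eta$. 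Recall that by Definition \ref{def_infstochtransf} and the bracket formula \refeqn{equation_infinitesimal_SDE5}, the hypothesis that $K$ is a Lie algebra forces the closure relations among the $(Y_i,C_i,\tau_i)$; with $Y_i=\partial_{x^i}$ commuting, these read $\partial_{x^i}(C_j)-\partial_{x^j}(C_i)-\{C_i,C_j\}=0$ and $\partial_{x^i}(\tau_j)-\partial_{x^j}(\tau_i)=0$ (together with the purely $Y$-part $[Y_i,Y_j]$ lying in the span, which is automatic here).

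The second step is the integration. For $\eta$: the one-form $\sum_i \tau_i\, dx^i$ (in the first $r$ variables) is closed by $\partial_{x^i}\tau_j=\partial_{x^j}\tau_i$, hence locally exact, $\tau_i = -\partial_{x^i}(\log\eta)$ for a positive smooth $\eta$ defined on a possibly smaller neighborhood; freedom in the remaining variables $x^{r+1},\dots,x^m$ is used to normalize. For $B$: the equations $\partial_{x^i}(B) = -L_{B*}(C_i)$ are exactly the condition that $B:U\to\mathcal G$ be an integral of a $\mathfrak g$-valued flat connection $\theta = -\sum_i C_i\, dx^i$ on the (first $r$) coordinate directions; the Maurer--Cartan / flatness condition $d\theta + \tfrac12[\theta,\theta]=0$ is precisely $\partial_{x^i}(C_j)-\partial_{x^j}(C_i)-\{C_i,C_j\}=0$, which holds because $K$ is a Lie algebra and $\mathcal G$ is finite dimensional (so the standard Lie-group integration of flat $\mathfrak g$-connections applies). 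This yields a smooth $B:U\to\mathcal G$; iterating the ODE solver in $x^1$, then $x^2$, etc., with the compatibility just verified, gives a well-defined germ at $x_0$, and one may fix $B(x_0)=1_{\mathcal G}$.

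The third step is to check that $T=(Id_U,B,\eta)$ does the job, i.e. that $T_*(V_i)=(Y_i,0,0)$. Plugging $\Phi=Id_U$ into \refeqn{equation_infinitesimal_SDE4}, the gauge component of $T_*(V_i)$ is $Ad_B(C_i) + R_{B^{-1}*}(Y_i(B))$ and the time component is $\tau_i + Y_i(\eta)\eta^{-1}$. The latter vanishes by \refeqn{equation_reduction2}. For the former, the relation $Y_i(B) = -L_{B*}(C_i)$ together with the identity $R_{B^{-1}*}\circ L_{B*} = Ad_B$ gives $R_{B^{-1}*}(Y_i(B)) = -Ad_B(C_i)$, so the gauge component is $Ad_B(C_i)-Ad_B(C_i)=0$; here I should be slightly careful about left-versus-right conventions for the adjoint action and for $L_{g}$ versus $R_g$, matching the sign/side chosen in \refeqn{equation_infinitesimal_SDE4} and in the statement (the theorem writes $L_{B*}(C_i)$), but this is exactly why the equations are posed with $L_{B*}$ rather than $R_{B*}$. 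Finally, the $Y$-component of $T_*(V_i)$ is $\Phi_*(Y_i)=Y_i$ since $\Phi=Id_U$, so $T_*(V_i)=(Y_i,0,0)$ is a strong infinitesimal stochastic transformation in $\mathcal V(U)$, as claimed.

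The main obstacle is the integrability step for $B$: one must confirm that the Lie-algebra hypothesis on $K$ supplies \emph{exactly} the flatness condition needed to integrate the $\mathfrak g$-valued connection $-\sum_i C_i dx^i$, and that the successive-ODE construction produces a genuinely smooth $B$ on a common neighborhood (not merely along coordinate axes). The finite-dimensionality of $\mathcal G$ is used precisely here, to invoke the classical theorem that a flat $\mathfrak g$-connection integrates locally to a $\mathcal G$-valued map; everything else is bookkeeping with the push-forward formula \refeqn{equation_infinitesimal_SDE4} and the closedness of the scalar one-form $\sum_i\tau_i dx^i$.
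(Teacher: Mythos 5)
Your overall strategy---read \refeqn{equation_reduction1}--\refeqn{equation_reduction2} as parallel-transport equations for a flat $(\mathfrak{g}\times\mathbb{R})$-valued connection, integrate, and then cancel the gauge and time components through the push-forward formula \refeqn{equation_infinitesimal_SDE4}---is the right one, and your final verification that $T_*(V_i)=(Y_i,0,0)$ (using $R_{B^{-1}*}\circ L_{B*}=Ad_B$ and \refeqn{equation_reduction2}) is correct. The genuine gap is in your first step: you cannot, in general, assume $Y_i=\partial_{x^i}$ for $i=1,\dots,r$. Simultaneous straightening of a regular family of vector fields into coordinate fields is possible only if they pairwise commute, whereas the only hypothesis here is that $K=\spann\{V_1,\dots,V_r\}$ is a Lie algebra, so $[Y_i,Y_j]=\lambda^k_{ij}Y_k$ with possibly nonzero structure constants (for instance $Y_1=\partial_{x^1}$, $Y_2=x^1\partial_{x^1}+\partial_{x^2}$ is regular and non-abelian). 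Invoking Theorem \ref{theorem_solvable_coordinate} does not repair this: that theorem assumes solvability (not assumed in the present statement) and produces only the triangular canonical form of Definition \ref{definition_solvable_coordinate}, not $Y_i=\partial_{x^i}$. Consequently the closure relations you use, $\partial_{x^i}(C_j)-\partial_{x^j}(C_i)-\{C_i,C_j\}=0$ and $\partial_{x^i}(\tau_j)-\partial_{x^j}(\tau_i)=0$, are only the abelian special case of the correct ones, namely $Y_i(C_j)-Y_j(C_i)-\{C_i,C_j\}=\lambda^k_{ij}C_k$ and $Y_i(\tau_j)-Y_j(\tau_i)=\lambda^k_{ij}\tau_k$ coming from \refeqn{equation_infinitesimal_SDE5}; with these nonzero right-hand sides the form $-\sum_i C_i\,dx^i$ is in general not Maurer--Cartan flat in your coordinates, so the integration step does not go through as written.

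The way around this (which is the paper's route) is to avoid straightening altogether: the distribution spanned by $Y_1,\dots,Y_r$ is involutive, so locally $U\simeq M'\times H$ with the $Y_i$ vertical, and on each leaf $H_{x'}$ one defines a $(\mathfrak{g}\times\mathbb{R})$-valued connection by assigning to a vector $R=r^k(y)Y_k$ the value $(r^k C_k,\,r^k\tau_k)$, i.e.\ one works in the moving frame $\{Y_i\}$ rather than in coordinate directions. Flatness of this connection is precisely the full non-abelian closure relation $[V_i,V_j]=\lambda^k_{ij}V_k$, and $(B,\eta)$ is then obtained by parallel transport of $(1_{\mathcal{G}},1)$ along the (simply connected) leaves starting from a smooth section $S:M'\rightarrow U$; this also delivers smoothness in the transverse variables, the point you yourself flagged as the main obstacle and which your axis-by-axis ODE iteration only settles in the abelian situation. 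Finite dimensionality of $\mathcal{G}$ enters exactly where you said, in integrating the flat $\mathfrak{g}$-valued data to a $\mathcal{G}$-valued map.
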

\begin{proof}
Since $Y_1,...,Y_r$ are regular vector fields for any $x_0 \in M$ there exists a neighborhood $U$ of $x_0$ which is foliated by the orbits of $Y_1,...,Y_r$, i.e. there exists a $M' \subset \mathbb{R}^{m-r}$ and $H \subset \mathbb{R}^{r}$ such that $U=M' \times H$ and such that $\langle Y_1,...,Y_r \rangle \in TH$, i.e. $Y_1,...,Y_r$ are vertical vector field for the trivial bundle $M' \times H$. \\
Furthermore there exists a section of the trivial bundle $M' \times H$, i.e. a smooth map $S:M' \rightarrow U$ such that $S(x')=(x',S^H(x'))$. Let $x'_0 \in M'$ be a fixed point and denote by $H_{x_0'}$ the fiber of $x_0'$ in the trivial bundle $M' \times H$. Consider now the trivial bundle $H_{x_0'} \times (\mathcal{G} \times \mathbb{R}_+)$. We can define a linear map $K:TH_{x_0} \rightarrow \mathfrak{g} \times \mathbb{R}$ i.e. a connection on the trivial bundle $H_{x_0'} \times (\mathcal{G} \times \mathbb{R}_+)$ using the infinitesimal stochastic transformations $V_1,...,V_r$. Indeed if $R \in TH_{x_0'}$, since $Y_1,...,Y_r$ are regular vector fields, $R$ can be written in a unique way as $R=r^{k}(y) Y_k$ where $r^k(y)$ are smooth functions on $H_{x_0'}$. Then we define $K(R)=(r^k(y)C_k,r^k(y)\tau_k)$. We prove that the connection $K$ is flat. In order to prove this it is sufficient to verify that
$$[(Y_i,K(Y_i)),(Y_j,K(Y_j))]=([Y_i,Y_j],K([Y_i,Y_j])),$$
where the commutation on the left hand side of the previous equation is the commutation of the vector fields $(Y_k,K(Y_k)) \in T(H_{x_0'} \times \mathcal{G} \times \mathbb{R}_+)$. If $[Y_i,Y_j]=\lambda_{ij}^kY_k$, by the rule of commutation of infinitesimal stochastic transformations we have
\begin{eqnarray*}
[(Y_i,K(Y_i)),(Y_j,K(Y_j))]&=&([Y_i,Y_j],Y_i(K(Y_j))-Y_j(K(Y_i))-\{K(Y_i),K(Y_j)\})\\
&=&(\lambda^k_{ij}Y_k,(Y_i(C_j)-Y_j(C_i)-\{C_i,C_j\},Y_i(\tau_j)-Y_j(\tau_i)))\\
&=&(\lambda^k_{ij}Y_k,(\lambda^k_{ij}C_k,\lambda^k_{ij}\tau_k))\\
&=&([Y_i,Y_j],K([Y_i,Y_j])).
\end{eqnarray*}
Since $K$ is a flat connection, and since without lost of generality we can suppose that $H$ is simply connected, there exists a unique function $(B(x_0',y),\eta(x_0',y))$ defined on $H_{x_0'}$ and solving the equations \eqref{equation_reduction1} and \eqref{equation_reduction2} on $H_{x_0'}$. Indeed $(B(x_0',y),\eta(x_0',y))$ can be obtained as the parallel transport of the identity from the point $S(x_0')$ into the point $y$ through the flat connection $K$. The flatness of the connection $K$ ensures that $B(x_0',y)$ and $\eta(x_0',y)$ solve equations \eqref{equation_reduction1} and \eqref{equation_reduction2}. By repeating this construction for any $x' \in M'$, since the section $S$ is smooth, we obtain two smooth functions $B(x',y)$ and $\eta(x',y)$ on $U=M'\times H$ solving equations \eqref{equation_reduction1} and \eqref{equation_reduction2}.
${}\hfill$\end{proof}\\

\begin{remark}
It is possible to partially extend the previous theorem to the case in which $\mathcal{G}$ is an infinite dimensional subgroup of the group of diffeomorphisms of $N$. More precisely if $\mathcal{G}_{loc}$ is the subgroupoid of the groupoid of local diffeomorphisms of $N$ such that the Lie algebra of $\mathcal{G}_{loc}$ is generated by the Lie algebra of $\mathcal{G}$, we can prove the existence of $B \in \mathcal{G}_{loc}$ satisfying the thesis of Theorem \ref{theorem_infinitesimal_SDE1}. However sufficient conditions on $V_1,...,V_r$ assuring that $B$ is not only in $\mathcal{G}_{loc}$ but also in $\mathcal{G}$ are not yet worked out.
\end{remark}

\begin{lemma}\label{lemma_reduction}
If $Y_1,...,Y_r$ are a set of vector fields in canonical form which are also strong symmetries of the geometrical SDE $\Psi$, then $\Psi$ is in triangular form with respect to $x^1,...,x^r$.
\end{lemma}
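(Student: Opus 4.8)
The plan is to read the triangular structure directly off the determining equations \refeqn{equation_determining2}, one diagonal block of the canonical form at a time. Since $Y_1,\dots,Y_r$ are \emph{strong} symmetries, the associated infinitesimal transformations are of the form $V_a=(Y_a,0,0)$, so for each $a$ equation \refeqn{equation_determining2} collapses to the infinitesimal invariance condition
\begin{equation*}
Y_a^{\,i}\bigl(\Psi(x,z)\bigr)=Y_a^{\,j}(x)\,\partial_{x^j}\bigl(\Psi^i\bigr)(x,z),\qquad i=1,\dots,m,\ z\in N,
\end{equation*}
which merely says that for every $z$ the map $\Psi(\cdot,z)$ commutes with the flow of $Y_a$ on $M$. (That a strong symmetry actually satisfies these equations is the necessity part of Proposition \ref{proposition_determining}, written in coordinates; for strong transformations one can also see it directly, since $P_{T_a}$ leaves the driving process $Z$ untouched and $E_{T_a}(\Psi)$ is exactly the SDE furnished by Theorem \ref{theorem_geometrical1}.)

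Next I would set up the block bookkeeping. Write $m_0=0<m_1<\dots<m_l=r$ with $m_k=i_1+\dots+i_k$, so that the ``$k$-th block'' consists of the generators $Y_{m_{k-1}+1},\dots,Y_{m_k}$. By Definition \ref{definition_solvable_coordinate}, if $a$ lies in the $k$-th block then the component $Y_a^{\,p}$ equals $\delta^p_a$ for every $p>m_{k-1}$, and is an (a priori arbitrary) smooth function of $x$ only for $p\le m_{k-1}$. I would then prove by induction on $k=0,1,\dots,l$ the statement $H_k$: \emph{for all $j\le m_k$ and all $i\ge j$ one has $\partial_{x^j}\Psi^i(x,z)=\delta^i_j$}; the base case $H_0$ is vacuous.

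For the inductive step, assume $H_{k-1}$ and fix a generator $Y_a$ of the $k$-th block, so $m_{k-1}<a\le m_k$. In its invariance equation $\sum_p Y_a^{\,p}(x)\,\partial_{x^p}\Psi^i(x,z)=Y_a^{\,i}(\Psi(x,z))$, take any output index $i>m_{k-1}$: every summand with $p\le m_{k-1}$ satisfies $i>p$, hence $\partial_{x^p}\Psi^i=0$ by $H_{k-1}$, so only the diagonal term $\partial_{x^a}\Psi^i$ survives on the left; and since $i>m_{k-1}$ the right-hand side equals $Y_a^{\,i}(\Psi(x,z))=\delta^a_i$. Thus $\partial_{x^a}\Psi^i(x,z)=\delta^a_i$ for every $i>m_{k-1}$ and every $a$ in the $k$-th block, and since any such $a$ has $a>m_{k-1}$ the output indices $i\ge a$ relevant to $H_k$ all satisfy $i>m_{k-1}$; combined with $H_{k-1}$ this gives $H_k$. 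Output indices $i\le m_{k-1}$ produce additional relations, but only for the derivatives $\partial_{x^a}\Psi^i$ with $i<a$, which the triangular form does not restrict, so these are harmless.

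Having reached $H_l$ — i.e. $\partial_{x^j}\Psi^j=1$ and $\partial_{x^j}\Psi^i=0$ whenever $j<i\le r$ or $j\le r<i$ — I would conclude by integrating these relations on the (connected) chart: for $i\le r$ the function $\Psi^i-x^i$ is independent of $x^1,\dots,x^i$, hence of the form $B^i(x^{i+1},\dots,x^m,z)$; for $i>r$ the function $\Psi^i$ is independent of $x^1,\dots,x^r$, hence of the form $B^i(x^{r+1},\dots,x^m,z)$. These are precisely the two lines of the triangular form. The one place demanding care is the bookkeeping in the inductive step: one must verify that at stage $k$ the hypothesis $H_{k-1}$ is exactly strong enough to annihilate the off-diagonal $G$-terms of $Y_a$ for the output indices $i>m_{k-1}$ that feed $H_k$, while imposing nothing on the components $\Psi^i$ with $i\le m_{k-1}$, which are exactly the ones allowed to depend on the block-$k$ coordinates in the final answer; the rest is routine.
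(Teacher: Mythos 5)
Your proof is correct and takes essentially the same route as the paper's: both extract from the determining equations \refeqn{equation_determining2}, applied to the strong symmetries in canonical form, the relations $\partial_{x^j}(\Psi^i)=\delta^i_j$ for $j\le r$, $i\ge j$, by an induction along the coordinates (coordinate-by-coordinate in the paper, block-by-block in your version), and then integrate to obtain the triangular form. The additional bookkeeping you supply (annihilating the $G$-block terms via the inductive hypothesis and noting that the equations with output index $i<a$ are unrestrictive) merely makes explicit what the paper's brief induction leaves implicit.
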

\begin{proof}
Since $Y_1,...,Y_r$ are in canonical form, we have that $Y_i=\sum_{j<i}Y^i_j(x)\partial_{x^j}+\partial_{x^i}$. Using the determining equation for $Y_1$ we obtain $1-\partial_{x^1}(\Psi^1)=0$ and $\partial_{x^1}(\Psi^i)=0$ for $i>1$. This means that $\Psi^1=x^1+B^1(x^2,...,x^m,z)$ and $\Psi^i$ do not depend on $x^1$ for $i>1$. Using an analogous reasoning and a proof by induction we can prove that $\partial_{x^i}(\Psi^i)=1$ and $\partial_{x^i}(\Psi^k)=0$ for $k>j$. This implies the thesis of the lemma.
${}\hfill$\end{proof}\\

\begin{theorem}
Let $V_1=(Y_1,C_1,\eta_1),...,V_r=(Y_r,C_r,\eta_r) \in \mathcal{V}(M)$ be a solvable Lie algebra of symmetries of the SDE $\Psi$ such that $Y_1,...,Y_r$ are regular vector fields and $\mathcal{G}$ is a finite dimensional Lie group. Then, for any $x_0 \in M$, there exist a neighborhood $U$ of $x_0$ and a stochastic transformation $T \in S(U)$ such that $E_T(\Psi)$ is in triangular form.
\end{theorem}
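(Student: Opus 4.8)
The plan is to assemble three results already established: Theorem~\ref{theorem_infinitesimal_SDE1} to strip off the gauge and time components of the given symmetries, Theorem~\ref{theorem_solvable_coordinate} to straighten the surviving vector fields into canonical form, and Lemma~\ref{lemma_reduction} to read off triangularity. Before starting I would record two facts used throughout. First, by the bracket formula \refeqn{equation_infinitesimal_SDE5} the projection $(Y,C,\tau)\mapsto Y$ is a Lie algebra homomorphism, so $\mathfrak h:=\spann\{Y_1,\dots,Y_r\}$ is a solvable Lie algebra of vector fields, of constant rank $r$ because the $Y_i$ are regular. Second --- and this is the point needing the most care --- pushing a symmetry forward by a stochastic transformation yields a symmetry of the transformed SDE: this I would deduce from Theorem~\ref{theorem_symmetry3} applied to both $T$ and $T^{-1}$ (so that $P_{T^{-1}}\circ P_T=\mathrm{id}$ on class-$\mathcal C$ processes and $E_{T^{-1}}\circ E_T=\mathrm{id}$ on SDEs, using that stochastic transformations compose as principal-bundle isomorphisms of $M\times\mathcal H$), together with the standard fact that the one-parameter group of $T_*(V)$ is the $T$-conjugate of that of $V$; consequently $T_*$ carries a Lie algebra of symmetries of $\Psi'$ onto a Lie algebra of symmetries of $E_T(\Psi')$.

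Fixing $x_0\in M$, I would first apply Theorem~\ref{theorem_infinitesimal_SDE1} to $K=\spann\{V_1,\dots,V_r\}$, which is a Lie algebra of $\mathcal V(M)$ with the $Y_i$ regular and $\mathcal G$ finite dimensional: this produces a neighborhood $U_1$ of $x_0$ and a stochastic transformation $T_1=(Id_{U_1},B,\eta)\in S(U_1)$, with $B,\eta$ solving \eqref{equation_reduction1}--\eqref{equation_reduction2}, such that $T_{1*}(V_1),\dots,T_{1*}(V_r)$ are strong. Since $T_1$ has identity diffeomorphism part, formula \refeqn{equation_infinitesimal_SDE4} shows $T_{1*}(V_i)=(Y_i,0,0)$; thus the $Y_i$ are untouched, all of $T_{1*}(K)$ consists of strong infinitesimal transformations, and by the covariance recorded above these are symmetries of $\Psi_1:=E_{T_1}(\Psi)$.

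Next I would apply Theorem~\ref{theorem_solvable_coordinate} to $\mathfrak h$ at $x_0$: shrinking to a neighborhood $U_2\subseteq U_1$, it gives generators $\tilde Y_i=\sum_j a_i^j Y_j$ of $\mathfrak h$ (constant linear combinations of the $Y_i$) and a local diffeomorphism $\Phi:U_2\to\tilde M$ with $\Phi_*(\tilde Y_1),\dots,\Phi_*(\tilde Y_r)$ in canonical form. The strong transformations $(\tilde Y_i,0,0)=\sum_j a_i^j(Y_j,0,0)$ lie in $T_{1*}(K)$, hence are symmetries of $\Psi_1$; applying the strong stochastic transformation $T_2=(\Phi,1_{\mathcal G},1)$ and using \refeqn{equation_infinitesimal_SDE4} again, $T_{2*}(\tilde Y_i,0,0)=(\Phi_*(\tilde Y_i),0,0)$ remains strong with its vector-field part in canonical form, and by covariance these are strong symmetries of $\Psi_2:=E_{T_2}(\Psi_1)$. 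Finally, Lemma~\ref{lemma_reduction} applied to the canonical-form strong symmetries $\Phi_*(\tilde Y_1),\dots,\Phi_*(\tilde Y_r)$ of the geometrical SDE $\Psi_2$ yields that $\Psi_2$ is in triangular form with respect to $x^1,\dots,x^r$; and since $\Psi_2=E_{T_2}(E_{T_1}(\Psi))=E_{T_2\circ T_1}(\Psi)$ by functoriality of $E_{\cdot}$, the transformation $T:=T_2\circ T_1$, defined on $U_2$, is the one we want. The main (though not deep) obstacle is exactly the covariance claim of the first paragraph; once it is in place, the rest is bookkeeping with \refeqn{equation_infinitesimal_SDE4}, \refeqn{equation_infinitesimal_SDE5} and the cited results.
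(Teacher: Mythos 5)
Your proposal is correct and follows essentially the same route as the paper: apply Theorem \ref{theorem_infinitesimal_SDE1} to turn the $V_i$ into strong symmetries via a transformation of the form $(Id,B,\eta)$, then Theorem \ref{theorem_solvable_coordinate} to put the vector fields in canonical form via a diffeomorphism, and conclude triangularity with Lemma \ref{lemma_reduction}, composing the two transformations. The only difference is that you spell out the covariance of symmetries under $E_T$/$P_T$ and the passage to new generators of $\mathfrak h$, details the paper's proof uses implicitly.
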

\begin{proof}
We prove that there exists a $T=T_1 \circ T_2$, where $T_1=(\Phi_1,id_{\mathcal{G}}, 1)\in S(U)$ and $T_2=(id_M,B,\eta) \in S(U)$, satisfying the thesis of the theorem. Owing to  Theorem \ref{theorem_infinitesimal_SDE1}, the transformation $T_2$ can be chosen  such that $T_2^*(V_1)=(Y_1,0,0),...,T_2^*(V_r)=(Y_r,0,0)$. This means that $Y_1,...,Y_r$ form a regular solvable Lie algebra of strong symmetries of $E_{T_2}(\Psi)$. By Theorem \ref{theorem_solvable_coordinate} there exists a (locally defined) map $\Phi_1$ such that $\Phi_1^*(Y_1),...,\Phi^*_1(Y_r)$ are in canonical form and they are symmetries of $E_{T_1}(E_{T_2}(\Psi))$. By Lemma \ref{lemma_reduction}, this implies that $E_{T_1}(E_{T_2}(\Psi))$ is in triangular form. Since $E_T(\Psi)=E_{T_1}(E_{T_2}(\Psi))$ the theorem is proved.
${}\hfill$\end{proof}\\

\section{A two dimensional example}\label{subsection_example}

In  order to give an idea of the generality and of the flexibility of our approach, in this section we propose an example of application of the previous theoretical results. For the sake of simplicity we consider only semimartingale with gauge symmetries (no time symmetries) and we use the notation introduced in Remark \ref{remark_senza}.\\

Setting $M=\mathbb{R}^2$, $N=GL(2) \times \mathbb{R}^2$ (with the natural multiplication), we consider  the geometrical  SDE
\begin{equation}\label{equation_example}
\Psi(x,z_{(1)},z_{(2)})=z_{(1)} \cdot x+z_{(2)}.
\end{equation}
The SDE associated with $\Psi$ is an affine SDE and its solution $(X,Z)$ satisfies the following stochastic differential relation
\begin{equation}\label{equation_example1} dX^i_t=X^j_{t_-}(Z^{-1}_{(1)})^k_{j,t_-}{dZ^i_{k,(1),t}+dZ^i_{(2),t}},
\end{equation}
where $Z^{-1}_{(1)}$ is the inverse matrix of $Z_{(1)}$ and  $GL(2)$ is naturally embedded in the set of the two by two matrices. If we set
\begin{equation}\label{equation_example2}
\overline{Z}^i_{j,t}=\int_0^t{(Z^{-1}_{(1)})^k_{j,s_-}{dZ^i_{k,(1),s}}},
\end{equation}
equation \refeqn{equation_example1} becomes the most general equation affine both in the noises $Z_{(1)},Z_{(2)}$ and in the unknown
process $X$. Furthermore, if the noises $Z_{(1)},Z_{(2)}$ are discrete time semimartingales (i.e. semimartingales with fixed time jumps at times
$n \in \mathbb{N}$) equation \refeqn{equation_example1} becomes $X_n=Z_{(1),n-1}^{-1} \cdot Z_{(1),n} \cdot X_{n-1}+Z_{(2),n}-Z_{(2),n-1}$, that
is an affine type iterated
random map (see Subsection \ref{subsection_iterated} and references therein). \\
The SDE $\Psi$ does not have strong symmetries, in the sense that, for general semimartingales $(Z_{(1)},Z_{(2)})$, equation \eqref{equation_example1} does not admit symmetries. \\
For this reason we suppose that the semimartingales $Z_{(1)},Z_{(2)}$ have the gauge symmetry  group $O(2)$ with the natural action
\begin{equation}\label{equation_example3}
\Xi_{B}(z_{(1)},z_{(2)})=(B \cdot z_{(1)} \cdot B^T, B \cdot z_{(2)}),
\end{equation}
where $B \in O(2)$.\\
In order to use the determining equation \refeqn{equation_determining2} for calculating the infinitesimal symmetries of the SDE $\Psi$, we need
to explicitly write the infinitesimal generator $K$ of the action $\Xi_B$ on $N$. In the standard coordinate system of $N$ we have that $\Xi_B$
is generated by
\begin{eqnarray*}
K&=&(-z^2_{1,(1)}-z^1_{2,(1)})\partial_{z^1_{1,(1)}}+(z^1_{1,(1)}-z^2_{2,(1)})\partial_{z^1_{2,(1)}}+(z^1_{1,(1)}-z^2_{2,(1)})\partial_{z^2_{1,(1)}}+\\
&&+(z^1_{2,(1)}+z^2_{1,(1)})\partial_{z^2_{2,(1)}}-z^2_{(2)}\partial_{z^1_{(2)}}+z^1_{(2)}\partial_{z^2_{(2)}}. \end{eqnarray*} If we denote by
$$R=\left(\begin{array}{cc}
0 & -1 \\
1 & 0 \end{array} \right) $$ we have that

\begin{eqnarray*}
K(z_{(1)})&=&R \cdot z_{(1)}+ z_{(1)} \cdot R^T\\
K(z_{(2)})&=&R \cdot z_{(2)},
\end{eqnarray*}
where the vector field $K$ is applied componentwise to the matrix $z_{(1)}$ and the vector $z_{(2)}$. Using this property of $K$ we can easily
prove that
$$V=(Y,C)=\left(-x^2\partial_{x^1}+x^1\partial_{x^2}, 1 \right),$$
(where $C=1$ is the component of the gauge symmetry with respect to the generator $K$) is a symmetry of the equation $\Psi$. Indeed, recalling
that $Y$ is a linear vector field whose components satisfy the relation
$$Y=R \cdot x$$
we have that, in this case, the determining equations \refeqn{equation_determining2} read
\begin{eqnarray*}
Y \circ \Psi-Y(\Psi)-C(x)K(\Psi)&=& R \cdot (z_{(1)} \cdot x+z_{(2)} )-z_{(1)} \cdot (R \cdot x)- K(\Psi)\\
&=&R \cdot (z_{(1)} \cdot x+z_{(2)} )+z_{(1)} \cdot R^T  \cdot x- (R \cdot z_{(1)}+ z_{(1)} \cdot R^T) \cdot x+\\
&&-R \cdot z_{(2)}=0.
\end{eqnarray*}
Since $V$ satisfies the determining equations \refeqn{equation_determining2}, $V$ is an infinitesimal symmetry of $\Psi$. The infinitesimal
stochastic transformation $V$ generates a one-parameter group of symmetries of $\Psi$ given by
$$T_a=(\Phi_a,B_a)=\left(\left(
\begin{array}{cc}
\cos(a) & -\sin(a)\\
\sin(a) & \cos(a) \end{array} \right) \cdot x, \left(
\begin{array}{cc}
\cos(a) & -\sin(a)\\
\sin(a) & \cos(a)
\end{array} \right)  \right).$$
In other words, if the law of $(Z_{(1)},Z_{(2)})$ is gauge invariant with respect to rotations, then the SDE $\Psi$ is invariant with respect to
rotations.
\\
Once we have found an infinitesimal symmetry, we can exploit it to transform the SDE $\Psi$ in an equation of a simpler form as  done, for
example,  in \cite{DMU2} for Brownian-motion-driven SDEs.\\ The first step consists in looking for a stochastic transformation $T=(\Phi,B)$ such
that $T_*(V)$ is a strong symmetry (the existence of the transformation $T$ is guaranteed by Theorem \ref{theorem_infinitesimal_SDE1}). In this
specific case the transformation $T$ has the following form
\begin{equation}\label{equation_transformation}
T=(\Phi(x),B(x))=\left(\left(\begin{array}{c}
x^1 \\
x^2\end{array} \right),\left(\begin{array}{cc}
\frac{x^1}{\sqrt{(x^1)^2+(x^2)^2}} & \frac{x^2}{\sqrt{(x^1)^2+(x^2)^2}}\\
\frac{-x^2}{\sqrt{(x^1)^2+(x^2)^2}} & \frac{x^1}{\sqrt{(x^1)^2+(x^2)^2}}\end{array}\right)\right) \end{equation} and the SDE $\Psi'=E_T(\Psi)$
becomes
$$\Psi'(x,z_{(1)},z_{(2)})=\left(\begin{array}{cc}
x^1 & -x^2\\
x^2 & x^1 \end{array} \right)\cdot \left(\begin{array}{c}
z^{1}_{1,(1)}\\
z^2_{1,(1)} \end{array}\right)+ \left(\begin{array}{cc}
\frac{x^1}{\sqrt{(x^1)^2+(x^2)^2}} & \frac{-x^2}{\sqrt{(x^1)^2+(x^2)^2}}\\
\frac{x^2}{\sqrt{(x^1)^2+(x^2)^2}} & \frac{x^1}{\sqrt{(x^1)^2+(x^2)^2}}\end{array}\right) \cdot \left(\begin{array}{c} z^1_{(2)}\\
z^2_{(2)} \end{array}\right).$$ Note that $\Psi'$ does not depend on $z^{1}_{2,(1)},z^{2}_{2,(1)}$ and so the noise is smaller. So the
transformation $T$ has an effect similar to the reduction of redundant Brownian motions in continuous SDE (see \cite{Elworthy1999}).
 Moreover, if we rewrite the transformed SDE in (pseudo)-polar coordinates
\begin{eqnarray*}
\rho&=&(x^1)^2+(x^2)^2\\
\theta&=&\argg(x^1,x^2),
\end{eqnarray*}
where $\argg(a,b)$ is the function  giving the measure of the angle between $(0,1)$ and $(a,b)$ in $\mathbb{R}^2$, we find
\begin{equation}\label{equation_polar1}
\begin{array}{ccl}
\Psi'^{\rho}(\rho,\theta,z)&=&(\sqrt{\rho}z^1_{1,(1)}+z^1_{(2)})^2+(\sqrt{\rho}z^2_{1,(1)}+z^2_{(2)})^2\\
\Psi'^{\theta}(\rho,\theta,z)&=&\theta+\argg(\sqrt{\rho}z^1_{1,(1)}+z^1_{(2)},\sqrt{\rho}z^2_{1,(1)}+z^2_{(2)}),
\end{array}
\end{equation}
The geometrical SDE defined by $(\Psi^{\rho},\Psi^{\theta})$ is a triangular SDE with respect to the solutions processes $(R_t,\Theta_t)$. Indeed
we have
\begin{equation}\label{equation_R}
\begin{array}{rcl}
 dR_t&=&\left(d\left[Z'^1_{(2)},Z'^1_{(2)}\right]^c_t+d\left[Z'^2_{(2)},Z'^2_{(2)}\right]^c_t+(\Delta Z'^1_{(2),t})^2+(\Delta Z'^1_{(2),t})^2\right)+\\
 &&+\sqrt{R_{t_-}}\left(dZ'^1_{(2),t}+2d\left[Z'^1_{(2)},\overline{Z}'^1_1\right]^c_t+2d\left[Z'^2_{(2)},\overline{Z}'^2_1\right]^c_t+2\Delta
\overline{Z}'^{1}_{1,t} \Delta Z'^1_{(2),t}+2\Delta \overline{Z}'^{2}_{1,t} \Delta Z'^2_{(2),t}\right)+\\
&&+
R_{t_-}\left(d\overline{Z}'^1_{1,t}+d\left[\overline{Z}'^1_1,\overline{Z}'^{1}_1\right]_t^c+d\left[\overline{Z}'^2_1,\overline{Z}'^2_1\right]_t^c+(\Delta
\overline{Z}^{1}_{1,t})^2+(\Delta \overline{Z}^2_{1,t})^2\right)
\end{array}
\end{equation}
\begin{equation}\label{equation_Theta}
\begin{array}{rcl}
d\Theta_t&=&\left(-d\overline{Z}'^2_{1,t}+2d\left[\overline{Z}'^{1}_1,\overline{Z}'^2_1\right]^c_t
\right)+\\
&&+\frac{1}{\sqrt{R}_{t_-}}\left(-dZ'^2_{(2),t}+2d\left[\overline{Z}'^{1}_1,Z'^2_{(2)}\right]^c_t
+2d\left[Z'^{1}_{(2)},\overline{Z}'^2_1\right]^c_t\right)+\frac{2}{R_{t_-}}d\left[Z'^{1}_{(2)},Z'^2_{(2)}\right]^c_t+\\
&&+\left(\argg\left(\sqrt{R_{t_-}}(1+\Delta \overline{Z}'^{1}_{1,t})+\Delta Z'^1_{(2),t},\sqrt{R_{t_-}}(\Delta \overline{Z}'^{2}_{1,t})+\Delta
Z'^2_{(2),t}\right) +\Delta \overline{Z}'^2_{1,t}+\frac{\Delta Z'^2_{(2),t}}{\sqrt{R_{t_-}}}\right),
\end{array}
\end{equation}
where
\begin{eqnarray*}
dZ'^i_{(2),t}&=&B^{i}_j(X_{t_-})dZ^j_{(2),t}\\
dZ'^i_{j,(1),t}&=&Z'^i_{k(1),{t_-}}B^{k}_l(X_{t_-})B^j_r(X_{t_-})(Z^{-1}_{(1)})^{l}_{m,t_-} dZ^m_{r,(1),t}\\
d\overline{Z}'^{i}_{j,t}&=&(Z'^{-1}_{(1)})^i_{k,t_-}dZ'^k_{j(1),t} =B^{i}_k(X_{t_-})B^{j}_r(X_{t_-})d\overline{Z}^{k}_{r,t}.
\end{eqnarray*}
Here $B(x)$ is given in \refeqn{equation_transformation}, $X^1_t=\sqrt{R_t}\cos(\Theta_t)$, $X^2_t=\sqrt{R_t}\sin(\Theta_t)$ and
$\overline{Z}^i_j$ are given in equation \refeqn{equation_example2}. SDEs \refeqn{equation_R} and \refeqn{equation_Theta}
 are in triangular form: indeed, the equation for $R_t$ depends only on $R_t$, while the equation for $\Theta_t$ is independent from $\Theta_t$ itself.
 This means that the process $\Theta_t$ can be reconstructed from the process $R_t$ and the semimartingales $(Z'_{(1)},Z'_{(2)})$ using only
 integrations. Furthermore, using the inverse of the stochastic transformation \refeqn{equation_transformation}, we
 can recover both the solution process $X^1_t,X^2_t$ and the initial noise $(Z_{(1)},Z_{(2)})$  using only inversion of functions and
 It\^o integrations. This situation is very similar to what happens in the
 deterministic setting (see \cite{Olver1993,Stephani1989}) and in the Brownian motion case (see \cite{DMU2}),
 where  the presence of a one-parameter symmetry group allows us  to
split the differential system into a system of lower dimension and an integration (the so called reduction and reconstruction by quadratures).
Also the equation for $R_t$ seems to have a familiar form. Indeed, if $Z_{(1)}=I_2$ (the two dimensional identity matrix) and
$Z_{(2)}$ is a two dimensional Brownian motion, equation \refeqn{equation_R} becomes the equation of the two dimensional Bessel process. This
fact should not surprise us since  the proposed reduction procedure  is the usual reduction procedure of a two dimensional Brownian motion with
respect to the rotation group. For  generic $(Z_{(1)},Z_{(2)})$ the equation for $R_t$ has the form
$$dR_t=d\mathfrak{Z}^1_t+\sqrt{R_{t_-}}d\mathfrak{Z}^2_t+R_{t_-}d\mathfrak{Z}^2_t,$$
 where\small
\begin{eqnarray*}
\mathfrak{Z}^1_t&=&\left[Z'^1_{(2)},Z'^1_{(2)}\right]^c_t+\left[Z'^2_{(2)},Z'^2_{(2)}\right]^c_t+\sum_{0 \leq s \leq t}\left((\Delta
Z'^1_{(2),s})^2+(\Delta Z'^1_{(2),s})^2\right)\\
\mathfrak{Z}^2_t&=& Z'^1_{(2),t}+2\left[Z'^1_{(2)},\overline{Z}'^1_1\right]^c_t+2\left[Z'^2_{(2)},\overline{Z}'^2_1\right]^c_t+\sum_{0 \leq s
\leq t}\left(2\Delta \overline{Z}'^{1}_{1,s} \Delta Z'^1_{(2),s}+2\Delta \overline{Z}'^{2}_{1,s} \Delta Z'^2_{(2),s}\right)\\
\mathfrak{Z}^3_t&=&\overline{Z}'^1_{1,t}+\left[\overline{Z}'^1_1,\overline{Z}'^{1}_1\right]_t^c+
\left[\overline{Z}'^2_1,\overline{Z}'^2_1\right]_t^c+\sum_{0 \leq s \leq t}\left((\Delta \overline{Z}'^{1}_{1,t})^2+(\Delta
\overline{Z}'^2_{1,t})^2\right).
\end{eqnarray*}
\normalsize
Equation \refeqn{equation_R}  can be considered a kind of generalization of affine processes (see \cite{Filipovic2003}): indeed, if
 $Z_{(1)}$ is deterministic and $Z_{(2)}$ is a two dimensional Brownian motion, equation \refeqn{equation_R} reduces to a CIR
model equation with time dependent coefficients.

\section{Weak symmetries of numerical approximations of SDEs driven by Brownian motion \label{section_weak_numerical}}

In this last section we deal with the symmetries of the numerical schemes of Brownian-motion driven SDEs as studied in \cite{DeVecchi2017}, where an invariant numerical integrator for symmetric Brownian motion driven SDEs was introduced by means of the identification of a suitable coordinate system for the discretization. Indeed, in  \cite{DeVecchi2017} the concept of strong symmetry of a SDE and of the related numerical scheme was exploited  in order to provide necessary and sufficient conditions  guaranteeing invariant properties  of the adapted-to-symmetries numerical approximations. Moreover, some theoretical estimates showing the stability and efficiency of the symmetry methods for the class of general linear SDEs were proved. \\
In this section we extend the results of \cite{DeVecchi2017} defining the concept of weak symmetry of a numerical discretization of a Brownian-motion-driven SDE.
Using the theory described in this paper, in the notations of Remark \ref{remark_senza},  a weak symmetry of a numerical discretization of a Brownian-motion-driven SDE is a pair $(\Phi,B)$ where $B \in O(k)$ is a gauge symmetry.\\
On the other hand, there is a natural notion of (weak) symmetry for Brownian-motion-driven SDEs (see \cite{DMU1}) and in the following we discuss the relations between the symmetries of the SDE and the (weak) symmetries of its discretization.  \\
After recalling the notion of (weak) symmetry for Brownian-motion-driven SDEs, we exploit the general theory developed in this paper to describe a numerical scheme for a Brownian-motion-driven SDE
as a geometrical SDE
driven by a semimartingale $Z$ related to the Brownian motion $W$. Once  the gauge symmetries of the semimartingale $Z$ are analyzed, we are able to find the symmetries of the numerical
schemes as the symmetries of the geometrical SDE.\\
In particular we focus on the two more common numerical schemes for equation \refeqn{equation_jump_numerical1}: the Euler and the Milstein discretizations.

\subsection{Symmetries of Brownian-motion-driven SDEs}

In this section we recall the notion of weak symmetry for a Brownian-motion-driven SDE as developed in \cite{DMU1}. This notion can be seen as a special case of the theory described in this paper. The fundamental difference is that, in the Brownian motion case, the determining equations \refeqn{equation_determining2} give only a sufficient condition for finding symmetries. This is due to the fact that, since Brownian motion is a continuous semimartingale, there is a huge set of geometrical SDEs $\Psi$ driven by Brownian motion having the same set of solutions. \\
For this reason hereafter a SDE driven by the Brownian motion is no more identified with a geometrical SDE $\Psi$ but with the more standard pair of coefficients $(\mu,\sigma)$. More precisely the SDE $(\mu,\sigma)$ is an equation of the form

\begin{equation}\label{equation_jump_numerical1}
dX^i_t=\mu^i(X_t) dt+ \sigma_{\alpha}^i(X_t)dW^{\alpha}_t.
\end{equation}
\noindent for $\alpha=1,\dots, k$.
A solution $X$ to the SDE \refeqn{equation_jump_numerical1}  is a diffusion process admitting as infinitesimal generator:
\begin{equation}\label{generator}
L= \frac{1}{2}\sigma^i_{\alpha}\sigma^j_{\alpha}\partial_{x^ix^j}+\mu^i\partial_{x^i}.
\end{equation}

Since Brownian motion has both gauge symmetries (given by the rotations group $O(k)$ with its natural action on $\mathbb{R}^k$) and time symmetries (with action $\Gamma_r(z)=r^{1/2}z$), a weak (infinitesimal) symmetry is given by a triplet $(Y,C,\tau)$ with $C:M\rightarrow \mathfrak{so}(k)$ and $\eta:M\rightarrow \mathbb{R}$. We first recall the following theorem.

\begin{theorem}
Let $V=(Y,C,\tau)$ be an infinitesimal stochastic transformation. Then $V$
is a weak symmetry of the SDE $(\mu,\sigma)$ if and only if
$Y$ generates a one parameter group on $M$ and the following determining equations hold

\begin{equation}\label{equation_BW}
\begin{array}{c}
Y^i\partial_{x^i}(\mu^j)-L(Y^j)+\tau \mu^j = 0 \\
 Y^i\partial_{x^i}(\sigma^j_{\alpha})- \sigma^i_{\alpha}\partial_{x^i}(Y^j)+\frac{1}{2} \tau \sigma^i_{\alpha}+\sigma^j_{\beta}  C^{\beta}_{\alpha} = 0.\\
\end{array}
\end{equation}

\end{theorem}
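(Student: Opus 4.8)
The plan is to reduce the statement to an application of the general determining equations \refeqn{equation_determining2}, after re-expressing the Brownian-motion-driven SDE $(\mu,\sigma)$ as a geometrical SDE $\Psi$ driven by an $\mathbb{R}^{k+1}$-valued semimartingale. First I would set $N=\mathbb{R}^{k+1}$, write $Z=(Z^0,Z^1,\dots,Z^k)$ with $Z^0_t=t$ and $(Z^1,\dots,Z^k)=W$ a $k$-dimensional Brownian motion, and take (in the notation of Subsection \ref{subsection_affine})
$$\overline{\Psi}^i(x,z',z)=x^i+\mu^i(x)(z'^0-z^0)+\sigma^i_{\alpha}(x)(z'^{\alpha}-z^{\alpha}).$$
By the computation in Subsection \ref{subsection_affine} this affine $\overline{\Psi}$ produces exactly the solutions of \refeqn{equation_jump_numerical1}, so $(X,Z)$ solves the geometrical SDE $\Psi$ iff $X$ solves $(\mu,\sigma)$; here $\Psi^i(x,z)=x^i+\mu^i(x)z^0+\sigma^i_{\alpha}(x)z^{\alpha}$. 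The gauge group is $\mathcal{G}=SO(k)$ acting by $\Xi_B(z^0,z^1,\dots,z^k)=(z^0,B^{\alpha}_{\beta}z^{\beta})$, which is generated by the vector fields $K_{\ell}$ with $K_{\ell}^0=0$ and $K_{\ell}^{\alpha}=(R_{\ell})^{\alpha}_{\beta}z^{\beta}$ for a basis $R_{\ell}$ of $\mathfrak{so}(k)$; the time action is $\Gamma_r(z^0,z^1,\dots,z^k)=(rz^0,r^{1/2}z^1,\dots,r^{1/2}z^k)$, generated by $H=z^0\partial_{z^0}+\tfrac12 z^{\alpha}\partial_{z^{\alpha}}$.

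Next I would plug these data into the determining equation \refeqn{equation_determining2}, namely
$$Y^i(\Psi(x,z))-Y^j(x)\partial_{x^j}(\Psi^i)(x,z)-\tau(x)H^{\alpha}(z)\partial_{z^{\alpha}}(\Psi^i)(x,z)-C^{\ell}(x)K^{\alpha}_{\ell}(z)\partial_{z^{\alpha}}(\Psi^i)(x,z)=0,$$
and expand each term using $\Psi^i(x,z)=x^i+\mu^i(x)z^0+\sigma^i_{\alpha}(x)z^{\alpha}$. One gets $\partial_{x^j}\Psi^i=\delta^i_j+\partial_{x^j}\mu^i\,z^0+\partial_{x^j}\sigma^i_{\alpha}\,z^{\alpha}$, $\partial_{z^0}\Psi^i=\mu^i$, $\partial_{z^{\alpha}}\Psi^i=\sigma^i_{\alpha}$, and $Y^i(\Psi(x,z))=Y^i(x)+\partial_{x^j}Y^i\,(\mu^j z^0+\sigma^j_{\alpha}z^{\alpha})$ to first order; the subtle point is that $Y^i(\Psi(x,z))$ means $Y^i$ evaluated at the point $\Psi(x,z)\in M$, so I would Taylor-expand $Y^i$ around $x$ and collect terms. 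Then I would organize the resulting identity as a polynomial in the variables $(z^0,z^1,\dots,z^k)$: the constant term vanishes automatically ($Y^i-Y^i=0$), while the coefficient of $z^0$ gives the first equation in \refeqn{equation_BW} and the coefficient of $z^{\alpha}$ gives the second, once one identifies the operator $L$ of \refeqn{generator} with the combination of the Hessian terms arising from the second-order Taylor coefficient of $Y^i$ (contracted against $\sigma^i_{\alpha}\sigma^j_{\alpha}$) plus the first-order drift term. Matching the factors $\tau\mu^j$, $\tfrac12\tau\sigma^j_{\alpha}$ and $\sigma^j_{\beta}C^{\beta}_{\alpha}$ against the $H$- and $K_{\ell}$-contributions is then bookkeeping.

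The one genuinely delicate point, and the reason the statement of this theorem asserts an "if and only if" rather than just "sufficient", is that for Brownian motion the general Theorem \ref{theorem_full} does \emph{not} apply — Brownian motion does not have jumps of any size, so \refeqn{equation_determining2} a priori gives only a sufficient condition. I would therefore not try to derive \refeqn{equation_BW} as a special case of the coordinate determining equations alone, but instead invoke the second generalization discussed in the Remark after Theorem \ref{theorem_full}: for a continuous semimartingale driven by $Z^1_t=t$ and Brownian motions $Z^2,\dots$, the transformation $T$ is a symmetry iff $\partial_{z^{\alpha}}(\Psi)(x,0)=\partial_{z^{\alpha}}(E_T(\Psi))(x,0)$ for the Brownian directions and $\partial_{z^1}(\Psi)(x,0)+\tfrac12\sum_{\alpha}\partial_{z^{\alpha}z^{\alpha}}(\Psi)(x,0)=\partial_{z^1}(E_T(\Psi))(x,0)+\tfrac12\sum_{\alpha}\partial_{z^{\alpha}z^{\alpha}}(E_T(\Psi))(x,0)$. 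Since our $\Psi$ is affine in $z$, the pure second-$z$-derivatives of $\Psi$ vanish and these conditions collapse precisely to the first-order-in-$z$ coefficients of the identity computed above — so the equivalence in \refeqn{equation_BW} follows. The main obstacle is thus purely organizational: carefully tracking the Taylor expansion of $Y^i$ at $\Psi(x,z)$ so that the genuine second-order operator $L$ emerges in the drift equation, and verifying that the Itô correction $\tfrac12\tau\sigma^i_{\alpha}$ in the diffusion equation comes out of the $r^{1/2}$ scaling in $\Gamma_r$ (i.e. of $H^{\alpha}=\tfrac12 z^{\alpha}$) with the correct factor. I would close by remarking that the one-parameter-group condition on $Y$ is exactly the integrability requirement that makes $T_a$ well defined in $S(M)$.
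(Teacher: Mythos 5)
Note first that the paper does not prove this theorem at all: its ``proof'' is a citation of \cite{DMU1}, Theorem 4.4. So your attempt to derive it inside the present framework (affine geometrical SDE $\Psi^i(x,z)=x^i+\mu^i(x)z^0+\sigma^i_\alpha(x)z^\alpha$, gauge action of the orthogonal group, time action $\Gamma_r(z^0,z)=(rz^0,r^{1/2}z)$, and the continuous-semimartingale criterion stated in the Remark after Theorem \ref{theorem_full}) is necessarily a different route, and the skeleton of it is the right one. However, there is a genuine gap in the mechanism you propose for producing the operator $L$ of \refeqn{generator}.

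If you expand \refeqn{equation_determining2} for the affine $\Psi$ as a polynomial identity in $z$, the second-order Taylor coefficients of $Y^i$ (the Hessian terms) multiply the quadratic monomials $z^\alpha z^\beta$, never $z^0$; the coefficient of $z^0$ is $Y^j\partial_{x^j}(\mu^i)-\mu^j\partial_{x^j}(Y^i)+\tau\mu^i$, i.e.\ the pure transport equation \emph{without} the second-order part of $L$, while the quadratic and higher coefficients impose additional constraints. This is exactly the stronger, merely sufficient, system the paper warns about at the start of Section \ref{section_weak_numerical}, and it is not equivalent to \refeqn{equation_BW}. Your subsequent claim that, since $\Psi$ is affine in $z$, the conditions of the Remark ``collapse to the first-order-in-$z$ coefficients of the identity computed above'' is where the argument breaks: $E_{T_a}(\Psi)(x,z)=\Phi_a\left(\Psi\left(\Phi_a^{-1}(x),(\Gamma_{\eta^{-1}}\circ\Xi_{B^{-1}})(z)\right)\right)$ is \emph{not} affine in $z$ because of the outer composition with $\Phi_a$, and it is precisely the term $\tfrac12\sum_\alpha\partial_{z^\alpha z^\alpha}(E_{T_a}\Psi)(x,0)$, whose $a$-derivative at $a=0$ yields $\tfrac12\sigma^i_\alpha\sigma^l_\alpha\partial_{x^ix^l}(Y^j)$, that converts $\mu^i\partial_{x^i}(Y^j)$ into $L(Y^j)$ in the first equation of \refeqn{equation_BW}. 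So the correct computation is to differentiate in $a$ the drift-type coefficient $\partial_{z^0}(E_{T_a}\Psi)(x,0)+\tfrac12\sum_\alpha\partial_{z^\alpha z^\alpha}(E_{T_a}\Psi)(x,0)$ and the diffusion-type coefficients $\partial_{z^\alpha}(E_{T_a}\Psi)(x,0)$, not to read off monomial coefficients of \refeqn{equation_determining2}. Two further points you leave implicit: passing from the finite criterion to the infinitesimal one requires the analogue of Proposition \ref{proposition_determining} in both directions, and the Remark you invoke is itself unproved here (it is Theorem 17 of \cite{DMU1}), so even the repaired argument ultimately rests on the same external source the paper cites; also, the gauge group used in the paper for Brownian motion is $O(k)$, not $SO(k)$, though this is immaterial at the infinitesimal level.
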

\begin{proof}
The proof can be found in \cite{DMU1}, Theorem 4.4.
$\hfill$\end{proof}\\

In the following we shall use the name quasi strong (infinitesimal) transformations or symmetries for stochastic transformations $V$ of the form $V=(Y,C,0)$.

\subsection{Symmetries of the Euler scheme}

The Euler scheme for equation \refeqn{equation_jump_numerical1}, given a partition $\{t_{\ell}\}_{\ell}$ of $[0,T]$, reads
\begin{equation}\label{eulerscheme}
X^{i,N}_{t_{\ell}}=X^{i,N}_{t_{\ell-1}}+\mu(X^N_{t_{\ell-1}})\Delta t_{\ell}+\sum_{\alpha=1}^k\sigma^i_{\alpha}(X^{N}_{t_{\ell-1}})\Delta W^{\alpha}_{\ell}
\end{equation}
where $i=1, \ldots, n$, $\Delta t_{\ell}=t_{\ell}-t_{\ell-1}$ and $\Delta W^{\alpha}_{\ell}=W^{\alpha}_{t_{\ell}}-W^{\alpha}_{t_{\ell-1}}$. We define a semimartingale $Z$ taking values in $N=\mathbb{R}^{k+1}$ by putting
\begin{eqnarray*}\label{semimartingale}
Z^0_t&=&\sum_{\ell=0}^{\infty}t_{\ell} I_{[t_{\ell} ,  t_{\ell+1})}(t)\\
Z^{\alpha}_t&=&\sum_{\ell=0}^{\infty}W^{\alpha}_{t_{\ell}}I_{[t_{\ell} ,  t_{\ell+1})}(t).
\end{eqnarray*}
Since $Z_t$ is a process with predictable jumps at $t_{\ell}$, $Z_t$ is a semimartingale. Let us introduce the maps
$$F^i(x,z)=x^i+\mu(x)z^0+\sigma^i_{\alpha}(x)z^{\alpha}.$$
It is clear that $X^N_t$, solution to \eqref{eulerscheme}, can be seen as the solution to the geometrical  SDE defined by $F$ and driven by the above defined semimartingale $Z$ (according to \eqref{equation_manifold2} with $\Psi^i=F^i$).\\

Let $\Xi_B$ be an action of $O(k)$ on $\mathbb{R}^{k+1}$ such that $\Xi_{B}(z)=(z^0,B^{\alpha}_{\beta}z^{\beta})$, where $  ,B^{\alpha}_{\beta} $ is the $\alpha,\beta$ element of a square matrix of dimension $k$.
The following theorem holds.

\begin{theorem}\label{theorem_discretization1}
The group $O(k)$ with action $\Xi_B$ is a gauge symmetry group for $Z$ with respect to its natural filtration.
\end{theorem}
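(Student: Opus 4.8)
The plan is to reduce the statement to the criterion for gauge symmetries of discrete-time semimartingales with independent increments, namely Theorem \ref{theor_discrete_time}. First I would observe that $N=\mathbb{R}^{k+1}$ is abelian, so the group operation is addition and the only nonzero increments of $Z$ occur at the partition points $t_\ell$, where
\[
\Delta Z_{t_\ell}=\big(Z^0_{t_\ell}-Z^0_{t_{\ell-1}},\,Z^{1:k}_{t_\ell}-Z^{1:k}_{t_{\ell-1}}\big)=(\Delta t_\ell,\,\Delta W_\ell),\qquad \Delta t_\ell=t_\ell-t_{\ell-1},\ \Delta W_\ell=W_{t_\ell}-W_{t_{\ell-1}}.
\]
Since the Brownian increments $\Delta W_\ell$ are independent of $\mathcal{F}^Z_{t_{\ell-1}}=\sigma(W_{t_0},\dots,W_{t_{\ell-1}})$, the process $Z$ is a discrete-time semimartingale with independent increments in the sense of the paper (with the harmless relabelling of the jump times $\{t_\ell\}$ in place of $\{1,2,\dots\}$), whose $\ell$-th jump law is the product measure $\mu_\ell(dz)=\delta_{\Delta t_\ell}(dz^0)\otimes \mathcal{N}(0,\Delta t_\ell I_k)(dz^1,\dots,dz^k)$: a Dirac mass in the time coordinate and an isotropic centered Gaussian in the remaining $k$ coordinates.

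By Theorem \ref{theor_discrete_time}, in the reformulation stated right after it, $Z$ admits $O(k)$ with action $\Xi_B$ as gauge symmetry group exactly when $\Xi_{B*}(\mu_\ell)=\mu_\ell$ for every $\ell$ and every $B\in O(k)$. The map $\Xi_B(z^0,z^{1:k})=(z^0,Bz^{1:k})$ leaves the time coordinate fixed and acts linearly by $B$ on the last $k$ coordinates, so the pushforward of the product measure $\mu_\ell$ is again a product, $\delta_{\Delta t_\ell}(dz^0)\otimes\big(B_*\mathcal{N}(0,\Delta t_\ell I_k)\big)$. Since $B$ is orthogonal, $B_*\mathcal{N}(0,\Delta t_\ell I_k)=\mathcal{N}(0,B(\Delta t_\ell I_k)B^{T})=\mathcal{N}(0,\Delta t_\ell I_k)$, which is precisely the rotation invariance of the Gaussian law underlying \refeqn{equation_gauge1}. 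Hence $\Xi_{B*}(\mu_\ell)=\mu_\ell$ for all $\ell$ and all $B\in O(k)$, and Theorem \ref{theor_discrete_time} gives the conclusion.

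For completeness I would remark that the transformed process $\tilde Z$, defined by $d\tilde Z_t=\Xi_{G_t}(dZ_t)$ for a predictable $G_t\in O(k)$, is globally defined: since $\Xi_B$ is linear, formula \refeqn{equation_semimartingales_change3} shows that the second-order and jump-correction terms vanish, so $\tilde Z^0_t=Z^0_t$ and $\tilde Z^\alpha_t=\int_0^t(G_s)^\alpha_\beta\,dZ^\beta_s$ for $\alpha=1,\dots,k$, a pure-jump semimartingale with finitely many jumps on each compact interval. One can also argue the equality of laws directly: conditionally on $\mathcal{F}^Z_{t_{\ell-1}}$ the orthogonal matrix $G_{t_\ell}$ is deterministic and $\Delta W_\ell$ is an independent $\mathcal{N}(0,\Delta t_\ell I_k)$ vector, so $G_{t_\ell}\Delta W_\ell$ has the same conditional law $\mathcal{N}(0,\Delta t_\ell I_k)$; since this holds for every $\ell$, $\tilde Z$ and $Z$ have the same finite-dimensional distributions, hence the same law on $\mathcal{D}([0,T],\mathbb{R}^{k+1})$.

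The only delicate point is the bookkeeping needed to place $Z$ within the hypotheses of Theorem \ref{theor_discrete_time}: one must verify that the independence-of-increments property holds with respect to the \emph{natural} filtration of $Z$ (which at times in $[t_\ell,t_{\ell+1})$ is $\sigma(W_{t_0},\dots,W_{t_\ell})$), and check that taking the deterministic jump times to be the partition points $t_\ell$ rather than the integers does not affect the argument; both are immediate but should be spelled out. Everything else is the elementary fact that an isotropic Gaussian is invariant under the orthogonal group.
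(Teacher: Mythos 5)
Your proof is correct, but it follows a genuinely different route from the paper's. The paper verifies Definition \ref{definition_gauge} directly in continuous time: given an $\mathcal{F}^Z$-predictable $O(k)$-valued process $B_t$, it extends it to an $\mathcal{F}^W$-predictable process $\tilde{B}_t$ constant on each interval $(t_{\ell-1},t_\ell]$, invokes the gauge invariance of Brownian motion under predictable rotations (property \refeqn{equation_gauge1}) to obtain a new Brownian motion $\tilde{W}$, and then identifies the gauge-transformed process $\tilde{Z}$ with the discretization of $\tilde{W}$, which has the same law as $Z$. You instead regard $Z$ as a discrete-time semimartingale with independent increments whose $\ell$-th jump law is $\delta_{\Delta t_\ell}\otimes\mathcal{N}(0,\Delta t_\ell I_k)$ and apply the criterion of Theorem \ref{theor_discrete_time}, so that invariance under arbitrary predictable gauge processes reduces to invariance of each jump law under the deterministic action $\Xi_B$, i.e.\ to the rotation invariance of the isotropic Gaussian. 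Your route is more elementary at the probabilistic level (no appeal to the continuous-time rotation invariance of $W$) and the state-dependent, predictable case comes for free from the cited theorem; its cost is that Theorem \ref{theor_discrete_time} is stated, following the convention of Subsection \ref{subsection_iterated}, for jumps at integer times, so you must note --- as you do --- that it applies verbatim to deterministic jump times $t_\ell$ and non-identically distributed jump laws, and that the independence of increments holds with respect to the natural filtration of $Z$ (here one also uses that $G_{t_\ell}$, being predictable, is $\mathcal{F}^Z_{t_\ell-}=\mathcal{F}^Z_{t_{\ell-1}}$-measurable). Your ``for completeness'' conditional-law computation is in fact a self-contained direct verification of Definition \ref{definition_gauge}, close in spirit to the paper's argument but conditioning on the coarse filtration rather than lifting to the Brownian one; the paper's lifting argument buys the slightly stronger structural fact that the transformed noise is literally the discretization of another Brownian motion.
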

\begin{proof}
We directly use the definition of gauge symmetry group. Let $\mathcal{F}^W_t$ be the natural filtration of the $k$ dimensional Brownian motion $W$. Let $B_t$ be a predictable locally bounded process taking values in $O(k)$ with respect to the natural filtration
of $Z$  $\mathcal{F}^Z_t$ . This means that $B_{t_{\ell}}$ is $\mathcal{F}^W_{t_{\ell-1}}$ measurable. Moreover, the process $\tilde{B}_t$, defined by
$$\tilde{B}_t=\sum_{\ell=0}^{\infty}B_{t_{\ell}}I_{(t_{\ell-1},t_{\ell}]}(t)$$
is an $\mathcal{F}^W_t$-predictable locally bounded process taking values in $O(k)$. Since the rotations are a gauge symmetry group for the $k$ dimensional Brownian motion with respect to its natural filtration, we have that
\begin{equation}\label{equationone}
\tilde{W}^{\alpha}_{t}= \int_0^{t}{\tilde{B}^{\alpha}_{\beta,s}dW^{\beta}_s} =\sum_{\ell=1}^{+\infty}
B^{\alpha}_{\beta,t_{\ell}}(W^{\beta}_{t_{\ell} \wedge
t}-W^{\beta}_{t_{\ell-1} \wedge t})
\end{equation}
is a $k$ dimensional Brownian motion.
In particular the following discretization
\begin{equation}\label{equationtwo}
\tilde{Z}^{\alpha}_t=\sum_{\ell=0}^{\infty}\tilde{W}^{\alpha}_{t_{\ell}}I_{[t_{\ell} ,  t_{\ell+1})}(t)
\end{equation}
has the same law as $Z$ (as can be immediately seen by \eqref{semimartingale} and \eqref{equationtwo}). Since
\begin{eqnarray*}
\tilde{Z}^{\alpha}_{t_{i}}&=&\int_0^{t_i}{B^{\alpha}_{\beta,s}dZ^{\beta}_s}=\sum_{\ell=1}^{i}B^{\alpha}_{\beta,t_{\ell}} (Z^{\beta}_{t_{\ell-1}}-Z^{\beta}_{t_{\ell}})\\
&=&\sum_{\ell=1}^{i}B^{\alpha}_{\beta,t_{\ell}} (W^{\beta}_{t_{\ell}}-W^{\beta}_{t_{\ell-1}})=\tilde{W}^{\alpha}_{t_i}
\end{eqnarray*}
and $B_t$ is a generic predictable locally bounded process with respect to $\mathcal{F}^Z_t$, the thesis follows.
${}\hfill$\end{proof}\\

\noindent Using the previous discussion and Theorem \ref{theorem_discretization1} we can introduce the  concept of weak symmetry of the Euler
discretization scheme $F(x,z)$. Indeed the weak stochastic transformation $T=(\Phi(x),B(x))$, which acts on the solution to the Euler
discretization scheme in the following way $(X'^N,Z')=P_T(X,Z)$ and precisely as
\begin{eqnarray*}
X'^N_{t_{\ell}}&=&\Phi(X^N_{t_{\ell}})\\
\Delta Z'^{\alpha}_{t_{\ell}} &=&B^{\alpha}_{\beta}(X^N_{t_{\ell-1}}) \Delta Z^{\beta}_{t_{\ell}}=
B^{\alpha}_{\beta}(X^N_{t_{\ell-1}}) \Delta W^{\beta}_{\ell}\\
\Delta Z^0_{t_{\ell}}&=&\Delta t_{\ell}.
\end{eqnarray*}
is a symmetry of the Euler discretization scheme if $(X',Z')$ is also a solution to the discretization scheme defined by $F$. By Theorem
\ref{theorem_symmetry1}, a sufficient
condition to be a symmetry of $F$ is
\begin{equation}\label{equation_euler_scheme1}
\Phi(F(\Phi^{-1}(x),\Delta t, (B \circ \Phi^{-1}(x))^{-1} \cdot \Delta W)=F(x,\Delta t,\Delta W),
\end{equation}
where $z=(\Delta t,\Delta W)$. For a given weak infinitesimal stochastic transformation $(Y,C)$ the determining equations reads
\begin{equation}
Y^j(x)\partial_{x^j}(F^i)(x,\Delta t, \Delta W)-F^j(x,\Delta t, \Delta W) \partial_{x^j}(Y^i)(x)
=-C^{\alpha}_{\beta}(x)\Delta W^{\beta} \partial_{\Delta W^{\alpha}}(F^i)(x,\Delta t ,\Delta W).
\end{equation}

The following theorem proposes a generalization of Theorem 3.1 in \cite{DeVecchi2017} to the case of weak stochastic transformations.

\begin{theorem}\label{theorem_euler2}
Let $V=(Y,C,0)$ be a quasi strong symmetry of the SDE  \refeqn{equation_jump_numerical1}. When $Y^i_j=Y_j(x^i)$ are polynomials of first degree in $x^1,...,x^m$,
then $\tilde{V}=(Y,C) \in \mathcal{V}(M)$ is a weak symmetry of the Euler discretization scheme $F$. If, for a given $x_0 \in M$,
$\spann\{\sigma_1(x_0),\dots,\sigma_{m}(x_0) \}=\mathbb{R}^n$, also the converse holds.
\end{theorem}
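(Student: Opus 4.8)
The plan is to use the identification, already made above, of the Euler scheme \eqref{eulerscheme} with the geometrical SDE $F^i(x,z)=x^i+\mu^i(x)z^0+\sigma^i_\alpha(x)z^\alpha$ driven by the pure-jump semimartingale $Z$ introduced just before the theorem. Its jumps occur at the deterministic times $t_\ell$, with $\Delta Z^0_{t_\ell}=\Delta t_\ell$ deterministic and $\Delta Z^\alpha_{t_\ell}=\Delta W^\alpha_\ell$ conditionally Gaussian and non-degenerate, so the support of the jump measure of $Z$ is $\{\Delta t_\ell\}_\ell\times\mathbb R^k$, and $Z$ is purely discontinuous with vanishing continuous and drift parts (with the zero truncation functions). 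Hence the pure-jump version of Theorem \ref{theorem_full} discussed in the Remark following it, together with Proposition \ref{proposition_determining}, applies, and the first step is to record that $\tilde V=(Y,C)$ is a weak symmetry of $F$ if and only if the determining equation \eqref{equation_determining2}, written for $\Psi=F$ and $\tau=0$, holds for every $z=(\Delta t_\ell,w)$ with $\ell$ arbitrary and $w\in\mathbb R^k$ arbitrary. Using $\partial_{z^0}F^i=\mu^i$, $\partial_{z^\alpha}F^i=\sigma^i_\alpha$ and the fact that the generators $K_\ell$ of $\Xi_B$ act only on the $(z^1,\dots,z^k)$-block, this reads
\begin{equation}\label{eq_euler_det}
Y^i(F(x,z))-Y^i(x)-z^0\,Y^j(x)\partial_{x^j}\mu^i(x)-z^\alpha\,Y^j(x)\partial_{x^j}\sigma^i_\alpha(x)-C^\alpha_\beta(x)\,z^\beta\sigma^i_\alpha(x)=0 .
\end{equation}

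For sufficiency I would use that, when the components $Y^i$ are first-degree polynomials, $Y^i(F(x,z))-Y^i(x)$ is linear in the increment $F(x,z)-x$ and the constant parts of the $Y^i$ cancel automatically; \eqref{eq_euler_det} then separates, according to the powers of $z^0$ and of $z^\alpha$, into the two identities $\mu^j\partial_{x^j}Y^i-Y^j\partial_{x^j}\mu^i=0$ and $\sigma^j_\alpha\partial_{x^j}Y^i-Y^j\partial_{x^j}\sigma^i_\alpha-\sigma^i_\beta C^\beta_\alpha=0$. Since $Y$ affine forces $L(Y^i)=\mu^j\partial_{x^j}Y^i$, the first identity is exactly the first equation in \eqref{equation_BW} at $\tau=0$ and the second one is $(-1)$ times the second equation there; both therefore hold because, by hypothesis, $V=(Y,C,0)$ is a quasi strong symmetry of \eqref{equation_jump_numerical1}. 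As an affine vector field generates a global one-parameter group on $M$ and \eqref{equation_infinitesimal_SDE2} is solvable for $B_a$ inside the compact group $O(k)$, the hypotheses of Proposition \ref{proposition_determining} are met and $\tilde V$ is a weak symmetry of the Euler scheme.

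For the converse I would read \eqref{eq_euler_det} in the other direction, with $z^0=\Delta t_\ell$ fixed and $w=(z^1,\dots,z^k)\in\mathbb R^k$ free. The only term of \eqref{eq_euler_det} which is not affine in $w$ is $Y^i(F(x,z))$, so differentiating twice in $w$ gives $\sigma^j_\gamma(x)\sigma^l_\delta(x)\,\partial_{x^jx^l}Y^i(F(x,z))=0$ for all $w$. Evaluating at $x=x_0$ and using $\spann\{\sigma_1(x_0),\dots,\sigma_m(x_0)\}=\mathbb R^n$, the point $F(x_0,z)=x_0+\mu(x_0)\Delta t_\ell+\sigma(x_0)w$ sweeps all of $\mathbb R^n$ as $w$ varies, and contracting the identity with an arbitrary vector $v$ the vector $\sigma(x_0)v$ does too; polarising the resulting quadratic form shows that $\partial_{x^jx^l}Y^i$ vanishes identically, i.e.\ the $Y^i$ are first-degree polynomials. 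Feeding this back into \eqref{eq_euler_det} and matching, now legitimately, the $w$-linear and the $w$-independent parts (the latter up to the nonzero factor $\Delta t_\ell$) reproduces exactly the two equations \eqref{equation_BW} with $\tau=0$, that is, $V=(Y,C,0)$ is a quasi strong symmetry of \eqref{equation_jump_numerical1}.

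I expect the real obstacle to be precisely this Hessian argument in the converse: it is the single place where the ellipticity condition at $x_0$ is indispensable, for without $\sigma(x_0)$ spanning the state space one cannot force $Y$ to be affine, and then the equivalence between weak symmetries of the Euler scheme and $\tau=0$ symmetries of the SDE fails. The only other point that needs some care is the (routine) determination of the support of the jump measure of $Z$ — full in the Brownian directions, a single deterministic value in the time direction — which is what allows the pure-jump version of Theorem \ref{theorem_full} to be invoked and, together with the gauge symmetry of $Z$ established in Theorem \ref{theorem_discretization1}, guarantees that $P_{\tilde T}$ sends a class-$\mathcal C$ solution of the scheme to a class-$\mathcal C$ solution.
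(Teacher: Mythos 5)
Your proposal is correct and follows essentially the same route as the paper's proof: your equation for the scheme is exactly the paper's \refeqn{equation_weak_euler4}, the sufficiency part is the same separation in powers of $\Delta t$ and $\Delta W$ using \refeqn{equation_BW} with $\tau=0$ together with the vanishing Hessian of an affine $Y$, and the converse is the same second-derivative-in-$\Delta W$ plus spanning argument that the paper leaves implicit in the step ``\refeqn{equation_weak_euler5} implies $Y^i$ is linear''. The only differences are minor: your converse does not pre-substitute \refeqn{equation_BW} (so it recovers the quasi strong symmetry of the SDE rather than assuming it, a marginally stronger statement), and your appeal to the pure-jump remark following Theorem \ref{theorem_full} is slightly loose, since the discretized noise jumps only at the fixed times $t_\ell$ with deterministic $z^0$-component rather than having jump measure supported on $J\times\mathbb{R}_+$, although the necessity of the determining equation on $\{\Delta t_\ell\}\times\mathbb{R}^k$ that you actually use follows from an elementary one-step comparison using the full support of the Gaussian increments.
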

\begin{proof}
The proof is similar to the one of Theorem 3.1 in \cite{DeVecchi2017}. 
The determining equations for the special case of the Euler discretization method are
\begin{equation}\label{equation_weak_euler4}
Y^i\left(x+\mu \Delta t+ \sigma_{\alpha} \Delta W^{\alpha} \right)-Y^i- Y^j \partial_{x^j}(\mu^i)\Delta t - Y^j \partial_{x^j}(\sigma^i_{\alpha})\Delta W^{\alpha}- \sigma^i_{\beta} C^{\beta}_{\alpha}\Delta W^{\alpha}=0.
\end{equation}
Using equations \refeqn{equation_BW} in equation \refeqn{equation_weak_euler4} we obtain
\begin{equation}\label{equation_weak_euler5}
Y^i\left(x+\mu \Delta t+ \sigma_{\alpha} \Delta W^{\alpha} \right) = Y^i + \mu^j \partial_{x^j}(Y^i) \Delta t+ A^{lk}\partial_{x^kx^l}(Y^i)
\Delta t+ \sigma^j_{\alpha}\partial_{x^j}(Y^i) \Delta W^{\alpha}.
\end{equation}
If $Y^i$ is linear in $x^j$, equation \refeqn{equation_weak_euler5} is satisfied and $(Y,C)$ is a weak symmetry of the Euler discretization scheme. Conversely, if $(Y,C)$ is a symmetry of the Euler discretization scheme and the condition assumed for $\sigma_{\alpha}$ holds, equation \refeqn{equation_weak_euler5} implies that $Y^i$ is linear in $x^j$.
${}\hfill$\end{proof}

\subsection{Symmetries of the Milstein scheme}

The Milstein scheme has the form
\begin{equation}\label{equation_Milstein}
\begin{array}{rcl}
\bar{X}^{i,N}_{t_{\ell}}&=&\bar{X}^{i,N}_{t_{\ell-1}}+\mu^i(\bar{X}_{t_{\ell-1}}) \Delta t_{\ell} +\sum_{\alpha=1}^k \sigma^i_{\alpha}(\bar{X}_{t_{\ell-1}}) \Delta W^{\alpha}_{\ell}+\\
&&+\frac{1}{2}\sum_{j=1}^m\sum_{\alpha,\beta=1}^k\sigma^j_{\alpha}(\bar{X}_{t_{\ell-1}}) \partial_{j}(\sigma^i_{\beta})(\bar{X}_{t_{\ell-1}}) \Delta
\mathbb{W}^{\alpha,\beta}_{\ell},
\end{array}
\end{equation}
where $\Delta \mathbb{W}^{\alpha,\beta}_{\ell}=\int_{t_{\ell-1}}^{t_{\ell}}{(W^{\beta}_s-W^{\beta}_{t_{\ell-1}})dW^{\alpha}_s}$. We recall that when $n=1$ we
have that
$$\Delta \mathbb{W}^{1,1}_{\ell}=\frac{1}{2}((\Delta W_{\ell})^2-\Delta t_{\ell}).$$

We cannot consider the scheme as a geometrical SDE driven by a semimartingale in $\mathbb{R}^{k+1}$ anymore because in this case the driving noise is composed by both the discretization of the Brownian motion $W$ and by the iterated integral $\mathbb{W}^{\alpha,\beta}_t=\int_0^t{W^{\alpha}_sdW^{\beta}_s}$. Therefore in this case
$$N=\mathbb{R} \oplus \mathbb{R}^k \oplus (\mathbb{R}^k \otimes \mathbb{R}^k),$$
and the semimartingale $(t,W_t,\mathbb{W}_t)$ lives exactly in $N$. The vector space $N$ has a natural Lie group structure with composition given by
$$(\alpha_1,a_1,b_1) \circ (\alpha_2,a_2,b_2)=(\alpha_1+\alpha_2,a_1+a_2,a_2 \otimes a_1 + b_1+ b_2),$$
where $\alpha_1,\alpha_2 \in \mathbb{R}$, $a_1,a_2 \in \mathbb{R}^k$ and $b_1,b_2 \in \mathbb{R}^k \otimes \mathbb{R}^k$. In this case $1_N=(0,0,0 \otimes 0)$, while the inverse operation is given by
$$(\alpha_1,a_1, b_1)^{-1}=(-\alpha_1,-a_1,-b_1+a_1 \otimes a_1).$$
Let $Z=(Z^0,Z_1^{\alpha},Z_2^{\alpha,\beta})$ be the semimartingale given by the discretization of $(t,W_t,\mathbb{W}_t)$, that is
\begin{eqnarray*}
Z^0_t&=&t_{\ell} \text{ if }t_{\ell} \leq t < t_{\ell+1}\\
Z^{\alpha}_{1,t}&=&W^{\alpha}_{t_{\ell}} \text{ if }t_{\ell} \leq t < t_{\ell+1}\\
Z^{\alpha,\beta}_{2,t}&=&\int_0^{t_{\ell}}{W^{\alpha}_{s}dW^{\beta}_s} \text{ if }t_{\ell} \leq t < t_{\ell+1}.
\end{eqnarray*}
It is simple to see that
\begin{eqnarray*}
Z_{t_{\ell}} \circ Z^{-1}_{t_{\ell-1}}&=&\left(t_{\ell}-t_{\ell-1},W^{\alpha}_{t_{\ell}}-W^{\alpha}_{t_{\ell-1}},\int_0^{t_{\ell}}{W^{\alpha}_sdW^{\beta}_s}+\right.\\
&&\left.-\int_0^{t_{\ell-1}}{W^{\alpha}_sdW^{\beta}_s}-W^{\alpha}_{t_{\ell-1}}W^{\beta}_{t_{\ell}}+W^{\alpha}_{t_{\ell-1}}W^{\beta}_{t_{\ell-1}}\right)\\
&=&\left(\Delta t_{\ell},\Delta W^{\alpha}_{\ell},\int_{t_{\ell-1}}^{t_{\ell}}{(W^{\alpha}_s-W^{\alpha}_{t_{\ell-1}})dW^{\beta}_s}\right)\\
&=&(\Delta t_{\ell},\Delta W^{\alpha}_{\ell}, \Delta \mathbb{W}^{\alpha\beta}_{\ell}).
\end{eqnarray*}
It is possible to define, in a natural way, an action $\Xi_B:N \rightarrow N$ of the orthogonal matrices $B \in O(k)$ on $N$. Indeed if $B \in N$ we define $\Xi_B$ as the unique linear map from $N$ into itself such that
$$\Xi_B((\alpha,a,b \otimes c))=(\alpha,B \cdot a, (B \cdot b) \otimes (B \cdot c)),$$
for any $a,b,c \in \mathbb{R}^k$.

\begin{theorem}\label{theorem_discretrization2}
The Lie group $O(k)$ with action $\Xi_B$ is a gauge symmetry group for the discretization $Z=(Z^0,Z_1^{\alpha},Z_2^{\alpha,\beta})$ of
$(t,W_t,\mathbb{W}_t)$, with respect to its natural filtration.
\end{theorem}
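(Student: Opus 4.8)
The plan is to follow the proof of Theorem \ref{theorem_discretization1}, reducing the claim to the rotation invariance of Brownian motion together with the behaviour of the iterated integral under such rotations. Let $B_t$ be an arbitrary $\mathcal{F}^Z_t$-predictable locally bounded process with values in $O(k)$. Exactly as in Theorem \ref{theorem_discretization1}, since $Z$ has (predictable) jumps only at the grid points $t_\ell$ and $\mathcal{F}^Z_{t_{\ell-1}}\subseteq\mathcal{F}^W_{t_{\ell-1}}$ ($\mathbb{W}$ being adapted to $W$), the value $B_{t_\ell}$ is $\mathcal{F}^W_{t_{\ell-1}}$-measurable, so $\tilde{B}_t:=\sum_{\ell}B_{t_\ell}I_{(t_{\ell-1},t_\ell]}(t)$ is an $\mathcal{F}^W_t$-predictable locally bounded $O(k)$-valued process. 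By the gauge symmetry of $k$-dimensional Brownian motion, $\tilde{W}^\alpha_t:=\int_0^t\tilde{B}^\alpha_{\beta,s}\,dW^\beta_s$ is again a $k$-dimensional Brownian motion (note $[\tilde W^\alpha,\tilde W^\beta]_t=\delta^{\alpha\beta}t$ since $\tilde B_s\in O(k)$). Setting $\tilde{\mathbb{W}}^{\alpha\beta}_t:=\int_0^t\tilde W^\alpha_s\,d\tilde W^\beta_s$, the pair $(\tilde W,\tilde{\mathbb{W}})$ is obtained from $\tilde W$ by the same measurable functional that produces $(W,\mathbb{W})$ from $W$, hence it has the same law as $(W,\mathbb{W})$; consequently the discretization $\tilde Z=(Z^0,\tilde Z_1,\tilde Z_2)$ built from $(\tilde W,\tilde{\mathbb{W}})$ exactly as $Z$ is built from $(W,\mathbb{W})$ has the same law as $Z$.

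It then suffices to identify $\tilde Z$ with the gauge transform $d\hat Z_t=\Xi_{B_t}(dZ_t)$. Since $Z$ is a pure-jump semimartingale jumping only at the deterministic times $t_\ell$, the solution $\hat Z$ of the geometrical SDE $\Psi^{\Xi}_{B_t}(x,z)=\Xi_{B_t}(z)\cdot x$ reduces (cf. \refeqn{equation_difference}) to the iterated-random-map recursion
$$\hat Z_{t_\ell}=\Xi_{B_{t_\ell}}(\Delta Z_{t_\ell})\circ\hat Z_{t_{\ell-1}},\qquad \hat Z_{t_0}=1_N,$$
where $\Delta Z_{t_\ell}=(\Delta t_\ell,\Delta W_\ell,\Delta\mathbb{W}_\ell)$ with $\Delta\mathbb{W}_\ell=\int_{t_{\ell-1}}^{t_\ell}(W_s-W_{t_{\ell-1}})\otimes dW_s$, and $\Xi_{B_{t_\ell}}(\Delta Z_{t_\ell})=(\Delta t_\ell,\,B_{t_\ell}\Delta W_\ell,\,(B_{t_\ell}\otimes B_{t_\ell})\Delta\mathbb{W}_\ell)$. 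The first two components of the recursion give $\hat Z^0_{t_\ell}=t_\ell$ and $\hat Z_{1,t_\ell}=\sum_{j\le\ell}B_{t_j}\Delta W_j=\tilde W_{t_\ell}$, in agreement with $\tilde Z$. For the $(\mathbb{R}^k\otimes\mathbb{R}^k)$-component, the non-abelian group law of $N$ contributes the cocycle term $a_2\otimes a_1$, so that $\hat Z_{2,t_\ell}-\hat Z_{2,t_{\ell-1}}=(B_{t_\ell}\otimes B_{t_\ell})\Delta\mathbb{W}_\ell+\tilde W_{t_{\ell-1}}\otimes(B_{t_\ell}\Delta W_\ell)$, and one checks, using $\tilde W_s=\tilde W_{t_{\ell-1}}+B_{t_\ell}(W_s-W_{t_{\ell-1}})$ for $s\in[t_{\ell-1},t_\ell]$, that this equals $\int_{t_{\ell-1}}^{t_\ell}\tilde W_s\otimes d\tilde W_s=\tilde{\mathbb{W}}_{t_\ell}-\tilde{\mathbb{W}}_{t_{\ell-1}}$. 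Hence by induction $\hat Z_{2,t_\ell}=\tilde{\mathbb{W}}_{t_\ell}$, i.e. $\hat Z=\tilde Z$; since $B_t$ was arbitrary and $\tilde Z$ has the same law as $Z$, this proves that $O(k)$ with action $\Xi_B$ is a gauge symmetry group for $Z$ with respect to its natural filtration.

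The filtration bookkeeping, the first two components of the recursion and the measurability of the discretization map are routine parallels of Theorem \ref{theorem_discretization1}; the one genuinely new point — and hence the main obstacle — is the identity for the $(\mathbb{R}^k\otimes\mathbb{R}^k)$-component, where one must verify that the increment of the iterated integral of the rotated Brownian motion splits into the rotated iterated increment $(B\otimes B)\Delta\mathbb{W}$ plus a cross term $\tilde W_{t_{\ell-1}}\otimes(B\Delta W)$ coinciding exactly with the cocycle $a_2\otimes a_1$ in the multiplication of $N=\mathbb{R}\oplus\mathbb{R}^k\oplus(\mathbb{R}^k\otimes\mathbb{R}^k)$. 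This matching is precisely what forces $\Xi_B$ to act as $b\otimes c\mapsto(Bb)\otimes(Bc)$ on the third summand, and it is the only place where the L\'evy-area structure of $N$ and of the iterated integral genuinely enters the argument.
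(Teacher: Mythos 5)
Your proof is correct and takes essentially the same route as the paper's: rotate the Brownian motion by the piecewise-constant predictable process built from $B$, observe it is again a Brownian motion, and check that the gauge-transformed third component reproduces the iterated integral of the rotated motion, whence the transformed discretization has the law of $Z$. The only difference is presentational — you verify the L\'evy-area identity increment by increment via the group recursion and induction, while the paper expands $\int_0^{t_\ell} W'^{\alpha}_s\,dW'^{\beta}_s$ into a global double sum and matches it with the explicit expression for $Z'^{\alpha\beta}_{2,t_\ell}$ — but the underlying computation is identical.
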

\begin{proof}
Let us define
$$W'_t=\sum_{t_k \leq t} B_{t_k} \cdot \Delta W_{k}+ B_{t_{\ell}}  (W_t-W_{t_{\ell}}),$$
where $t_k$ is the last discrete time lower than $t$. The thesis of the theorem is equivalent to prove that
$$\mathbb{W}'^{\alpha\beta}_{t_{\ell}}=\int_0^{t_{\ell}}{W'^{\alpha}_sdW'^{\beta}_s }=Z'^{\alpha\beta}_{2,t_{\ell}}$$
where $dZ'_t=\Xi_{B}(dZ_t)$, that is, in particular, the third component of $Z'_{t_{\ell}}$  is given by
\begin{eqnarray*}
Z'^{\alpha\beta}_{2,t_{\ell}}&=&\sum_{k \leq \ell} B^{\alpha}_{\gamma,t_k}B^{\beta}_{\delta,t_k} \Delta Z_{2,t_{k}}^{\gamma\delta}+\sum_{h \leq k < \ell}
B^{\alpha}_{\gamma,t_h}B^{\beta}_{\delta,t_k}\Delta Z_{1,t_h}^{\gamma} \Delta Z^{\delta}_{1,t_k}\\
&=&\sum_{k \leq \ell} B^{\alpha}_{\gamma,t_k}B^{\beta}_{\delta,t_k} \Delta \mathbb{W}^{\gamma\delta}_{k}+\sum_{h \leq k < \ell}
B^{\alpha}_{\gamma,t_h}B^{\beta}_{\delta,t_k}\Delta W_{h}^{\gamma} \Delta W^{\delta}_{k}.
\end{eqnarray*}
We have that
\begin{eqnarray*}
\mathbb{W}'^{\alpha\beta}_{t_{\ell}}&=&\int_0^{t_{\ell}}{W'^{\alpha}_sdW'^{\beta}_s }\\
&=&\int_{0}^{t_{\ell}}{\left(\sum_{t_k \leq s} B^{\alpha}_{\gamma,s} \cdot \Delta W^{\gamma}_{k}+ B^{\alpha}_{\gamma,t_{\ell}}  (W^{\gamma}_s-W^{\gamma}_{t_k}) \right)dW'^{\beta}_s}\\
&=&\sum_{h \leq k < \ell}B^{\alpha}_{\gamma,t_h}\Delta W^{\gamma}_h \Delta W'^{\beta}_k + \sum_{k \leq
\ell}B^{\alpha}_{\gamma,t_k}\int_{t_{k-1}}^{t_k}{(W^{\gamma}_s-W^{\gamma}_{t_{k-1}})dW'^{\beta}_s}\\
&=&\sum_{h \leq k < \ell}B^{\alpha}_{\gamma,t_h}B^{\beta}_{\delta,t_k}\Delta W^{\gamma}_h \Delta W^{\delta}_k + \sum_{k \leq
\ell}B^{\alpha}_{\gamma,t_k}B^{\beta}_{\delta,t_k}\int_{t_{k-1}}^{t_k}{(W^{\gamma}_s-W^{\gamma}_{t_{k-1}})dW^{\delta}_s}\\
&=&\sum_{h \leq k < \ell}B^{\alpha}_{\gamma,t_h}B^{\beta}_{\delta,t_k}\Delta W^{\gamma}_h \Delta W^{\delta}_k + \sum_{k \leq
\ell}B^{\alpha}_{\gamma,t_k}B^{\beta}_{\delta,t_k}\Delta \mathbb{W}^{\gamma\delta}_k=Z'^{\alpha,\beta}_{2,t_{\ell}}.
\end{eqnarray*}
${}\hfill$\end{proof}\\

\noindent Thanks to Theorem \ref{theorem_discretrization2} we can introduce the concept of weak symmetry of a Milstein type discretization scheme.
 We can see the solution $\bar{X}_{\ell}$ of the Milstein scheme \refeqn{equation_Milstein} as an iterated random map defined by the geometrical
 SDE $F(x,z)=F(x,\Delta t,\Delta W,\Delta
\mathbb{W})$, where $F$ has the form
\begin{eqnarray*}
F^i(x,\Delta t, \Delta W, \Delta \mathbb{W})&=&x^i+\mu^i(x) \Delta t_n +\sum_{\alpha=1}^k \sigma^i_{\alpha}(x) \Delta W^{\alpha}+\\
&&+\frac{1}{2}\sum_{j=1}^n\sum_{\alpha,\beta=1}^k\sigma^j_{\alpha}(x) \partial_{j}(\sigma^i_{\beta})(x) \Delta \mathbb{W}^{\alpha,\beta},
\end{eqnarray*}
and driven by the semimartingale $Z$ on the Lie group $N$ with its natural composition described
above.\\
The weak stochastic transformation $T=(\Phi(x),B(x))$ acts on the solution to the Milstein discretization scheme in the following way
$(X'^N,Z')=P_T(X^N,Z)$, that is
\begin{eqnarray*}
\bar{X}'^N_{t_{\ell}}&=&\Phi(\bar{X}^N_{t_{\ell}})\\
\Delta Z'^{\alpha}_{1,t_{\ell}} &=&B^{\alpha}_{\beta}(\bar{X}^N_{t_{\ell-1}})
\Delta Z^{\beta}_{1,t_{\ell}}=B^{\alpha}_{\beta}(\bar{X}^N_{t_{\ell-1}}) \Delta W^{\beta}_{\ell}\\
\Delta Z'^{\alpha\beta}_{2,t_{\ell}}
 &=& B^{\alpha}_{\gamma}(\bar{X}^N_{t_{\ell-1}}) B^{\beta}_{\delta}(\bar{X}^N_{t_{\ell-1}}) \Delta Z^{\gamma\delta}_{2,t_{\ell}} =B^{\alpha}_{\gamma}(\bar{X}^N_{t_{\ell-1}})
B^{\beta}_{\delta}(\bar{X}^N_{t_{\ell-1}}) \Delta \mathbb{W}^{\gamma\delta}_{\ell}\\
\Delta Z^0_{t_{\ell}}&=&\Delta t_{\ell},
\end{eqnarray*}
is a symmetry of the discretization scheme if $(X',Z')$ is also a solution to the discretization scheme $F$. Using Theorem \ref{theorem_symmetry1}, a sufficient
condition for having such a symmetry is that
\begin{equation}\label{equation_milstein_scheme1}
\Phi(F(\Phi^{-1}(x),\Delta t, (B \circ \Phi^{-1})^{-1}(x) \cdot \Delta W, (B \circ \Phi^{-1})^{-1}(x) \otimes (B \circ \Phi^{-1})^{-1}(x) \cdot
\Delta \mathbb{W})=F(x,\Delta t,\Delta W, \Delta \mathbb{W}).
\end{equation}

For a given infinitesimal stochastic transformation $(Y,C)$ the determining equations (using equation \refeqn{equation_determining2}) read
\begin{equation}\label{equation_milstein1}
\begin{array}{c}
Y^j(x)\partial_{x^j}(F^i)(x,\Delta t, \Delta W, \Delta \mathbb{W})-F^j(x,\Delta t, \Delta W, \Delta \mathbb{W}) \partial_{x^j}(Y^i)(x))=\\
=-C^{\alpha}_{\beta}(x)\Delta W^{\beta}
\partial_{\Delta W^{\alpha}}(F^i)(x,\Delta t ,\Delta W, \Delta \mathbb{W})
-C^{\alpha}_{\beta}(x) \Delta\mathbb{W}^{\beta\gamma}
\partial_{\mathbb{W}^{\alpha\gamma}}(F^i)(x,\Delta t ,\Delta W, \Delta \mathbb{W})+\\
-C^{\alpha}_{\beta}(x) \Delta \mathbb{W}^{\beta\gamma}
\partial_{\mathbb{W}^{\gamma\alpha}}(F^i)(x,\Delta t ,\Delta W, \Delta \mathbb{W}).
\end{array}
\end{equation}

We have the following application  of Theorem \ref{theorem_euler2} for the Milstein case.

\begin{theorem}\label{theorem_milstein2}
Suppose that $(Y,C,0)$ is a quasi strong symmetry of the SDE  $(\mu,\sigma)$,  $Y^i$ is linear in $x^i$ and  $C$ is a constant matrix. Then $(Y,C)$ is a
weak symmetry of the Milstein discretization scheme for $(\mu,\sigma)$.
\end{theorem}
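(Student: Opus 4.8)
The plan is to argue exactly as in the proof of Theorem~\ref{theorem_euler2}, only carrying along the extra second--order term of the Milstein map. By Proposition~\ref{proposition_determining} together with Theorem~\ref{theorem_symmetry1} it is enough to show that the infinitesimal determining equations \refeqn{equation_milstein1} for the geometrical SDE $F$ representing the Milstein scheme are satisfied by $(Y,C)$; since, by Theorem~\ref{theorem_discretrization2}, $O(k)$ with action $\Xi_B$ is a gauge symmetry group of the driving semimartingale $Z=(Z^0,Z_1,Z_2)$, the associated weak stochastic transformation sends class--$\mathcal{C}$ solutions of $F$ to class--$\mathcal{C}$ solutions, so $(Y,C)$ is then automatically a weak symmetry. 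The first step is to record the consequences of the hypotheses: $(Y,C,0)$ being a quasi strong symmetry of $(\mu,\sigma)$ means the continuous determining equations \refeqn{equation_BW} hold with $\tau=0$ (and $Y$ is complete, which is anyway automatic since it is linear); and since $Y^i$ is a polynomial of first degree, $\partial_{x^kx^l}(Y^i)=0$, so that $L(Y^i)=\mu^k\partial_{x^k}(Y^i)$ and $Y^i\circ F=Y^i(x)+\partial_{x^k}(Y^i)\,(F^k-x^k)$ with $\partial_{x^k}(Y^i)$ constant.

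The second step is to substitute the explicit map $F^i(x,\Delta t,\Delta W,\Delta\mathbb{W})=x^i+\mu^i(x)\Delta t+\sigma^i_\alpha(x)\Delta W^\alpha+\tfrac12\sigma^j_\alpha(x)\partial_{x^j}(\sigma^i_\beta)(x)\,\Delta\mathbb{W}^{\alpha\beta}$ into \refeqn{equation_milstein1} and to sort the resulting identity according to the independent noise monomials $1$, $\Delta t$, $\Delta W^\alpha$, $\Delta\mathbb{W}^{\alpha\beta}$. The constant, the $\Delta t$ and the $\Delta W^\alpha$ coefficients are handled verbatim as in the Euler computation \refeqn{equation_weak_euler4}--\refeqn{equation_weak_euler5}: the constant terms cancel (both $Y^i\circ F$ and the undifferentiated part of $Y^j\partial_{x^j}(F^i)$ reduce to $Y^i(x)$), the $\Delta t$ coefficient vanishes by the first equation of \refeqn{equation_BW} (using $L(Y^i)=\mu^k\partial_{x^k}(Y^i)$), and the $\Delta W^\alpha$ coefficient vanishes by the second equation of \refeqn{equation_BW}. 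Nothing beyond linearity of $Y$ is needed for these three.

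The genuinely new — and, I expect, the only delicate — step is the coefficient of $\Delta\mathbb{W}^{\alpha\beta}$. After using $\partial_{x^kx^l}(Y^i)=0$, the left-hand side of \refeqn{equation_milstein1} contributes to this coefficient the expression
\[
\tfrac12\left[\,\sigma^j_\alpha\,\partial_{x^j}(\sigma^k_\beta)\,\partial_{x^k}(Y^i)\;-\;Y^j\,\partial_{x^j}\!\left(\sigma^k_\alpha\,\partial_{x^k}(\sigma^i_\beta)\right)\right],
\]
which must be matched against the two $C$--dependent terms produced by the gauge generator acting on the $\Delta\mathbb{W}$--slot of $F$, namely (up to the index bookkeeping of \refeqn{equation_milstein1}) $-\tfrac12\,C^\gamma_\alpha\,\sigma^j_\gamma\,\partial_{x^j}(\sigma^i_\beta)-\tfrac12\,C^\gamma_\beta\,\sigma^j_\alpha\,\partial_{x^j}(\sigma^i_\gamma)$. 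To produce the required identity I would differentiate the second determining equation of \refeqn{equation_BW}, i.e. $Y^k\partial_{x^k}(\sigma^i_\beta)-\sigma^k_\beta\partial_{x^k}(Y^i)+\sigma^i_\gamma C^\gamma_\beta=0$, along the vector field $\sigma^l_\alpha\partial_{x^l}$: here the hypothesis that $C$ is a constant matrix is used crucially, so that $C$ passes through the derivative, while $\partial_{x^lx^k}(Y^i)=0$ kills the second--derivative term $\sigma^l_\alpha\sigma^k_\beta\partial_{x^lx^k}(Y^i)$. Feeding this differentiated relation back in, and then using \refeqn{equation_BW} once more to collapse the leftover factor $\bigl(Y^l\partial_{x^l}(\sigma^j_\alpha)+C^\gamma_\alpha\sigma^j_\gamma-\sigma^l_\alpha\partial_{x^l}(Y^j)\bigr)\partial_{x^j}(\sigma^i_\beta)$, makes everything cancel. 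This is exactly the place where both additional hypotheses ($Y$ linear, $C$ constant) enter, in contrast with Theorem~\ref{theorem_euler2} where $C$ is arbitrary, and I expect it to be the only nontrivial bookkeeping in the argument. Once \refeqn{equation_milstein1} is verified, Theorem~\ref{theorem_symmetry1} yields $E_{T_a}(F)=F$ for the one-parameter group $T_a$ generated by $(Y,C)$, which proves the claim.
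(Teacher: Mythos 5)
Your proposal is correct and follows essentially the same route as the paper: the Euler-type monomials are disposed of exactly as in Theorem \ref{theorem_euler2}, and the $\Delta\mathbb{W}^{\alpha\beta}$ coefficient is handled by differentiating the second determining equation of \refeqn{equation_BW} (with $\tau=0$) along $\sigma^l_\alpha\partial_{x^l}$, using linearity of $Y$ and constancy of $C$, and substituting \refeqn{equation_BW} back in — which is precisely the paper's passage from \refeqn{equation_milstein3} to \refeqn{equation_milstein4} to \refeqn{equation_milstein2}. The only deviations are harmless sign/index bookkeeping, which you flag yourself.
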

\begin{proof}
From Theorem \ref{theorem_euler2}, equation \refeqn{equation_milstein1} and under the hypothesis of the present theorem, we have that $(Y,C)$ is a symmetry of
the Milstein discretization if and only if
\begin{equation}\label{equation_milstein2}
\partial_{x^j}(Y^k)\sigma^i_{\alpha}\partial_{x^i}(\sigma^j_{\beta})\Delta \mathbb{W}^{\alpha\beta}=Y^j
\partial_{x^j}(\sigma^i_{\alpha}\partial_{x^i}(\sigma^k_{\beta}))\Delta \mathbb{W}^{\alpha\beta}+
\sigma^i_{\alpha}\partial_{x^i}(\sigma^j_{\beta})(C^{\alpha}_{\gamma}\Delta \mathbb{W}^{\gamma\beta}+C^{\beta}_{\delta}\Delta \mathbb{W}^{\alpha
\delta}).
\end{equation}
Furthermore $(Y,C,0)$ is a quasi-strong symmetry of the SDE $(\mu,\sigma)$ if and only if
\begin{equation}\label{equation_milstein3}
-Y^i\partial_{x^i}(\sigma^j_{\alpha})+\sigma^i_{\alpha}\partial_{x^i}(Y^j)=C^{\beta}_{\alpha}\sigma^j_{\beta}
\end{equation}
If we derive equation \refeqn{equation_milstein3} with respect to $\sigma^i_{\gamma}\partial_{x^i}$ we obtain
\begin{equation}\label{equation_milstein4}
\begin{array}{c}
-\sigma^i_{\alpha}\partial_{x^i}(Y^j)\partial_{x^j}(\sigma^k_{\beta})-Y^j\sigma^i_{\alpha}\partial_{x^ix^j}(\sigma^k_{\beta})
+\sigma^i_{\alpha}\partial_{x^i}(\sigma^j_{\beta})\partial_{x^j}(Y^k)+\sigma^i_{\alpha}\sigma^j_{\beta}\partial_{x^ix^j}(Y^k)=\\
=\sigma^i_{\alpha}\partial_{x^i}(C^{\gamma}_{\beta})\sigma^k_{\gamma}+\sigma^i_{\alpha}\partial_{x^i}(\sigma^k_{\gamma})C^{\gamma}_{\beta}.
\end{array}
\end{equation}
Using the fact that $Y^i$ is almost linear in $x^i$, and so $\partial_{x^ix^j}(Y^k)=0$, the fact that $C^{\alpha}_{\beta}$ is constant, and so
$\partial_{x^i}(C^{\gamma}_{\beta})=0$, and replacing equation \refeqn{equation_milstein3} in equation \refeqn{equation_milstein4} we obtain the
relation \refeqn{equation_milstein2} which means that $(Y,C)$ is a weak symmetry of the Milstein discretization scheme.
${}\hfill$\end{proof}

\begin{remark}
There is an important difference between Theorem \ref{theorem_euler2} and Theorem \ref{theorem_milstein2}. Indeed Theorem
\ref{theorem_euler2} gives a necessary and sufficient condition in order that a quasi-strong symmetry $(Y,C,0)$ of the SDE $(\mu,\sigma)$ is a weak
symmetry $(Y,C)$ of the Euler discretization scheme, while Theorem \ref{theorem_milstein2} gives only a sufficient condition. Furthermore the
hypotheses of Theorem \ref{theorem_milstein2} request that $C^{\alpha}_{\beta}$ is a constant, while $C^{\alpha}_{\beta}$ can be any function in
Theorem \ref{theorem_euler2}. \\
This last fact can be explained in the following way: the gauge transformation $\Xi_B$ transforms the process
$Z=(Z^{0},Z^{\alpha}_1,Z^{\alpha\beta}_2)$ using an Euler approximation of the usual random rotation and not a Milstein one. The two
approximations of the random rotation coincide only when the rotations $B^{\alpha}_{\beta}$ (or the generator $C^{\alpha}_{\beta}$ of the
rotations) are constants. Finally it is important to note that one cannot use the Milstein approximation for transforming the semimartingales $Z$,
because otherwise the transformation would not preserve the law of the semimartingale $Z$.
\end{remark}

\section*{Acknowledgements}

The first author would like to thank the Department of Mathematics, Universit\`a degli Studi di
Milano for the warm hospitality. This research was supported by IAM and HCM (University of
Bonn) and Gruppo Nazionale Fisica Matematica (GNFM-INdAM). The second author is supported
by the German Research Foundation (DFG) via CRC 1060.

\bibliographystyle{plain}

\bibliography{SymmetriesBib1.bib}

\begin{thebibliography}{10}

\bibitem{Albeverio2017}
Sergio Albeverio, Francesco~C. De~Vecchi, Paola Morando, and Stefania Ugolini.
\newblock Symmetries and invariance properties of stochastic differential
  equations driven by semimartingales with jumps.
\newblock {\em arXiv preprint arXiv:1708.01764}, 2017.

\bibitem{Annals}
Sergio Albeverio, Francesco~C. De~Vecchi, Paola Morando, and Stefania Ugolini.
\newblock Random transformations and invariance of semimartingales on {L}ie
  groups.
\newblock {\em arXiv preprint arXiv:1812.11066}, 2018.

\bibitem{Albeverio2007}
Sergio Albeverio and Maria Gordina.
\newblock L\'evy processes and their subordination in matrix {L}ie groups.
\newblock {\em Bull. Sci. Math.}, 131(8):738--760, 2007.

\bibitem{Anderson2010}
Greg~W. Anderson, Alice Guionnet, and Ofer Zeitouni.
\newblock {\em An introduction to random matrices}, volume 118 of {\em
  Cambridge Studies in Advanced Mathematics}.
\newblock Cambridge University Press, Cambridge, 2010.

\bibitem{Applebaum2004}
David Applebaum.
\newblock {\em L\'evy processes and stochastic calculus}, volume~93 of {\em
  Cambridge Studies in Advanced Mathematics}.
\newblock Cambridge University Press, Cambridge, 2004.

\bibitem{Applebaum1997}
David Applebaum and Serge Cohen.
\newblock Stochastic parallel transport along {L}\'evy flows of
  diffeomorphisms.
\newblock {\em J. Math. Anal. Appl.}, 207(2):496--505, 1997.

\bibitem{Arnold1998}
Ludwig Arnold.
\newblock {\em Random dynamical systems}.
\newblock Springer Monographs in Mathematics. Springer-Verlag, Berlin, 1998.

\bibitem{Babillot1997}
Martine Babillot, Philippe Bougerol, and Laure Elie.
\newblock The random difference equation {$X_n=A_nX_{n-1}+B_n$} in the critical
  case.
\newblock {\em Ann. Probab.}, 25(1):478--493, 1997.

\bibitem{Bain2009}
Alan Bain and Dan Crisan.
\newblock {\em Fundamentals of stochastic filtering}, volume~60 of {\em
  Stochastic Modelling and Applied Probability}.
\newblock Springer, New York, 2009.

\bibitem{Bichteler2002}
Klaus Bichteler.
\newblock {\em Stochastic integration with jumps}, volume~89 of {\em
  Encyclopedia of Mathematics and its Applications}.
\newblock Cambridge University Press, Cambridge, 2002.

\bibitem{Bluman1989}
George~W. Bluman and Sukeyuki Kumei.
\newblock {\em Symmetries and differential equations}, volume~81 of {\em
  Applied Mathematical Sciences}.
\newblock Springer-Verlag, New York, 1989.

\bibitem{Brockwell1991}
Peter~J. Brockwell and Richard~A. Davis.
\newblock {\em Time series: theory and methods}.
\newblock Springer Series in Statistics. Springer-Verlag, New York, second
  edition, 1991.

\bibitem{Winternitz2016}
Rutwig Campoamor-Stursberg, Miguel~A. Rodr{\'{\i}}guez, and Pavel Winternitz.
\newblock Symmetry preserving discretization of ordinary differential
  equations. {L}arge symmetry groups and higher order equations.
\newblock {\em J. Phys. A}, 49(3):035201, 21, 2016.

\bibitem{Cohen1995}
S.~Cohen.
\newblock Some {M}arkov properties of stochastic differential equations with
  jumps.
\newblock In {\em S\'eminaire de {P}robabilit\'es, {XXIX}}, volume 1613 of {\em
  Lecture Notes in Math.}, pages 181--193. Springer, Berlin, 1995.

\bibitem{Cohen1996}
S.~Cohen.
\newblock G\'eom\'etrie diff\'erentielle stochastique avec sauts. {I}.
\newblock {\em Stochastics Stochastics Rep.}, 56(3-4):179--203, 1996.

\bibitem{Cohen1996(2)}
S.~Cohen.
\newblock G\'eom\'etrie diff\'erentielle stochastique avec sauts. {II}.
  {D}iscr\'etisation et applications des {EDS} avec sauts.
\newblock {\em Stochastics Stochastics Rep.}, 56(3-4):205--225, 1996.

\bibitem{DeLara1991}
M.~Cohen~de Lara.
\newblock A note on the symmetry group and perturbation algebra of a parabolic
  partial differential equation.
\newblock {\em J. Math. Phys.}, 32(6):1445--1449, 1991.

\bibitem{DeLara1995}
M.~Cohen~de Lara.
\newblock Geometric and symmetry properties of a nondegenerate diffusion
  process.
\newblock {\em Ann. Probab.}, 23(4):1557--1604, 1995.

\bibitem{DeLara1997(1)}
M.~Cohen~de Lara.
\newblock Finite-dimensional filters. {I}. {T}he {W}ei-{N}orman technique.
\newblock {\em SIAM J. Control Optim.}, 35(3):980--1001, 1997.

\bibitem{DeLara1997(2)}
M.~Cohen~de Lara.
\newblock Finite-dimensional filters. {II}. {I}nvariance group techniques.
\newblock {\em SIAM J. Control Optim.}, 35(3):1002--1029, 1997.

\bibitem{Craddock2009}
Mark Craddock, Otto Konstandatos, and Kelly~A. Lennox.
\newblock Some recent developments in the theory of {L}ie group symmetries for
  {PDE}s.
\newblock In {\em Advances in mathematics research. {V}ol. 9}, volume~9 of {\em
  Adv. Math. Res.}, pages 1--40. Nova Sci. Publ., New York, 2009.

\bibitem{Craddock2007}
Mark Craddock and Kelly~A. Lennox.
\newblock Lie group symmetries as integral transforms of fundamental solutions.
\newblock {\em J. Differential Equations}, 232(2):652--674, 2007.

\bibitem{Craddock2012}
Mark Craddock and Kelly~A. Lennox.
\newblock Lie symmetry methods for multi-dimensional parabolic {PDE}s and
  diffusions.
\newblock {\em J. Differential Equations}, 252(1):56--90, 2012.

\bibitem{Cruzeiro2016}
Ana~Bela Cruzeiro, Darryl~D Holm, and Tudor~S Ratiu.
\newblock Momentum maps and stochastic clebsch action principles.
\newblock {\em arXiv preprint arXiv:1604.04554}, 2016.

\bibitem{Cuchiero2011}
Christa Cuchiero, Damir Filipovi{\'c}, Eberhard Mayerhofer, and Josef
  Teichmann.
\newblock Affine processes on positive semidefinite matrices.
\newblock {\em Ann. Appl. Probab.}, 21(2):397--463, 2011.

\bibitem{DM2016}
Francesco~C. De~Vecchi and Paola Morando.
\newblock The geometry of differential constraints for a class of evolution
  pdes.
\newblock {\em arXiv preprint arXiv:1607.08014}, 2016.

\bibitem{DMU2}
Francesco~C. De~Vecchi, Paola Morando, and Stefania Ugolini.
\newblock Reduction and reconstruction of stochastic differential equations via
  symmetries.
\newblock {\em J. Math. Phys.}, 57(12):123508, 22, 2016.

\bibitem{DMU1}
Francesco~C. De~Vecchi, Paola Morando, and Stefania Ugolini.
\newblock Symmetries of stochastic differential equations: {A} geometric
  approach.
\newblock {\em J. Math. Phys.}, 57(6):063504, 17, 2016.

\bibitem{DeVecchi2017}
Francesco~C. De~Vecchi and Stefania Ugolini.
\newblock A symmetry-adapted numerical scheme for sdes.
\newblock {\em arXiv preprint arXiv:1704.04167}, 2017.

\bibitem{Diaconis1999}
Persi Diaconis and David Freedman.
\newblock Iterated random functions.
\newblock {\em SIAM Rev.}, 41(1):45--76, 1999.

\bibitem{Dorodnitsyn2011}
Vladimir Dorodnitsyn.
\newblock {\em Applications of {L}ie groups to difference equations}, volume~8
  of {\em Differential and Integral Equations and Their Applications}.
\newblock CRC Press, Boca Raton, FL, 2011.

\bibitem{Filipovic2003}
D.~Duffie, D.~Filipovi{\'c}, and W.~Schachermayer.
\newblock Affine processes and applications in finance.
\newblock {\em Ann. Appl. Probab.}, 13(3):984--1053, 2003.

\bibitem{Elworthy1999}
K.~D. Elworthy, Y.~Le~Jan, and Xue-Mei Li.
\newblock {\em On the geometry of diffusion operators and stochastic flows},
  volume 1720 of {\em Lecture Notes in Mathematics}.
\newblock Springer-Verlag, Berlin, 1999.

\bibitem{Elworthy2010}
K.~David Elworthy, Yves Le~Jan, and Xue-Mei Li.
\newblock {\em The geometry of filtering}.
\newblock Frontiers in Mathematics. Birkh\"auser Verlag, Basel, 2010.

\bibitem{Fredericks2007}
E.~Fredericks and F.~M. Mahomed.
\newblock Symmetries of first-order stochastic ordinary differential equations
  revisited.
\newblock {\em Math. Methods Appl. Sci.}, 30(16):2013--2025, 2007.

\bibitem{Gaeta1994}
Giuseppe Gaeta.
\newblock {\em Nonlinear symmetries and nonlinear equations}, volume 299 of
  {\em Mathematics and its Applications}.
\newblock Kluwer Academic Publishers Group, Dordrecht, 1994.

\bibitem{Gaeta2000}
Giuseppe Gaeta.
\newblock Lie-point symmetries and stochastic differential equations. {II}.
\newblock {\em J. Phys. A}, 33(27):4883--4902, 2000.

\bibitem{Gaeta2017}
Giuseppe Gaeta.
\newblock Symmetry of stochastic non-variational differential equations.
\newblock {\em Phys. Rep.}, 686:1--62, 2017.

\bibitem{Gaeta1999}
Giuseppe Gaeta and Niurka~Rodr{\'{\i}}guez Quintero.
\newblock Lie-point symmetries and stochastic differential equations.
\newblock {\em J. Phys. A}, 32(48):8485--8505, 1999.

\bibitem{Glover1991}
Joseph Glover.
\newblock Symmetry groups and translation invariant representations of {M}arkov
  processes.
\newblock {\em Ann. Probab.}, 19(2):562--586, 1991.

\bibitem{Glover1998}
Joseph Glover.
\newblock Symmetry groups in {M}arkov processes and potential theory.
\newblock In {\em Functional analysis. {V}, {V}ol. {I} ({D}ubrovnik, 1997)},
  volume~44 of {\em Various Publ. Ser. (Aarhus)}, pages 19--33. Univ. Aarhus,
  Aarhus, 1998.

\bibitem{Glover1990}
Joseph Glover and Joanna Mitro.
\newblock Symmetries and functions of {M}arkov processes.
\newblock {\em Ann. Probab.}, 18(2):655--668, 1990.

\bibitem{Holm2015}
Darryl~D. Holm.
\newblock Variational principles for stochastic fluid dynamics.
\newblock {\em Proc. A.}, 471(2176):20140963, 19, 2015.

\bibitem{Hunt1956}
Gilbert~Agnew Hunt.
\newblock Semi-groups of measures on lie groups.
\newblock {\em Transactions of the American Mathematical Society},
  81(2):264--293, 1956.

\bibitem{Jacod1979}
Jean Jacod.
\newblock {\em Calcul stochastique et probl\`emes de martingales}, volume 714
  of {\em Lecture Notes in Mathematics}.
\newblock Springer, Berlin, 1979.

\bibitem{Jacod2003}
Jean Jacod and Albert~N. Shiryaev.
\newblock {\em Limit theorems for stochastic processes}, volume 288 of {\em
  Grundlehren der Mathematischen Wissenschaften [Fundamental Principles of
  Mathematical Sciences]}.
\newblock Springer-Verlag, Berlin, second edition, 2003.

\bibitem{Kallenberg2005}
Olav Kallenberg.
\newblock {\em Probabilistic symmetries and invariance principles}.
\newblock Probability and its Applications (New York). Springer, New York,
  2005.

\bibitem{Kesten1973}
Harry Kesten.
\newblock Random difference equations and renewal theory for products of random
  matrices.
\newblock {\em Acta Math.}, 131:207--248, 1973.

\bibitem{Kloeden1992}
Peter~E. Kloeden and Eckhard Platen.
\newblock {\em Numerical solution of stochastic differential equations},
  volume~23 of {\em Applications of Mathematics (New York)}.
\newblock Springer-Verlag, Berlin, 1992.

\bibitem{Kozlov2010(1)}
Roman Kozlov.
\newblock The group classification of a scalar stochastic differential
  equation.
\newblock {\em J. Phys. A}, 43(5):055202, 13, 2010.

\bibitem{Kozlov2010(2)}
Roman Kozlov.
\newblock Symmetries of systems of stochastic differential equations with
  diffusion matrices of full rank.
\newblock {\em J. Phys. A}, 43(24):245201, 16, 2010.

\bibitem{Kunita2004}
Hiroshi Kunita.
\newblock Stochastic differential equations based on {L}\'evy processes and
  stochastic flows of diffeomorphisms.
\newblock In {\em Real and stochastic analysis}, Trends Math., pages 305--373.
  Birkh\"auser Boston, Boston, MA, 2004.

\bibitem{Protter1995}
Thomas~G. Kurtz, {\'E}tienne Pardoux, and Philip Protter.
\newblock Stratonovich stochastic differential equations driven by general
  semimartingales.
\newblock {\em Ann. Inst. H. Poincar\'e Probab. Statist.}, 31(2):351--377,
  1995.

\bibitem{Cami2009}
Joan-Andreu L{\'a}zaro-Cam{\'{\i}} and Juan-Pablo Ortega.
\newblock Reduction, reconstruction, and skew-product decomposition of
  symmetric stochastic differential equations.
\newblock {\em Stoch. Dyn.}, 9(1):1--46, 2009.

\bibitem{Levi2011}
Decio Levi, Peter Olver, Zora Thomova, and Pavel Winternitz, editors.
\newblock {\em Symmetries and integrability of difference equations}, volume
  381 of {\em London Mathematical Society Lecture Note Series}.
\newblock Cambridge University Press, Cambridge, 2011.
\newblock Lectures from the Summer School (S{\'e}minaire de M{\'a}thematiques
  Sup{\'e}rieures) held at the Universit{\'e} de Montr{\'e}al, Montr{\'e}al,
  QC, June 8--21, 2008.

\bibitem{Levi2006}
Decio Levi and Pavel Winternitz.
\newblock Continuous symmetries of difference equations.
\newblock {\em J. Phys. A}, 39(2):R1--R63, 2006.

\bibitem{Liao1992}
Ming Liao.
\newblock Symmetry groups of {M}arkov processes.
\newblock {\em Ann. Probab.}, 20(2):563--578, 1992.

\bibitem{Liao2004}
Ming Liao.
\newblock {\em L\'evy processes in {L}ie groups}, volume 162 of {\em Cambridge
  Tracts in Mathematics}.
\newblock Cambridge University Press, Cambridge, 2004.

\bibitem{Liao2009}
Ming Liao.
\newblock Markov processes invariant under a {L}ie group action.
\newblock {\em Stochastic Process. Appl.}, 119(4):1357--1367, 2009.

\bibitem{Marcus1978}
Steven~I. Marcus.
\newblock Modeling and analysis of stochastic differential equations driven by
  point processes.
\newblock {\em IEEE Trans. Inform. Theory}, 24(2):164--172, 1978.

\bibitem{Marcus1981}
Steven~I. Marcus.
\newblock Modeling and approximation of stochastic differential equations
  driven by semimartingales.
\newblock {\em Stochastics}, 4(3):223--245, 1980/81.

\bibitem{Mehta2004}
Madan~Lal Mehta.
\newblock {\em Random matrices}, volume 142 of {\em Pure and Applied
  Mathematics (Amsterdam)}.
\newblock Elsevier/Academic Press, Amsterdam, third edition, 2004.

\bibitem{Olver1993}
Peter~J. Olver.
\newblock {\em Applications of {L}ie groups to differential equations}, volume
  107 of {\em Graduate Texts in Mathematics}.
\newblock Springer-Verlag, New York, second edition, 1993.

\bibitem{Privault2012}
Nicolas Privault.
\newblock Invariance of {P}oisson measures under random transformations.
\newblock {\em Ann. Inst. Henri Poincar\'e Probab. Stat.}, 48(4):947--972,
  2012.

\bibitem{Protter1990}
Philip Protter.
\newblock {\em Stochastic integration and differential equations}, volume~21 of
  {\em Applications of Mathematics (New York)}.
\newblock Springer-Verlag, Berlin, 1990.
\newblock A new approach.

\bibitem{Sato1999}
Ken-iti Sato.
\newblock {\em L\'evy processes and infinitely divisible distributions},
  volume~68 of {\em Cambridge Studies in Advanced Mathematics}.
\newblock Cambridge University Press, Cambridge, 1999.
\newblock Translated from the 1990 Japanese original, Revised by the author.

\bibitem{Schreiber2012}
Sebastian~J. Schreiber.
\newblock Persistence for stochastic difference equations: a mini-review.
\newblock {\em J. Difference Equ. Appl.}, 18(8):1381--1403, 2012.

\bibitem{Shumway2006}
Robert~H. Shumway and David~S. Stoffer.
\newblock {\em Time series analysis and its applications}.
\newblock Springer Texts in Statistics. Springer, New York, second edition,
  2006.
\newblock With R examples.

\bibitem{Srihirun2006}
B.~Srihirun, S.~V. Meleshko, and E.~Schulz.
\newblock On the definition of an admitted {L}ie group for stochastic
  differential equations with multi-{B}rownian motion.
\newblock {\em J. Phys. A}, 39(45):13951--13966, 2006.

\bibitem{Stephani1989}
Hans Stephani.
\newblock {\em Differential equations}.
\newblock Cambridge University Press, Cambridge, 1989.
\newblock Their solution using symmetries.

\bibitem{Unal2004}
Gazanfer {\"U}nal.
\newblock Symmetries of {I}t\^o and {S}tratonovich dynamical systems and their
  conserved quantities.
\newblock {\em Nonlinear Dynam.}, 32(4):417--426, 2003.

\bibitem{Onishchik}
\`E.~B. Vinberg, editor.
\newblock {\em Lie groups and {L}ie algebras, {III}}, volume~41 of {\em
  Encyclopaedia of Mathematical Sciences}.
\newblock Springer-Verlag, Berlin, 1994.
\newblock Structure of Lie groups and Lie algebras, A translation of {{\i}t
  Current problems in mathematics. Fundamental directions. Vol. 41} (Russian),
  Akad. Nauk SSSR, Vsesoyuz. Inst. Nauchn. i Tekhn. Inform., Moscow, 1990 [
  MR1056485 (91b:22001)], Translation by V. Minachin [V. V. Minakhin],
  Translation edited by A. L. Onishchik and \`E. B. Vinberg.

\bibitem{Zambrini2015}
Jean-Claude Zambrini.
\newblock The research program of stochastic deformation (with a view toward
  geometric mechanics).
\newblock In {\em Stochastic analysis: a series of lectures}, volume~68 of {\em
  Progr. Probab.}, pages 359--393. Birkh\"auser/Springer, Basel, 2015.

\end{thebibliography}

\end{document}